\newtheorem{maintheorem}{Theorem}
\newcommand{\cmt}{\begin{maintheorem}}
\newcommand{\fmt}{\end{maintheorem}}
\newtheorem{maincorollary}[maintheorem]{Corollary}
\newcommand{\cmc}{\begin{maincorollary}}
\newcommand{\fmc}{\end{maincorollary}}
\newtheorem{T}{Theorem}[section]
\newcommand{\cte}{\begin{T}}
\newcommand{\fte}{\end{T}}
\newtheorem{Corollary}[T]{Corollary}
\newcommand{\cco}{\begin{Corollary}}
\newcommand{\fco}{\end{Corollary}}
\newtheorem{Proposition}[T]{Proposition}
\newcommand{\cpr}{\begin{Proposition}}
\newcommand{\fpr}{\end{Proposition}}
\newtheorem{Lemma}[T]{Lemma}
\newcommand{\cle}{\begin{Lemma}}
\newcommand{\fle}{\end{Lemma}}
\theoremstyle{remark}
\newtheorem{Example}[T]{Example}
\newcommand{\cex}{\begin{Example}}
\newcommand{\fex}{\end{Example}}
\newtheorem{Remark}[T]{Remark}
\newcommand{\cre}{\begin{Remark}}
\newcommand{\fre}{\end{Remark}}
\newtheorem{Definition}[T]{Definition}
\newcommand{\cde}{\begin{Definition}}
\newcommand{\fde}{\end{Definition}}
\newcommand{\mcup}{\mbox{$\bigcup$}}
\newcommand{\ep} {\epsilon}
\renewcommand{\th} {\theta}
\newcommand{\f}{\varphi}
\newcommand{\vphi}{\varphi}
\newcommand{\w} {\omega} 
\def \RR {{\mathbb R}}
\def \ZZ {{\mathbb Z}}
\def \NN {{\mathbb N}}
\def \TT {{\mathbb T}}
\newcommand{\ra}{\rightarrow}
\newcommand{\x}{\times}
 \def \cl {{\mathcal L}}
    \def \cu {{\mathcal U}}
\newcommand{\dem}{\begin{proof}}
\newcommand{\cqd}{\end{proof}}
\newcommand{\beq}{\begin{equation}}
\newcommand{\eeq}{\end{equation}}
\newcommand{\const}{\operatorname{const}}
\newcommand{\Leb}{m}
\newcommand{\leb}{m}
\newcommand{\diam}{\operatorname{diam}}
\newcommand{\length}{\operatorname{length}}
\newcommand{\dist}{\operatorname{dist}}
\newcommand{\supp}{\operatorname{supp}}
\newcommand{\id}{\operatorname{id}}
\newcommand{\qand}{\quad\text{and}\quad}
\newcommand{\cc}{{\mathcal C}}
\newcommand{\cf}{{\mathcal F}}
\newcommand{\cp}{{\mathcal P}}
\def \cad {{\mathcal D}}
\title[Strong stochastic stability]{Strong stochastic stability
for non-uniformly expanding maps}
\author{Jos\'e F. Alves}
\address{CMUP\\
Rua do Campo Alegre 687, 4169-007 Porto, Portugal}
\email{jfalves@fc.up.pt}
\urladdr{http://www.fc.up.pt/cmup/jfalves}
\author{Helder Vilarinho}
\address{Universidade da Beira Interior\\
Rua Marquês d'Ávila e Bolama, 6200-001 Covilhã, Portugal}
\email{vilarinho@mat.ubi.pt}
\urladdr{http://www.mat.ubi.pt/~hsoares}
\date{\today}
\thanks{Work partially supported by FCT through CMUP. The second author was also partially supported by the FCT grant SFRH/BD/24353/2005.}
\subjclass{37A50, 37D25, 60G52}
\keywords{Non-uniform  expansion, SRB measure, stationary measure, stochastic stability}
\begin{document}

\begin{abstract}
We consider random perturbations of discrete-time  dynamical systems.   We give sufficient conditions for the stochastic stability of certain classes of maps, in a strong sense.
This improves the main result in \cite{AAr03}, where it was proved the convergence of the stationary measures of the random process to the SRB
measure of the initial system in the weak$^*$ topology.
Here, under slightly weaker assumptions on the random perturbations, we obtain a stronger version of stochastic stability: convergence of the densities of the stationary measures to the density of the SRB measure of the unperturbed system in the $L^1$-norm. As an application of our results we obtain strong stochastic stability for two
classes of non-uniformly expanding maps. The first one is an open class
of local diffeomorphisms
introduced in \cite{ABV00} and the second one is the class of {\em Viana maps}.

\end{abstract}

\setcounter{tocdepth}{1}

\maketitle

\tableofcontents
\section{Introduction}

Two major goals of Dynamical Systems Theory are: to study the asymptotic
behavior
of typical orbits as time goes to infinity; and to understand how
stable that behavior is, i.e. how the behavior changes when the system is slightly
modified, or it is exposed to perturbations during time
evolution. Despite the deterministic formulation of dynamical systems, it is
easy to find examples whose evolution law is extremely simple
and whose dynamics has a high level of complexity and sensitivity to
perturbations.
%The word {\em chaos} arises quite naturally associated to this phenomena and has
%become a universal link among different areas of Mathematics, or between this
%and many other sciences.
This work concerns stability of systems, in a sense that we shall
precise later, in a broad class of discrete-time dynamical systems --
{\em non-uniformly expanding maps} -- when some random noise is introduced
in
the {\em deterministic} dynamics.

%Due to the chaotic behaviour on the time evolution  of many
%dynamical
%systems, there are several ways of facing the study of Dynamical Systems in
%general.
An well-succeeded
approach to the study of dynamical systems with complex behavior
is given by Ergodic Theory, which aims at  probabilistic description of orbits
in a measurable phase space. The existence of an invariant measure for a given
dynamics is an important fact in this context, specially if we recall Birkhoff's
ergodic
theorem, which describes time averages of observable phenomena for typical
points with respect to that measure.
However, it may happen that an invariant measure lacks of physical meaning.
\emph{Sinai-Ruelle-Bowen (SRB) measures} play a
particularly important role in this context, since they provide information
about the statistics of orbits for a \emph{large} set of initial states.
These are invariant measures which are somewhat compatible
with the reference volume measure,
when this is not preserved. For some classes of systems they can be obtained as ergodic invariant measures which are absolutely continuous with respect to the volume measure.
SRB
measures were
introduced in the 70's by  Sinai~\cite{Si72}, Ruelle~\cite{R76} and Bowen~\cite{BowR75,Bow75} for Anosov and Axiom A attractors, both in discrete and
continuous time systems. See also \cite{KS69} for uniformly expanding maps. The definition
of SRB measures has known several
formulations,  essentiality motivated by the development of the theory of
Dynamical Systems and the appearance of new examples and subjects of interest,
causing even some ambiguity on definitions in different
contemporary works. See e.g. \cite{Y02} for a compilation of related
results and historical background, and references therein.
The classes of systems studied by Sinai, Ruelle and Bowen, exhibit uniform
expansion/contraction behavior in invariant sub-bundles of the tangent bundle of
a Riemannian manifold, and statistical
properties of dynamical system with this properties were systematically
addressed
in subsequent work of many different authors. %; see
%\cite{Bow70,Bow75,BowR75,KS69,R76,Si68,Si72} for initials developments.
Systems
exhibiting expansion only in asymptotic terms have been considered in
\cite{J81},
where it was established the existence of physical measures for many quadratic
transformations of
the interval; see also \cite{CE80,BeC85,BeY92}. Related to \cite{BeC85} is the
work
\cite{BeC91} for
Hénon maps exhibiting strange attractors. Results for multidimensional
non-uniformly expanding systems appear
in \cite{V97,A00}, and motivated by these results \cite{ABV00} drawn general
conclusions for systems exhibiting non-uniformly
expanding behavior.

The introduction of random perturbations in  dynamical systems
has been addressed in
several works with slightly different means. One of the possible approaches is
 to consider
 at each
iterate a map $f_t$ close to an
original  one $f$, chosen independently
according to some probabilistic law $\theta_\ep$, where $\ep>0$ is the noise
level (for instance, in an $\epsilon$
neighborhood of the original map).
We say that $\mu_\ep$ is a {\em stationary
measure} if
\begin{equation*}
 \int\int\varphi(f_t(x))d\mu_\ep(x)d\theta_\ep(t)=\int\varphi d\mu_\ep
,
\end{equation*}
for every continuous function $\varphi:M\to \RR$.
We say that $\mu_\ep$ is a {\em physical measure} if for a set with positive
Lebesgue measure of initial states $x\in M$ we have
\begin{equation*}
\lim_{n\to+\infty}\frac1n\sum_{j=0}^{n-1}
\varphi((f_{t_{j-1}}\circ\dots\circ f_{t_0})(x))=\int\varphi
d\mu_\ep ,\end{equation*} for every continuous $\varphi:M\to\RR$ and almost
all sequence $(t_0,t_1,\ldots)$ with respect to the product measure
$\theta_\ep^\NN$.
Physical measures for random perturbations
play an equivalent role to that of SRB measures in the deterministic context.
In
order to distinguish them in the deterministic and random
perturbation contexts, we shall refer to
{physical measures} only in the random perturbation setting and to SRB measures
in the deterministic setting.

Stochastic stability is a rather vague
notion, depending on the nature of the systems under consideration, but it tries to reflect that the introduction of small random noise
affects just slightly the statistical description of the dynamical system.  We call a
system   {\em stochastically
stable}
if the stationary physical measures converge in the weak$^*$ topology to some
SRB measure, as $\ep$ goes to zero, and  {\em strongly
stochastically
stable} if the
convergence is with respect to the densities (if they exist)  in
the $L^1$-norm.
We can also formulate random perturbations and stochastic stability in terms of Markov chains.   We refer to \cite{Ki86,Ki88} for a background and treatment of the
topic. For stochastic stability results see  \cite{Y86,BaY93,BaKS96,Ba97} for uniformly hyperbolic systems,
\cite{KatKi86,BeY92,BaV96,Me00} for  non-uniformly expanding interval
maps, \cite{BeV06} for Hénon-like maps and \cite{AAV07} for partially hyperbolic attractors. For related topics see  e.g.
\cite{CoY05} for an analysis of SRB
measures as zero-noise limits, and \cite{Ar00} for an important contribution to
the
stochastic part of a conjecture by
Palis~\cite{Pa00}.

Stochastic
stability was established in \cite{AAr03} for a general class of  multidimensional non-uniformly
expanding maps  in
the weak sense. The main goal of the present work is to improve that
result in \cite{AAr03} to
strong stochastic stability, and this actually happens to hold in a more general framework.
In particular, no nondegeneracy conditions  as in \cite[Section~3]{AAr03} are imposed.
%Important results of stochastic stability for systems with
%non-uniform expansion were also obtained in \cite{Y86} for hyperbolic
%attractors and \cite{BeY92} for certain
%one-dimensional maps.
Our main result is stated in Theorem~\ref{thA} and is formulated in a way that
enables us to use the result in several situations and examples,
and can be a useful tool in the analysis of stochastic properties of dynamical
systems with non-uniform expanding behavior.

\subsubsection*{Overview} This work is organized in the following way. In the remaining of this Introduction we present formally the main
definitions and results on the strong stochastic stability for
non-uniformly expanding maps,  allowing the
presence of  critical set.  Sections~\ref{se.measures} and \ref{se.stability}
are devoted to prove Theorem~\ref{thA}.
In Section~\ref{se.measures} we follow initially
some ideas from \cite{BaBeM02}  on a random version of Young towers to
construct an absolutely continuous stationary probability measure and  prove that
this stationary measure is ergodic
and therefore unique. This approach is based on Theorem~\ref{random Markov structure} where we obtain random induced schemes for the stochastic perturbations under consideration. This theorem is a stochastic version of the main result in \cite{A04}.
The proof of Theorem~\ref{random Markov structure} is left to Section~\ref{se.monitoring} and it extends ideas from \cite{ABV00,ALP05} on deterministic
  non-uniformly expanding maps to the present situation. It also uses previous material from  \cite{AAr03} which, on  its own, extends results from \cite{ABV00} to
the random
situation. In Section~\ref{se.stability} we prove the strong stochastic stability, inspired in the approach  of
\cite{AV02}, where strong statistical stability is achieved.
In Section~\ref{se.examples}
we present applications of our main result to two classes of examples that fit our assumptions and for which we
obtain the strong stochastic stability. The first example is an open
class of local diffeomorphisms
introduced in \cite{ABV00}, and the second one {\em Viana maps}, an open class of maps with critical sets introduced in \cite{V97}.
This improves the weaker form of stochastic stability  proved in \cite{AAr03} for both examples.

\subsection{Non-uniformly expanding maps}\label{ch.NUE}
 Let $M$ be a compact boundaryless manifold endowed with a normalized
volume measure $m$ that we call Lebesgue measure. Let $f\colon M\to M$ be a $C^2$ local diffeomorphism in the whole
manifold  except, possibly, in a set $\cc\subset M$ of critical/singular points. This
set $\cc$ may be taken as a set of points where the derivative
of $f$ is not an isomorphism or simply does not exist.

 \cde\label{d.nd} We say that a critical/singular set $ \cc $ is
{\em non-degenerate} if it has zero Lebesgue measure
and the following conditions hold:
\begin{enumerate}
  \item There are constants $B>1$ and $\beta>0$ such that for every $x\in
 M\setminus\cc$
\begin{enumerate}
 \item[(c$_1$)]
\quad $\displaystyle{\frac{1}{B}\dist(x,\cc)^{\beta}\leq \frac
{\|Df(x)v\|}{\|v\|}\leq B\dist(x,\cc)^{-\beta}}$ for all $v\in T_x
M$.
\end{enumerate}
  \item For every
$x,y\in M\setminus \cc$ with $\dist(x,y)<\dist(x,\cc)/2$ we have
\begin{enumerate}
\item[(c$_2$)] \quad $\displaystyle{\left|\log\|Df(x)^{-1}\|-
\log\|Df(y)^{-1}\|\:\right|\leq
\frac{B}{\dist(x,\cc)^{\beta}}\dist(x,y)}$;
 \item[(c$_3$)]
\quad $\displaystyle{\left|\log|\det Df(x)|- \log|\det
Df(y)|\:\right|\leq \frac{B}{\dist(x,\cc)^{\beta}}\dist(x,y)}$.
 \end{enumerate}
\end{enumerate}

 \fde
The first condition
 says that
 $f$  {\em behaves like a power of the distance}
 to $ \cc $ and the last two conditions say that the functions $  \log|\det Df| $ and $ \log \|Df^{-1}\|
$ are \emph{locally Lipschitz} in $ M \setminus \cc$,
with the Lipschitz constant depending on the distance to $\cc$.
 Given $\delta>0$ and $x\in M\setminus\cc$ we define the
{\em $\delta$-truncated distance\/} from $x$ to $\cc$ as
\begin{equation*}
\label{e.truncate} \dist_\delta(x,\cc)=\left\{
\begin{array}{ll} 1, & \text{if } \dist(x,\cc)\ge\delta \\
                  \dist(x,\cc), & \text{otherwise}.
\end{array}\right.
\end{equation*}
%Note that this is not really a distance function:
%$\dist(x,y)+\dist_\delta(y,\cc)$ may be smaller than
%$\dist_\delta(x,\cc)$.
\
\cde\label{def.NUE} Let $f\colon M\to M$ be a $C^2$ local
diffeomorphism outside a non-degenerate critical set $\cc$. We say
that $f$ is {\em non-uniformly expanding on a
set}\index{non-uniformly expanding!on a set} $H\subset M$ if the
following conditions hold:
\begin{enumerate}
\item there is ${a_0}>0$ such that for each $x\in H$
 \begin{equation}\label{NUE}%\tag{NUE}
    \limsup_{n\to{+\infty}}\frac{1}{n}\sum_{j=0}^{n-1}
    \log \|Df({f^{j}(x))}^{-1}\|<-{a_0};
\end{equation}
\item  for every ${b_0}>0$ there exists $\delta>0$ such that
for each $x\in H$
\begin{equation} \label{e.faraway1}%\tag{$ ** $}
    \limsup_{n\to+\infty}
\frac{1}{n} \sum_{j=0}^{n-1}-\log \dist_\delta(f^j(x),\cc)
<{b_0}.
\end{equation}
\end{enumerate}
We will refer to the second condition above by saying that the
orbits of points in  $H$ have {\em slow recurrence}\index{slow
recurrence} to  $\cc$.
 The case
 $\cc=\emptyset$ may  also be considered, and in
 such case the definition reduces to the
 first condition. A map is said to be
{\em non-uniformly expanding}\index{expanding!non-uniformly}  if it
is non-uniformly expanding on a set of full Lebesgue measure.
 \fde

\subsection{Random perturbations}\label{ch.random}
 The idea of random perturbations
is to
replace the original deterministic obits
by {\em random orbits} generated by an independent and
identically distributed random choice of
map at each iteration. To be more precise, given a dynamical system $f:M\to M$, consider a  family $\mathcal F$ of maps from $M$ to $M$ endowed with some metric, a metric space $T$ and a continuous map
$$
 \begin{array}{rccl}
 \Phi
:& T &\longrightarrow&  \cf\\
 & t &\longmapsto & \Phi(t)=f_t
 \end{array}
 $$
such that $f=f_{t^*}$ for some  $t^*\in T$. Moreover, let $(\theta_\ep)_{\ep>0}$ to be a family of Borel probability measures in $T$. We consider the product space $T^\NN$ and product probability measure $\theta_\ep^\NN$   on $T^\NN$. We will refer to such a pair $\{\Phi,(\th_\ep)_{\ep>0}\}$ as a {\em random
perturbation} of $f$.

 For a {\em realization}
$\omega=(\omega_0,\omega_1,\ldots)\in T^{\NN}$ and $n\ge0$ we define
\begin{equation*}
f_{\omega}^n(x)=\left\{\begin{array}{ll}
x& \text{ if }\quad  n=0,\\
 (f_{\omega_{n-1}}\circ\dots\circ f_{\omega_1}\circ f_{\omega_0})(x)& \text{ if
}\quad  n>0.\\
\end{array}\right.
\end{equation*}
Given $x\in M$ and  $\w\in T^\NN$ we call the sequence
$\big(f_\omega^n(x)\big)_{n\in\NN}$ a {\em random
orbit}\index{random orbit} of $x$. %Note that
%$\w^*=(t^*,t^*,t^*,\ldots)$ gives rise to the unperturbed
%deterministic orbit of $x$ given by the
%original dynamics
%$f$.

\cde
A measure $\mu_\ep$ on the Borel sets of $M$ is called a
{\em stationary measure} for $\{\Phi,(\th_\ep)_{\ep>0}\}$ if
$$\iint(\varphi\circ f_t)(x)
\,d\mu_\ep(x)d\theta_\ep(t)=\int\varphi\,d\mu_\ep,$$
for all $\varphi:M\to\RR$ continuous.\fde

If there is no confusion we will refer such a measure $\mu_\ep$ as a
stationary measure for
$f$.

\subsection{Non-uniform expansion on random orbits}
Consider a random perturbation $\{\Phi,(\theta_\ep)_{\ep>0}\}$ of a non-uniformly expanding map $f$ such that, with respect the metric on $\cf$,
$$\supp(\theta_\ep)\rightarrow \{t^*\},\quad\text{as}\quad
\ep\to 0.$$ Due to the presence of the critical set, we will restrict the class of
perturbations we are going to
consider for maps with critical sets: we take all the maps $f_t$
with the same critical set $\cc$ by imposing that
 \begin{equation}\label{e.perturbation}
 Df_{t}(x)=Df(x), \quad\mbox{for every $x\in M\setminus\cc$ and $t\in T$}.
 \end{equation}
 This may be implemented, for instance, in parallelizable manifolds (with an
additive
group structure, e.g. tori $\mathbb T^d$ (or cylinders $\TT^{d-k}\x
\RR^k$), by considering     $T=\{t\in\RR^d: \|t\|\leq\ep_0\}$
for some $\ep_0>0$, and
taking $f_t=f+t$, that is, adding at each step a random noise to the
unperturbed
dynamics.

\cde\label{defNUEOA}  We say that $f$ is
\emph{non-uniformly expanding on random orbits}  if the following conditions hold, at
least for small $\ep>0$:
\begin{enumerate}
  \item  there is ${a_0}>0$ such that for
$\th_\ep^\NN\times m$ almost every $(\omega,x)\in T^\NN\times M$
 \begin{equation} \label{NUEOA}
\limsup_{n\ra +\infty}\frac{1}{n}
\sum_{j=0}^{n-1}\log\|Df(f_{\omega}^j(x))^{-1}\|< -{a_0};
 \end{equation}
 \item  given any small ${b_0} >0$ there is $\delta
>0$ such that for $\th_\ep^\NN\times m$ almost every $(\omega,x)\in T^\NN\times
M$
\begin{equation}
 \label{RecLentOA}
\limsup_{n\ra +\infty}\frac{1}{n}
\sum_{j=0}^{n-1}-\log\dist_\delta(f^j_{\omega}(x), \cc)< {b_0}.
 \end{equation}
\end{enumerate}
 \fde
 When $\cc=\emptyset$
we naturally disregard the second condition in the definition
above. In this case
we can remove assumption
\eqref{e.perturbation} and replace \eqref{NUEOA} by the following condition:
there is
$a_0>0$ such that for $\theta_\ep^\NN\x\leb$ almost every $(\w,x)\in T^\NN\x M$
 \begin{equation}\label{NUEOA2}%\tag{NUEoa2}
    \limsup_{n\to{+\infty}}\frac{1}{n}\sum_{j=0}^{n-1}
    \log \|Df_{\sigma^j(\w)}({f_\w^{j}(x))}^{-1}\|<-{a_0}.
\end{equation}

Condition \eqref{NUEOA} implies
that for $\theta_\ep^\NN$ almost every $\omega\in T^\NN$, the
\emph{expansion time} function
\begin{equation*}\label{de.exptime}
\mathcal E_\omega(x) = \min\left\{N\ge1\colon
\frac1n\sum_{j=0}^{n-1} \log \|Df({f_\omega^{j}(x))^{-1}}\| \leq
-{a_0}, \text{ for all $n\geq N$}\right\}
\end{equation*}
is defined and finite Lebesgue almost everywhere in $ M $. %We think of $\mathcal
%E_\omega(x)$
%as the waiting time before the exponential derivative growth along
%the random orbit kicks in.

According to Remark~\ref{r.strongran2}, condition \eqref{RecLentOA} is not
needed in all its strength. Actually, it is enough that it holds for suitable ${b_0}>0$, and
$\delta>0$ chosen in such a way that the proof of
Proposition~\ref{pr.hyperbolic1} works. In view of this,
for $\theta_\ep^\NN$ almost every $\omega\in T^\NN$ we can  define
the \emph{recurrence time} function Lebesgue almost everywhere in
$ M $,
\begin{equation*}\label{de.slowrec}
\mathcal R_\omega(x) = \min\left\{N\ge 1: \frac1n\sum_{j=0}^{n-1}
-\log \dist_\delta(f_\omega^j(x),\cc) \leq {b_0} , \quad
\text{for all }n\geq N\right\}.
\end{equation*}
%This
%is again an asymptotic statement and we have no a-priori knowledge about
%how fast this limit is approached or with what degree of
%uniformity for different points $x$ or different random orbits.
We
introduce the {\em tail set (at time
$n$)}\index{tail set}
\begin{equation}\label{tailset}
    \Gamma_\omega^n=\big\{x: \mathcal E_\omega(x) > n \ \text{ or } \ \mathcal R_\omega(x) > n \big\}.
\end{equation}
This is the set of points in $ M $ whose random orbit at time $ n
$ has not yet achieved either the uniform exponential growth of
derivative or the slow recurrence given by
conditions~\eqref{NUEOA} and~\eqref{RecLentOA}. If the critical
set is empty, we simply ignore the recurrence time function and
consider only the expansion time function in the definition of
$\Gamma_\omega^n$.

\subsection{Strong stochastic stability}

It is known
 that a non-uniformly
expanding map $f$ admits a finite number of absolutely continuous ergodic
invariant probability measures  (\emph{SRB}
measures); see \cite{ABV00}. Moreover, if $f$ is also topologically transitive, then it has a unique SRB probability measure $\mu_f$; see \cite{A03}.
We state now our main result which asserts the existence of a unique absolutely continuous ergodic
stationary probability measure and the strong stochastic stability for
non-uniformly expanding maps, meaning convergence in the $L^1$-norm of the
density of the stationary measure to the density the unique SRB
probability measure.

\cmt\label{thA} Let $f$ be a transitive non-uniformly expanding
map and non-uniformly expanding on random orbits, for which exist
$p>1$ and $C>0$ such that $\Leb(\Gamma_\omega^n)<Cn^{-p}$
for $\theta_\ep^\NN$ almost every $\omega\in T^\NN$. Then
\begin{enumerate}\item[1.] if $\ep>0$ is small enough, then $f$
admits a unique
absolutely continuous ergodic stationary probability
measure
$\mu_{\ep}$; \item[2.]  $f$ is
strongly
stochastically stable:
$$\lim_{\ep\to0}\left\|\frac{d\mu_\ep}{d\leb}-\frac{d\mu_f}{d\leb}\right\|_1=0.$$
\end{enumerate}\fmt

This theorem improves the main result in \cite{AAr03}, where stochastic stability was
established the  in the weak sense (convergence of $\mu_\ep$ to $\mu_f$ in the weak$^*$ topology). Furthermore,  our arguments for the strong stochastic stability can be carried out with no
extra assumptions on the probabilities $\theta_\ep$   as in \cite[Section~3]{AAr03}. %This allows that,
%by a convenient choice of the random perturbation, one can deduce as an application of Theorem~\ref{thA} the main result in \cite{AV02} on  \emph{strong sta\-tis\-ti\-cal
%sta\-bi\-li\-ty}.
%We say that a family $\cf$ of non-uniformly expanding maps is a \emph{uniform family} if $B, \beta$, as in the definition of a
%non-degenerate critical/singular set (which may depend on $f\in\cf$), and $a_0$ as in Definition \ref{def.NUE} can be
%chosen the same for every $f\in\cf$.
%\cco\label{SSStat}
%Let $\mathcal{F}$ be a uniform family of non-uniformly expanding transitive
%maps for which exist
%$p>1$ and $C>0$ such that $\Leb(\Gamma_{f}^n)<Cn^{-p}$ for every $f\in\mathcal{F}$. Then $\mathcal{F}\ni f\mapsto
%d\mu_{f}/d\leb$ is continuous with respect to the $L^1$-norm in the
%space
%of densities.
%\fco
%
%Indeed, let $T=\mathcal F$ and $\Phi$ be the identity map on $\mathcal F$.
% For $f\in \cf$ we write
%$\w_f=(f,f,f,\ldots)\in T^\NN$, and so
%$\Gamma_{w_f}^n=\Gamma_{{f}}^n$.
%Given any sequence $(f_n)_n$   in $\cf$ converging to $f_0\in \cf$ we
%define $\theta_\ep$
%to
%be the Dirac measure supported on $f_{n}$
%for $\ep\in(\frac1{n+1},\frac1n]$. In this case, the stationary measure coincides with the unique absolutely continuous ergodic invariant probability
%measure $\mu_f$ for the { deterministic} dynamics of the map $f$ in the support of $\theta_\ep$.

\section{Measures on random perturbations}\label{se.measures}

Throughout this section we prove the first item of Theorem \ref{thA}. Our strategy includes an extrapolation to a two-sided random perturbation setting that we introduce in Section~\ref{generalities} as well as known results relating both one-sided and two-sided random perturbations. In Section \ref{ris} we use this two-sided setting to construct random induced Gibbs-Markov strucures. In Section \ref{ssm} we construct suitable induced measures and use them to obtain an absolutely continuous stationary probability measure that we prove to be ergodic and unique in Section \ref{eau}.

\subsection{Generalities on stationary measures}\label{generalities}
We introduce the two-sided random perturbations, considering also the past of the realizations.
Similarly to the one-sided case, we consider the product space $T^\ZZ$ and the probability product measures $\theta_\ep^\ZZ$.
%We do it similarly to the one-sided case, just by taking realizations $\w=(\ldots,\w_{-1},\w_0,\w_1,\ldots)$
%in the product space $T^\ZZ$. Set $\theta_\ep^\ZZ=\ldots\x\theta_\ep\x\theta_\ep\x\ldots$ to be
%a Borel product measure in $T^\ZZ$, and for $\w\in T^\ZZ$ define
%\begin{equation*}
%f_{\omega}^n(x)=\left\{\begin{array}{ll}
%x& \text{ if }\quad  n=0\\
% (f_{\omega_{n-1}}\circ\dots\circ f_{\omega_1}\circ f_{\omega_0})(x)& \text{ if
%}\quad  n>0\\
%\end{array}\right..
%\end{equation*}
%The random orbits are the sequences $(f^n_\w(x))_{n\in\NN}$ and note that, in particular, $f_\omega=f_{\omega_0}$ and
%$f_{\sigma^k(\omega)}=f_{\omega_{k}}$ for every $\omega\in T^\ZZ$
%and $k\in\ZZ$, where $\sigma\colon T^\ZZ \to T^\ZZ$ is the two-sided left shift map.
We define the {\em two-sided skew-product} map as
 \begin{equation*}%\label{skewprod}
\begin{array}{rccc} S: & T^\ZZ\times M &\longrightarrow &
T^\ZZ\times M\\
 & (\omega, z) &\longmapsto & \big(\sigma(\omega),f_{\omega_0}(z)\big),
\end{array}
 \end{equation*}
 where $\sigma\colon T^\ZZ \to T^\ZZ$ is the left shift map.
  It is well known that a Borel probability measure $\mu^*$ in $T^\ZZ\times M$ invariant by  $S$ (in the usual deterministic sense) is characterized by an essentiality unique
disintegration $d\mu^*(\omega,x)=d\mu_{\omega}(x)
d\theta_\ep^{\ZZ}(\omega)$ given by a family $\{\mu_\w\}_{\w}$ of
{\it sample measures} on $M$ with the following properties:
\begin{enumerate}
  \item  $\omega\mapsto\mu_\omega(B)$ is $\theta_\ep^\ZZ$-measurable, for each Borel set $B\subset M$;
  \item  $B\mapsto\mu_\w(B)$ is a Borel probability measure in $M$, for each $\theta_\ep^\ZZ$ almost every $\w$;
  \item ${f_{\omega}}_*\mu_\omega=\mu_{\sigma(\omega)}$, for
$\theta_\ep^{\ZZ}$ almost every $\w$.
\end{enumerate}
 The relation between $\mu^*$ and the family
of
sample measures can be expressed as
$$\mu^*(A)=\int\mu_\w(A_\w)\,d\theta_\ep^\ZZ(\w),$$
where $A$ is a Borel subset of $T^\ZZ\times M$ and $A_\w=\{x\in M: (\w,x)\in
A\}$.

Given $\w=(\ldots,\w_{-1},\w_0,\w_1,\ldots)\in T^\ZZ$ we define the {\em  future} of $\w$ as
$\w^+=(\w_0,\w_1,\ldots)$ and the {\em  past} of $\w$ as
$\w^-=(\ldots,\w_{-2},\w_{-1})$. We consider the projection map \begin{equation*}\begin{array}{rccc}
\pi:&T^\ZZ\times M&\to& T^\NN\times M\\ &\pi(\w,x)&\mapsto&(\w^+,x).\end{array}
 \end{equation*}
We say that a Borel measure $\mu^*$ on $T^\ZZ\times M$ is a
{\it Markov measure} if for
$\theta_\ep^\ZZ$ almost every $\w\in T^\NN$ the corresponding sample measure $\mu_w$ depends only on the past $\w^-$ of $\w$.

\begin{Proposition}\label{relationmeasures}
The stationary probabilities $\mu_\ep$ for
$\{\Phi,(\theta_\ep)_{\ep>0}\}$ are
in a one-to-one correspondence with the $S$-invariant Markov probabilities
$\mu^*$, with that corres\-pon\-den\-ce being given by
$$\mu^*\mapsto \mu_\ep:=\int\mu_\w \,d\theta_{\ep}^{\ZZ}(\w)\quad\text{ and
}\quad \mu_\ep\mapsto\mu^*:=\lim_{n\to+\infty}
S^n_*(\theta_\ep^\ZZ\x\mu_\ep).$$
Moreover, for given stationary probability measure $\mu_\ep$, the corresponding
$\mu^*$
can
be recognized as the unique $S$-invariant probability measure such that
$\pi_*\mu^*=\theta_{\ep}^\NN\times\mu_\ep$.
 \end{Proposition}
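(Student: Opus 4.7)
The plan is to establish both directions of the correspondence and the uniqueness claim through a unified cylinder-test-function argument. For the direction Markov $\Rightarrow$ stationary marginal: given an $S$-invariant Markov probability $\mu^*$ with disintegration $d\mu^*(\omega,x) = d\mu_\omega(x)\,d\theta_\ep^\ZZ(\omega)$, I would set $\mu_\ep := \int\mu_\omega\,d\theta_\ep^\ZZ(\omega)$ to be the $M$-marginal, and apply $S$-invariance to $\Psi(\omega,x) = \varphi(x)$ to obtain $\int\varphi(f_{\omega_0}(x))\,d\mu_\omega(x)\,d\theta_\ep^\ZZ(\omega) = \int\varphi\,d\mu_\ep$. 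Since $\mu_\omega$ depends only on the past $\omega^-$ by the Markov property, it is $\theta_\ep^\ZZ$-independent of the coordinate $\omega_0$, so Fubini recasts the left-hand side as $\iint\varphi(f_t(x))\,d\mu_\ep(x)\,d\theta_\ep(t)$, which is precisely the stationarity identity.

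For the converse direction I would work with $\nu_n := S^n_*(\theta_\ep^\ZZ\times\mu_\ep)$ and show convergence on cylinder test functions $\Psi(\omega,x) = h(\omega_{-k},\ldots,\omega_k)\varphi(x)$. Using $S^n(\omega,x) = (\sigma^n\omega, f_\omega^n(x))$ and the factorization $f_\omega^n = f_{\sigma^{n-k}\omega}^k\circ f_\omega^{n-k}$, stationarity gives that $y := f_\omega^{n-k}(x)$ is $\mu_\ep$-distributed and, for $n > k$, independent of the block $(\omega_{n-k},\ldots,\omega_{n+k})$ under $\theta_\ep^\ZZ\times\mu_\ep$. After renaming the block as $(\alpha_0,\ldots,\alpha_{2k})$, the integral $\int\Psi\,d\nu_n$ reduces to $\int h(\alpha_0,\ldots,\alpha_{2k})\,\varphi(f_{(\alpha_0,\ldots,\alpha_{k-1})}^k(y))\,d\theta_\ep^{2k+1}(\alpha)\,d\mu_\ep(y)$, which is independent of $n$ for $n > k$. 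This stabilization produces the weak$^*$ limit $\mu^*$; the explicit formula shows that the conditional distribution of $x$ given the block $(\omega_{-k},\ldots,\omega_k)$ depends only on $(\omega_{-k},\ldots,\omega_{-1})$, and letting $k\to\infty$ gives that the sample measures $\mu_\omega$ under $\mu^*$ depend only on $\omega^-$, i.e., $\mu^*$ is Markov; $S$-invariance follows from $S_*\nu_n = \nu_{n+1}$.

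To close the bijection and handle uniqueness, I would first note that any $S$-invariant Markov $\mu^*$ with $M$-marginal $\mu_\ep$ automatically satisfies $\pi_*\mu^* = \theta_\ep^\NN\times\mu_\ep$, because $\mu_\omega$ depending only on $\omega^-$ together with $\theta_\ep^\ZZ$-independence of $\omega^-$ and $\omega^+$ yields that the conditional law of $x$ given $\omega^+$ equals the unconditional $M$-marginal $\mu_\ep$. For uniqueness, given any $S$-invariant $\nu$ with $\pi_*\nu = \theta_\ep^\NN\times\mu_\ep$, the key observation is that for $n > k$ the function $\Psi\circ S^n$ depends only on $(\omega_0,\ldots,\omega_{n+k},x)$, hence factors through $\pi$. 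Thus $S$-invariance and the condition on $\pi_*\nu$ give $\int\Psi\,d\nu = \int\Psi\circ S^n\,d\nu = \int\Psi\circ S^n\,d(\theta_\ep^\ZZ\times\mu_\ep) = \int\Psi\,d\nu_n \to \int\Psi\,d\mu^*$, forcing $\nu = \mu^*$. The main obstacle is the careful bookkeeping of two-sided indices as $n\to\infty$ and identifying which original coordinates become the new \emph{past}---this is what makes the Markov property emerge naturally from the stabilization formula and what makes the uniqueness argument work through a direct cylinder computation without invoking deeper convergence theorems.
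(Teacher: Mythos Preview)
Your argument is correct and essentially self-contained; the paper itself gives no proof at all, merely citing Arnold's monograph \cite{Arn98}. What you have written is in fact the standard route to this result (and is close to what one finds in Arnold or in Ohno~\cite{O83}): stabilize $S^n_*(\theta_\ep^\ZZ\times\mu_\ep)$ on cylinder functions by using stationarity to decouple the ``new past'' block $(\omega_{n-k},\dots,\omega_{n-1})$ from the pushed-forward point $y=f_\omega^{n-k}(x)$, then read off the Markov property from the explicit finite-dimensional formula, and finally deduce uniqueness by observing that $\Psi\circ S^n$ becomes $\pi$-measurable once $n>k$.

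Two small technical points you are implicitly using and might want to make explicit if this were to stand alone. First, the passage from ``$\int\Psi\,d\nu_n$ stabilizes for every cylinder $\Psi$'' to ``there exists a Borel probability $\mu^*$ realizing these limits'' is a Kolmogorov-type extension step: your stabilized formula defines a consistent family of finite-dimensional distributions on $T^{2k+1}\times M$, and one extends to $T^\ZZ\times M$ (here $T$ and $M$ are Polish, so this is routine). Second, the inference ``conditional law of $x$ given $(\omega_{-k},\dots,\omega_k)$ depends only on $(\omega_{-k},\dots,\omega_{-1})$ for every $k$ $\Rightarrow$ $\mu_\omega$ depends only on $\omega^-$'' is a martingale/backward-martingale convergence statement for conditional expectations with respect to the increasing $\sigma$-algebras generated by finite pasts; this is again standard but worth a sentence. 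Neither point is a gap---both are well-known---but they are the only places where your outline appeals to something not written down.
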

 \begin{proof} See \cite{Arn98}.
 \end{proof}
 From now on, we refer for $\mu_\ep$ and $\mu^*$ to be the corresponding stationary and
Markov probability measures, respectively.
We define the {\em one-sided skew-product} map by
\begin{equation*}
\begin{array}{rccc} S^+: & T^\NN\times X &\longrightarrow &
T^\NN\times X\\
 & (\omega, z) &\longmapsto & \big(\sigma^+(\omega^+),f_{\omega_0}(z)\big),
\end{array}
 \end{equation*}
 where $\sigma^+\colon T^\NN \to T^\NN$ is the one-sided left shift map. It is easy to see that $S^+\circ\pi=\pi\circ S$.

\begin{Proposition}
The following conditions are equivalent:
 \begin{itemize}
  \item[i)] $\mu_\ep$ is a stationary probability measure.
  \item[ii)] $\mu^*$ is $S$-invariant.
  \item[iii)] $\theta_{\ep}^\NN\times\mu_\ep$ is $S^+$-invariant.
 \end{itemize}
 \end{Proposition}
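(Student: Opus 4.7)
The plan is to prove the equivalences by going around the cycle (i) $\Rightarrow$ (ii) $\Rightarrow$ (iii) $\Rightarrow$ (i), exploiting the structure already established in Proposition~\ref{relationmeasures} and the intertwining relation $S^+\circ\pi=\pi\circ S$.

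First, the implication (i) $\Rightarrow$ (ii) is essentially free: Proposition~\ref{relationmeasures} asserts that to each stationary probability $\mu_\ep$ there corresponds an $S$-invariant Markov probability $\mu^*$ via $\mu^*=\lim_{n\to+\infty}S^n_*(\theta_\ep^\ZZ\times\mu_\ep)$, and by construction this $\mu^*$ is the one appearing in the statement. For (ii) $\Rightarrow$ (iii), I would push forward under $\pi$: since $S^+\circ\pi=\pi\circ S$, $S$-invariance of $\mu^*$ gives $S^+_*(\pi_*\mu^*)=\pi_*(S_*\mu^*)=\pi_*\mu^*$, and Proposition~\ref{relationmeasures} identifies $\pi_*\mu^*$ with $\theta_\ep^\NN\times\mu_\ep$.

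For the closing implication (iii) $\Rightarrow$ (i), the idea is to test $S^+$-invariance of $\theta_\ep^\NN\times\mu_\ep$ against functions $\varphi(\w,x)=\psi(x)$ depending only on the $M$-coordinate, with $\psi\colon M\to\RR$ continuous. Then
\begin{equation*}
\int\varphi\circ S^+\,d(\theta_\ep^\NN\times\mu_\ep)=\iint\psi(f_{\w_0}(x))\,d\theta_\ep^\NN(\w)\,d\mu_\ep(x)=\iint\psi(f_t(x))\,d\theta_\ep(t)\,d\mu_\ep(x),
\end{equation*}
where the last equality uses Fubini and the fact that only the zeroth coordinate of $\w$ enters the integrand. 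On the other hand $\int\varphi\,d(\theta_\ep^\NN\times\mu_\ep)=\int\psi\,d\mu_\ep$. Equating the two expressions yields exactly the stationarity identity from the definition, so $\mu_\ep$ is stationary.

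No serious obstacle is expected here; the statement is essentially a bookkeeping lemma that repackages the definitions. The one subtlety worth mentioning is that Proposition~\ref{relationmeasures} is doing the real work in linking $\mu_\ep$ with the correct Markov lift $\mu^*$, so that (i) and (ii) are not merely formally equivalent but correspond under a genuine bijection; the equivalence with (iii) is then a direct consequence of the semiconjugacy $\pi$ and the product structure of $\theta_\ep^\NN$.
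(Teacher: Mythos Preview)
Your argument is correct. The paper's own proof consists of a single citation: it refers to \cite{O83} for the equivalence of (i) and (iii), and implicitly relies on Proposition~\ref{relationmeasures} (itself cited from \cite{Arn98}) for the equivalence with (ii). You instead give a short self-contained proof of the cycle (i) $\Rightarrow$ (ii) $\Rightarrow$ (iii) $\Rightarrow$ (i), using only the semiconjugacy $S^+\circ\pi=\pi\circ S$, the identification $\pi_*\mu^*=\theta_\ep^\NN\times\mu_\ep$ from Proposition~\ref{relationmeasures}, and a direct test of $S^+$-invariance against functions depending only on the $M$-coordinate. Your route is more informative for a reader who does not have \cite{O83} at hand, at the modest cost of leaning on Proposition~\ref{relationmeasures} for both directions involving $\mu^*$; the paper's citation-based approach is terser but opaque.
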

 \begin{proof}
  See \cite{O83} for the equivalence between {\em i)} and {\em iii)}. \end{proof}

\begin{Definition}\label{random inv}
 A set $A\subset M$ is {\em random invariant} if for
$\mu_\ep$ almost every $x\in M$ we have

 \begin{tabular}{rcl}
 $x\in A$ &$\implies$& $f_t(x)\in A$, for  $\theta_\ep$  almost every $t$;\\
  $x\in M\setminus A$&$ \implies$&$ f_t(x)\in M\setminus A$, for  $\theta_\ep$ almost every $t$.
 \end{tabular}
\end{Definition}

\begin{Definition} A stationary measure $\mu_\ep$ is {\em ergodic} if for
every random invariant set $A$ we have
 $\mu_\ep(A)=0$ or $\mu_\ep(A)=\mu_\ep(M)$.
\end{Definition}

\begin{Proposition}\label{equiv.ergodic}
 The following conditions are equivalent:
 \begin{itemize}
  \item[i)] $\mu_\ep$ is ergodic.
  \item[ii)] $\theta_{\ep}^\NN\times\mu_\ep$ is $S^+$-ergodic.
  \item[iii)] $\mu^*$ is
$S$-ergodic.
 \end{itemize}
 \end{Proposition}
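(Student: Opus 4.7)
\emph{Plan.} I would split the equivalences as (ii)$\Leftrightarrow$(iii), transferring ergodicity between the one- and two-sided skew products, and (i)$\Leftrightarrow$(ii), matching random invariant subsets of $M$ with $S^+$-invariant events in $T^\NN\times M$. For (ii)$\Leftrightarrow$(iii), Proposition~\ref{relationmeasures} gives $S^+\circ\pi=\pi\circ S$ together with $\pi_*\mu^*=\theta_\ep^\NN\times\mu_\ep$, so $\pi$ is a measure-preserving factor map and (iii)$\Rightarrow$(ii) is automatic. The converse rests on the Markov property of $\mu^*$, which makes $(T^\ZZ\times M,S,\mu^*)$ the natural extension of the one-sided system; natural extensions preserve ergodicity, as in \cite{Arn98}.

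The easy implication (ii)$\Rightarrow$(i) is by contrapositive: a random invariant $A\subset M$ with $\mu_\ep(A)\in(0,1)$ yields, by Definition~\ref{random inv} and Fubini, an $S^+$-invariant set $T^\NN\times A$ of intermediate $(\theta_\ep^\NN\times\mu_\ep)$-measure. For the nontrivial direction (i)$\Rightarrow$(ii), again by contrapositive, let $\tilde A$ be $S^+$-invariant with $(\theta_\ep^\NN\times\mu_\ep)(\tilde A)\in(0,1)$ and set $\bar\phi(x):=\int\chi_{\tilde A}(\w,x)\,d\theta_\ep^\NN(\w)$. Iterating the $S^+$-invariance as $\chi_{\tilde A}(\w,x)=\chi_{\tilde A}(\sigma^{+n}\w,f_\w^n(x))$ and integrating out $\w_0,\dots,\w_{n-1}$ (independent of the tail $\sigma$-algebra of $\w$), one checks that $\bar\phi$ is harmonic for the Markov operator $Pu(x):=\int u(f_t(x))\,d\theta_\ep(t)$. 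The plan is then to deduce $\bar\phi\in\{0,1\}$ $\mu_\ep$-a.e., so that $A:=\{\bar\phi=1\}$ is a random invariant set of intermediate $\mu_\ep$-measure, contradicting (i). A maximum-principle argument gives forward invariance of $\{\bar\phi=1\}$ and $\{\bar\phi=0\}$ under the random dynamics, and stationarity of $\mu_\ep$ upgrades forward invariance of a positive-measure set to true random invariance, through the inclusion $A\subset f_t^{-1}A$ combined with $\int\mu_\ep(f_t^{-1}A)\,d\theta_\ep(t)=\mu_\ep(A)$. Under (i) these two level sets then have $\mu_\ep$-measure in $\{0,1\}$.

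The main obstacle is the remaining case $0<\bar\phi<1$ $\mu_\ep$-a.e.: (i) only forbids $\{0,1\}$-valued $P$-harmonic functions, and excluding intermediate bounded $P$-harmonic functions requires the full mixing structure of the Bernoulli base, e.g.\ via a martingale-convergence argument along the stationary chain $\bar\phi(X_n)$ combined with triviality of the chain's tail $\sigma$-algebra. The clean way around this is to invoke Ohno~\cite{O83}, already cited for Proposition~\ref{relationmeasures}, which establishes that every $S^+$-invariant event in $T^\NN\times M$ agrees modulo null sets with some $T^\NN\times A$ for a random invariant $A\subset M$; this bijection delivers (i)$\Leftrightarrow$(ii) at once and, combined with the natural extension argument, yields the full equivalence (i)$\Leftrightarrow$(ii)$\Leftrightarrow$(iii).
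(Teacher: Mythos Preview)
Your proposal is substantively correct and in fact goes further than the paper's own proof, which is purely by citation: the paper sends the reader to Kifer~\cite{Ki86} for (i)$\Leftrightarrow$(ii) and to Liu--Qian~\cite{LQ95} for (ii)$\Leftrightarrow$(iii), with no argument given. Your sketch of (ii)$\Leftrightarrow$(iii) via the factor map $\pi$ and the natural-extension structure of the Markov measure $\mu^*$ is exactly the standard route (and is what underlies the Liu--Qian reference), and your treatment of (ii)$\Rightarrow$(i) is clean.

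Two small points. First, a misattribution: in this paper, Proposition~\ref{relationmeasures} cites Arnold~\cite{Arn98}, not Ohno; Ohno~\cite{O83} is cited in the \emph{following} proposition for the equivalence between stationarity of $\mu_\ep$ and $S^+$-invariance of $\theta_\ep^\NN\times\mu_\ep$. Second, the structural fact you invoke at the end --- that every $S^+$-invariant event in $T^\NN\times M$ coincides mod $(\theta_\ep^\NN\times\mu_\ep)$-null sets with a cylinder $T^\NN\times A$ for some random invariant $A\subset M$ --- is precisely the content of the Kifer reference the paper uses, not Ohno. Your instinct to short-circuit the ``$0<\bar\phi<1$ everywhere'' obstacle by appealing to this result is correct; just point to \cite{Ki86} for it. With that adjustment your argument is a genuine (if brief) proof, whereas the paper offers none.
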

 \begin{proof} See \cite{Ki86} for the equivalence between {\em i)} and {\em ii)} and \cite{LQ95} for the equivalence between {\em ii)} and {\em iii)}
 \end{proof}

\subsection{Random inducing schemes} \label{ris}

From now on we will consider the two-sided random perturbations scheme. We define non-uniformly expansion in random orbits similarly to the previous one-sided definition, just considering two-sided realizations in conditions \eqref{NUEOA} (or \eqref{NUEOA2} if $\cc=\emptyset$) and \eqref{RecLentOA}. Analogously, we define the functions $\mathcal{E}_\w, \mathcal{R}_\w$ and the tail set $\Gamma_\w^n$ for $\w\in T^\ZZ$ and $n\ge 0$. It is easy to see that if we assume the hypothesis of the main theorem with respect to the one-sided random perturbations they still hold in the two-sided environment.

We set $\Omega_\ep$ as the $\theta_\ep^\ZZ$ full
measure subset of realizations $\w\in T^\ZZ$ for which conditions
\eqref{NUEOA} and \eqref{RecLentOA} are satisfied for all
$\sigma^k(\w), k\in\ZZ$, and Lebesgue almost
every $x\in M$. Note that if $f$ is itself a non-uniformly expanding map then $\w^*=(\ldots,t^*,t^*,t^*,\ldots)$
belongs to~$\Omega_\ep$.

\cde\label{def random markov
structure}
We say that $\w\in T^\ZZ$ induces a piecewise expanding
\emph{Gibbs-Markov}s map $F_{\omega}$ in a ball
$\Delta\subset M$ if there is a countable
partition $\cp_{\omega}$ of a full
Lebesgue measure subset $\mathcal{D}$ of
$\Delta$ and a \emph{return time function}
$\displaystyle R_\omega:\mathcal{D}\to\NN$, constant in each
$U_{\omega}\in\cp_{\omega}$, such that the map
$F_{\omega}(x)=f_\w^{R_\w(x)}(x):\Delta\to\Delta$ verifies:
\begin{enumerate}
\item{\em Markov:} ${F}_{\omega}$ is a $C^2$ diffeomorphism
        from each $U_{\omega}\in\cp_{\omega}$ onto $\Delta$.
\item {\em Expansion:} \label{random induced
expansion}%\index{expansion!uniform}
    there is $0<\kappa_\w<1$ such that for $x$
    in the interior of $U_{\omega}\in\cp_{\omega}$ $$\|D
{F}_{\omega}(x)^{-1}\| <\kappa_w.$$
\item {\em Bounded distortion:\/}\label{random bounded distortion}
    there is some constant $K_\w>0$ such that for every
    $U_{\omega}\in\cp_{\omega}$ and $x,y\in U_{\omega}$
    \[
    \log\left|\frac{\det D{F}_{\omega}(x)}{\det D{F}_{\omega}(y)}\right| \leq
K_\w
    \dist({F}_{\omega}(x), {F}_{\omega}(y)).
    \]
 \end{enumerate}
\fde
\noindent For simplicity of notation we shall write  $\{R_\omega>n\}$
for the set
$\{x\in\Delta:R_\omega(x)>n\}$.

\cte\label{deterministic GB}
Let $f:M\to M$ be a transitive non-uniformly expanding map. The realization $\w^*$, associated to the deterministic dynamics $f$, induces a piecewise expanding
Gibbs-Markov map
${F}:\Delta\to\Delta$, for some ball
$\Delta\subset M$.
\fte
\dem
See \cite{ALP05}.
\cqd

\cre\label{remark muf}
It is well known that a Gibbs-Markov map $F$ admits a unique absolutely continuous ergodic invariant  probability measure $\mu_F$; see e.g. \cite{Y99}. From this fact one easily deduces that the measure \begin{equation*}
\tilde\mu_f =
\displaystyle\sum_{j=0}^{{+\infty}}{f^j}_{\ast}\left(\mu_F\vert
\{R>j\}\right),
\end{equation*}
is absolutely continuous ergodic and invariant by the map $f$. The integrability of  the return time function $R$ with respect to $\leb$
implies that the measure $\tilde\mu_f $ is finite.  In such case we denote by  $\mu_f$ the normalization of $\tilde\mu_f$.
\fre

In what follows, $\Delta$ is the ball given by Theorem~\ref{deterministic GB}. The next theorem ensures that almost all realizations
induce
 piecewise expanding
Gibbs-Markov maps with some uniformity on the constants. Most of the
auxiliary results we use to prove this theorem can be
obtained by mimicking the deterministic ones in \cite{ALP05}, being that some of them
have already been extended to random perturbations in \cite{AAr03}.
Nevertheless, we describe in
detail their proofs in Section~\ref{se.monitoring}, in order to easily track the
extension to random
perturbations and
monitor a certain uniformity on random orbits, which is essential for our
purposes.

\cte\label{random Markov structure}
Let $f:M\to M$ be a transitive non-uniformly expanding map and non-uniformly expanding on random orbits. If $\ep>0$ is small enough
then
 \begin{enumerate}
  \item every
$\omega\in \Omega_\ep$ induces a piecewise expanding
Gibbs-Markov map $F_{\omega}$ in $\Delta\subset M$;
\item if there exist $p>0$, $C>0$ such that
$\Leb(\Gamma_\omega^n)<Cn^{-p}$ for every $\omega\in\Omega_{\epsilon}$,
  then there exists $C'>0$ such that for every
$\w\in\Omega_\ep$
 the return time function
satisfies
\begin{equation}\label{uniform tail decay}
\leb(\{R_\w>n\})\leq C'n^{-p}.
\end{equation}

\end{enumerate}
\fte

As we shall see latter, the proof of this theorem also gives that the
following uniformity conditions hold:
\begin{itemize}
\item[(U1)] Given integer $N>1$ and ${\gamma}>0$, then for $\ep>0$ is sufficiently small and $j=1,2,\ldots,N$
$$
m\left(\{R_{\sigma^{-j}(\omega)}=j\}\triangle\{R_{\sigma^{-j}(\tau)}
=j\}\right)\leq
{\gamma},\quad \forall \omega,\tau\,\in\Omega_\ep,$$
where $\triangle$ stands for the symmetric difference of two sets.

\item[(U2)] Given $\ep>0$ sufficiently small, then for every
$\w\in\Omega_\ep$, the constants $K_\w$ and
$\kappa_\w$
in the definition of induced piecewise expanding Gibbs-Markov map can be chosen uniformly. We will
refer to them as $K>0$ and $\kappa>0$, respectively.
 \end{itemize}

 From now one we assume the hypothesis of Theorem \ref{thA} and we consider $\ep>0$ sufficiently small so that  Theorem~\ref{random Markov structure} and conditions (U1) and (U2)
hold.

\subsection{Sample and stationary measures}\label{ssm}

We start defining a random induced dynamical system. This is the main motivation for the introduction of the two-sided random perturbations. Let us consider disjoint
copies $\Delta_\omega$ of $\Delta$,
associated to an $\omega\in\Omega_\ep$, and their partitions $\cp_\w$.  For $x\in\Delta_\w$ we
define
$F_\omega(x)=f_\omega^{R_\omega(x)}(x)$ and the dynamics consists in hopping
from $x\in\Delta_\omega$ to $F_\w(x)\in\Delta_{\sigma^{R_\omega(x)}(\omega)}$.
However, we also can keep regarding
this as a dynamical system in
$\Delta$. We refine recursively $\mathcal P_\w$ on $\Delta_\w$ with the partitions associated to
the images of each element of $\mathcal P_\w$: $$\mathcal
P_w^{(n)}=\bigvee_{j=0^{\phantom{j}}}^{n}\bigvee_{k\in
L_{\w,j}}(F_\w^j)^{-1}{\mathcal P}_{\sigma^k(\w)}$$
where $L_{\w,j}=\{k\in\NN_0:
F_\w^j(\Delta_\w)\cap\Delta_{\sigma^k(\w)}\neq\emptyset\}$.

  Our aim now is to prove that for each $\omega\in\Omega_\ep$ there is an
absolutely
continuous measure $\nu_{\omega}$ defined on $\Delta$ with some invariance property. Moreover, the density of
$\nu_{\omega}$ with respect to the Lebesgue measure will belong to
a {Lipschitz}-{type} space:
\begin{equation*}
\mathcal{H} = \left\{\varphi:\Delta\to\mathbb{R}\vert ~ \exists
K_\varphi>0,\, \left|{\varphi(x)}-{\varphi(y)}\right|\leq K_\varphi d(x,y)\,\,
~~\forall
x,y \in \Delta \right\}.
\end{equation*}
Given a measurable set
$A\subset\Delta_{\omega}$ we define
$$(F^{-1})_{\omega}(A)=\bigsqcup_{n\in\NN}\left\{x\in\Delta_{\sigma^{-n}
(\omega) } : R_{\sigma^{-n}(\omega)}(x)=n \qand
F_{\sigma^{-n}(\omega)}(x)\in A\right\}$$
and define $[(F^{j})^{-1}]_{\omega}(A)$ by induction.
Given a family
$\{\nu_{\sigma^{-n}(\omega)}\}_{n\in\NN}$ of measures on
$\displaystyle\bigsqcup_{n\in\NN}\Delta_{\sigma^{-n}(\omega)}$ we set
\begin{equation*}\label{pseudo
pf}{(F^{j})_{\omega}}^{*}\{\nu_{\sigma^{-n}(\omega)}\}_{n\in\NN}(A)=\sum_{n\in\NN}
\nu_{\sigma^{-n}(\omega)}([(F^{j})^{-1}]_{\omega}(A)\cap\Delta_{\sigma^{-n}
(\omega) } )
\end{equation*}
Here we use $*$ superscript  to distinguish this push-forward from the one for deterministic systems, whose notation is usually $*$ subscript.

\cte\label{exist.mu.induced} For every $\omega\in
\Omega_\epsilon $ there is an absolutely continuous
 finite measure $\nu_\w$ on $\Delta$ such that
${(F)_\w}^{*}\{\nu_{\sigma^{-n}(\omega)}\}_{n\in\NN}=\nu_\w$ and $\rho_{\omega}=d\nu_{\omega}/d\leb\in\mathcal{H}$.
Moreover, there is a constant $K_1>0$ such that
${K_1}^{-1}\leq
\rho_{\omega}\leq K_1$ for all $\w\in\Omega_\ep$.\fte

\begin{proof}  %\cite{BaBeM02}, Theorem 3.2.
Let $m_0$ be the probability measure $(m\vert{\Delta})/m(\Delta)$ on
$\Delta$ and
set $\{m_0\}_{n\in\NN}$ as the family of measures on
$\bigsqcup_{k\in\NN}\Delta_{\sigma^{-k}(\omega)}$ so that $m_0$ is the
measure on each $\Delta_{\sigma^{-k}(\w)}$.
For every
$A\subset\bigsqcup_{k\in\NN}\Delta_{\sigma^{-k}(\omega)}$,
with $A\subset [(F^j)_\w]^{-1}(\Delta_\w)$ and
$A\in{\mathcal P}_{\sigma^{-n}(\w)}^{(j)}$, for some $n\in\NN$, we define on
$\Delta_\w$ the function
$$
\rho_\w^{j,A}=\frac{d}{d\leb_0}(F_{\sigma^{-n}(\w)}^j)_{*}({m}_0\vert A).
$$
Let $x,y\in\Delta_{\omega}$ be arbitrary points, and let $x',y'\in
A$ be such that $x'\in [(F^{j})^{-1}]_{\omega}(x)$ and $y'\in
[(F^{j})^{-1}]_\w(y)$,  so that
$x',y'\in\Delta_{\sigma^{-n}(\omega)}$. For the
decreasing sequence $n=n_0>\ldots>n_j=0$ given by
$$ n_l=n_{l-1} -
R_{\sigma^{n_{l-1}}(\omega)}(F_{\sigma^{-n}(\omega)}^{l-1}(x')),\quad\text{ for
}1\leq l\leq j,$$
 we find that
$F_{\sigma^{-n}(\omega)}^{l}(x'), F_{\sigma^{-n}(\omega)}^l(y')$
lies in the same element $U_{\sigma^{-n_{l}}(\omega)}$ of
$\mathcal{P}_{\sigma^{-n_{l}}(\omega)}$, for $0\le l<j$.
By Theorem \ref{random Markov structure} (recall items \ref{random
induced expansion}.  and \ref{random bounded distortion}. in Definition \ref{def
random markov
structure} and ($U2$))
\begin{equation*}
\log\frac{\rho_\w^{j,A}(y)}{\rho_\w^{j,A}(x)}=\log\frac{|\det D
F_{\sigma^{-n}(\omega)}^j(x')|}{|\det D
F_{\sigma^{-n}(\omega)}^j(y')|}=\sum_{l=0}^{j-1}\log\left|\frac{\det D
F_{\sigma^{-n_l}(\omega)}({ F_{\sigma^{-n}(\omega)}^l}(x'))}{\det D
F_{\sigma^{-n_l}(\omega)}({ F_{\sigma^{-n}(\omega)}^l}(y'))}\right|\leq
K_1'\dist(x,y),
\end{equation*}
with $K_1'=K\frac1{1-\kappa}$, which is uniform in $\omega$, $j$ and $A$.
The
sequence
 $$\rho_{\omega,n}=\frac{d}{d\leb_0}\left(\frac1n \sum_{j=0}^{n -1}
{(F^j)_{\omega}}^{*}\{m_0\}_{n\in\NN} \right)$$
is a linear combination of terms as $\rho_\w^{j,A}$ so that one has
 $ \rho_{\omega,n}(x)\leq \exp(K_1'2\delta_0)\rho_{\omega,n}(y)$ for all $x,y$ in
$\Delta$, where
$\delta_0$ is the radius of $\Delta$.
 In particular there exists $K_1>0$ such that
${K_1}^{-1}\leq \rho_{\omega,n}\leq K_1.$
Moreover,
$$\left|{\rho_{\omega,n}(x)-\rho_{\omega,n}(y)}\right|\leq||\rho_{\omega,n}
||_\infty\left|\frac{\rho_{\omega,n}(x)}{\rho_{\omega,n}(y)}-1\right|\leq
K_1C\left| \log\frac{\rho_{\omega,n}(x)}{\rho_{\omega,n}(y)} \right|\leq
C'd(x,y).$$
 By
Ascoli-Arzela theorem, the sequence $(\rho_{\omega,n})_n$ is relatively
compact
in $L^\infty(\Delta,m_0)$ and has some subsequence
$(\rho_{\omega,n_i^\omega})_i$ converging to some $\rho_{\omega}$.
By the construction process we have ${K_1}^{-1}\leq \rho_{\omega}\leq K_1$
and $\rho_{\omega}\in\mathcal{H}$. The measure $\nu_\w=\rho_\w
d\leb$ is finite  since $\nu_w(\Delta)\leq K_1m(\Delta)<\infty$. By a diagonalization argument we can now choose a suitable family $\{\nu_{\sigma^l(\w)}\}_{l\in\ZZ}$
of such finite measures satisfying the {\it quasi-invariance} property
${(F)_{\omega}}^{*}\{\nu_{\sigma^{-n}(\w)}\}_{n\in\NN}=\nu_{\omega}$.
\end{proof}

For each $\omega\in\Omega_\ep$,  the Lipschitz constant $K_{\rho_{\omega}}$ for $\rho_\w\in\mathcal{H}$
will
depend only on $K$ and $\kappa$
given by Theorem
\ref{random Markov structure}. By $(U2)$, considering
$\epsilon$ small
enough, the constants $K_{\rho_{\omega}}$ can be taken the same
for all $\w\in\Omega_\ep$, which we will refer as $K_2>0$.

 We define the family  $\{\tilde\mu_\w\}_{\w\in\Omega_\ep}$ of finite Borel measures on $M$ by
\begin{equation}\label{mu w nao normalizada}
{\tilde\mu}_{\omega}=\sum_{j=0}^{+\infty}
(f_{{\sigma}^{-j}(\omega)}^j)_*(\nu_{\sigma^{-j}(\omega)}\vert\{R_{{\sigma}^{-j}
(\omega)}>j\}),\end{equation}
where the measures $\nu_{\sigma^{-j}(\omega)}$ are given by Theorem
\ref{exist.mu.induced}.
Since
$$\tilde\mu_\w(M)=\sum_{j=0}^{+\infty}
\nu_{\sigma^{-j}(\omega)}(\{R_{{\sigma}^{-j}
(\omega)}>j\})\leq K_1 \sum_{j=0}^{+\infty}
\Leb(\{R_{{\sigma}^{-j}
(\omega)}>j\}),$$
the hypothesis
on the decay of $\Leb(\Gamma_\omega^n)$ and Theorem \ref{random Markov
structure} give that they are finite measures.
The absolute continuity of the measures $\{\nu_{w}\}_{\w\in\Omega_\ep}$ implies that the measures
of the family $\{\mu_{\omega}\}_{\w\in\Omega_\ep}$ are absolutely continuous and the quasi-invariance property for
$\{\nu_{w}\}_{\w\in\Omega_\ep}$ implies that ${f_\w}_*\tilde\mu_\w=\tilde\mu_{\sigma(\w)}$.

\cre\label{pastdependence}
By construction, all the measures in the family
$\{\nu_{\sigma^{-n}(\omega)}\}_{n\in\NN}$ depend only
in the past
$\w^-=(\ldots,\omega_{-2},\omega_{-1})$ of $\omega$. Moreover, for $\w,\tau\in T^\ZZ$ with the same past the
sets $\{R_{{\sigma}^{-j}(\omega)}=j\}$ and $\{R_{{\sigma}^{-j}(\tau)}=j\}$, for
$j\geq1$, are exactly the same (as subsets of $\Delta\subset M$).
The
measures $\tilde{\mu}_\w$ involve sums of the type
$(f_{{\sigma}^{-j}(\omega)}^j)_*(\nu_{\sigma^{-j}(\omega)}\vert\{R_{{\sigma}^{-j
}(\omega)}>j\})$,
and since
$$m(\{R_{{\sigma}^{-j}(\omega)}>j\})=m\left(\Delta\setminus\left\{
\displaystyle\bigcup_{k=1}^{j}\{R_{{\sigma}^{-k}(\omega)}=k\}\right\}
\right)$$ and
$\nu_{\sigma^{-j}(\omega)}\ll m$, the measures $\tilde\mu_\w$ depend only on the past $\w^-$ of $\w$.
\fre

\cle  $\displaystyle\tilde\mu_\ep=\int\tilde\mu_\w \,d\theta_\ep^\ZZ(\w)$ is an  absolutely continuous stationary finite measure.
\fle
\begin{proof}
Since $\{\tilde\mu_\w\}_{\w\in\Omega_\ep}$ almost surely depend only on the past,
then $\tilde\mu_\ep=\int\tilde\mu_\w \,d\theta_\ep^\ZZ(\w)$ is a stationary measure. Actually,
%$$\mu_\ep=\iint\ \mu_\w \,d\theta_\ep^{\ZZ_0^+}(\w_0^+)
%d\theta_\ep^{\ZZ^-}(\w^-)=\int
%\mu_\w \,d\theta_\ep^{\ZZ^-}(\w^-),$$ and
for every continuous map $\varphi:M\to\RR$ we have
\begin{eqnarray*}
 \int \varphi(x) \,d\tilde\mu_\ep(x) &=&\iint 	 \varphi (x) \,d\tilde\mu_\w(x)
	d\theta_\ep^\ZZ(\w)\\
  &=& \iint \varphi (x)
 	\,d\tilde\mu_{\sigma(\w)}(x) d\theta_\ep^\ZZ(\w)
	%,\text{ by the $\theta_\ep$- invariance of $\sigma$}
\\
  %&=& \iint (\varphi\circ f_\w)(x)
% 	\,d\mu_{\w}(x) d\theta_\ep^\ZZ(\w)\\
&=& \iiiint (\varphi\circ f_{\w_0})(x)
 	 \,d\tilde\mu_{\w}(x)d\theta_\ep^{\ZZ^-}(\w^-)
	 d\theta_\ep(\w_0)d\theta_\ep^{\ZZ^+}(\sigma^+(\w^+))\\
%&=& \iiint (\varphi\circ f_{\w_0})(x)
% 	\,d\mu_\ep(x)
%	 d\theta_\ep(\w_0)d\theta_\ep^{\ZZ^+}(\w^+)\\
&=& \iint (\varphi\circ f_{\w_0})(x)
 	\,d\tilde\mu_\ep(x)
	d\theta_\ep(\w_0).
\end{eqnarray*}
Moreover, $\tilde\mu_\ep$ is absolutely continuous due to the absolute
continuously of
the measures $\{\tilde\mu_\w\}_{\w\in\Omega_\ep}$.
For the finiteness of $\tilde\mu_\ep$ we have we have by Theorem~\ref{random Markov structure} that
$$\tilde\mu_\ep(M)=\int\tilde\mu_\w(M)\,d\theta_\ep^\ZZ(\w)=\int\sum_{
n=0}^{+\infty}\nu_{\sigma^{-n}(\w)}(\{R_{\sigma^{-n}(\w)}>n\})\,d\theta_\ep^\ZZ(\w)\leq K_1\sum_{
n=0}^{+\infty}C'n^{-p}<\infty.$$
\end{proof}

 We now normalize $\tilde\mu_\ep$ and define an
absolutely continuous stationary probability measure
$\mu_\ep=\tilde\mu_\ep/\tilde\mu_\ep(M)$. Next we  prove that
  $\mu_\ep$ is the
unique absolutely continuous ergodic stationary probability measure.

\subsection{Ergodicity and uniqueness}\label{eau}

We say that $A\subset M$ is a {\em random forward invariant} set if for
$\mu_\ep$ almost every
$x\in A$ we have $f_t(x)\in A$ for  $\theta_\ep$ almost every $t$.
Let $\delta_1>0$ be given by Lemma
\ref{l.contr}.

\cpr \label{l.disco} Given any random forward
invariant set
$A\subset M$  with $\mu_\ep(A)>0$, there is a ball $B$ of radius $\delta_1/4$
such
that $\leb(B\setminus A)=0$. \fpr

\dem
It is enough to
prove that there exist disks of radius $\delta_1/4$ where the
relative measure of $A$ is arbitrarily close to one.
For $n\geq1$ let $A_n$ be the set of points $x\in A$ for which $f_\w^n(x)\in
A$, for $\theta_\ep^\ZZ$ almost every $\w$, and
$\tilde A=\displaystyle\cap_{n=1}^{+\infty}A_n$. Since $A$ is
random forward
invariant then
$\mu_\ep(A\setminus A_n)=0$ for all $n$ and thus
$\mu_\ep(A\setminus\tilde A)=0$. We have $\mu_\ep(\tilde
A)=\mu_\ep(A)>0$ which implies $\leb(\tilde A)>0$, since $\mu_\ep\ll\leb$. Morever, for
$\theta_\ep^\ZZ$ almost
every $\w$ we have $f^k_\w(\tilde A)\subset
A$, for all $k\in\NN$.
Since the set of points $x\in M$ for which for $\theta_\ep^\ZZ$ almost every
$\w$ there are infinitely many hyperbolic
pre-balls $V_\w^n(x)$ is random forward invariant, we
may assume, with no loss of generality,
that every point in $A$ has infinitely many hyperbolic pre-balls $V_\w^n(x)$
for $\theta_\ep^\ZZ$ almost every $\w$.  Recall that the hyperbolic pre-balls $V_\w^n(x)$ are sent diffeomorphically
by $f_\w^n$ onto hyperbolic balls  with radius $\delta_1$, that is
$f_\w^n(V_\w^n(x))=B(f_\w^n(x),{\delta_1})$.

Let
$\gamma>0$ be some small number. By regularity of $m$, there is a compact set
$\tilde A_c\subset \tilde A$
and  an open set $\tilde A_o\supset \tilde A$ such that
\begin{equation}\label{labep2}
\leb(\tilde A_o\setminus \tilde A_c)<\gamma \Leb(\tilde A).
\end{equation}
Assume that $n_0$ is large
enough so that, for every $x\in\tilde A_c$ and $\theta_\ep^\ZZ$ almost every
$\w$, any hyperbolic
 preball $V_\w^n(x)$,
associated with the $(\lambda,\delta)$-hyperbolic time $n$ for $(\w,x)$, with
$n\ge
n_0$, is contained in $\tilde A_o$. Let $W_\w^{n}(x)$ be the part of $
V_\w^n(x)$
which is sent diffeomorphically by $f_\w^n$ onto the ball
$B(f_\w^{n}(x),{\delta_1/4})$. By compactness there are $x_1,\ldots,x_r\in
\tilde A_c$ and $n(x_1),\dots,n(x_r)\ge n_0$ such that
\begin{equation}\label{eq.ac}
\tilde A_c\subset W_\w^{n(x_1)}(x_1)\cup\ldots\cup W_\w^{n(x_r)}(x_r).
\end{equation}
For the sake of notational simplicity, for each $1\le i\le r$ we shall write
 $$V_\w^i=V_\w^{n(x_i)}(x_i),\quad W_\w^i=W_\w^{n(x_i)}(x_i)\qand n_i=n(x_i). $$
Assume that
\begin{equation*}%\label{eqn1}
    \{n_1,\ldots,n_r\}=\{n_1^*,\ldots,n_s^*\},
\quad\text{with $n_1^*<n_2^*<\ldots<n_s^*$}.
\end{equation*}
Let $I_1\subset\mathbb{N}$ be a maximal subset of $\{1,\ldots,r\}$ such
that for each  $i\in I_1$ both  $n_i=n_1^*$, and $W_\w^i\cap
W_\w^j=\emptyset$ for every $j \in I_1$ with $j\ne i$. Inductively, we
define $I_k$  for $2\le k \le s$ as follows: supposing that
$I_1,\dots, I_{k-1}$ have already been defined, let $I_k$ be a
maximal set of $\{1,\dots,r\}$ such that for each $i\in I_k$ both
$n_i=n_k^*$, and %$W_{n_k}(x_i)\cap W_{n_k}(x_j)=\emptyset$ for all $j \in I_k$
%with $j\neq i$, and also
$W_\w^i\cap W_\w^j=\emptyset$ for every
$j\in I_1\cup\ldots\cup I_{k}$ with $i\neq j$.

Define  $I=I_1\cup\cdots\cup I_s$. By construction we have that
$\{W_\w^i\}_{i\in I}$
is a family of pairwise disjoint sets. We claim that $\{V_\w^i\}_{i\in I}$
is a covering of~$\tilde A_c$.
To see this, recall that by construction, given any $W_\w^j$ with
$1\le j\le r$, there is some $i\in I$ with
$n(x_i)\le n(x_j)$
such that $W_\w^{x_j}\cap W_\w^{x_i}\ne\emptyset$. Taking images by
$f_\w^{n(x_i)}$ we have
$$
f_\w^{n(x_i)}(W_j)\cap B(f_\w^{n(x_i)}(x_i),{\delta_1/4})\ne\emptyset.
$$
It follows from Proposition~\ref{p.contr}  that
$$
\diam(f_\w^{n(x_i)}(W_\w^j))\le\frac{\delta_1}{2}
\lambda^{(n(x_j)-n(x_i))/2}\le\frac{\delta_1}{2},
$$
 and so
$$f_\w^{n(x_i)}(W_\w^j)\subset B(f_\w^{n(x_i)}(x_i),{\delta_1}).$$ This
gives that $W_\w^j\subset V_\w^i$.  We have proved that given any $W_\w^j$ with
$1\le j\le r$, there is $i\in I$ so that $W_\w^j\subset V_\w^i$. Taking into
account \eqref{eq.ac}, this means that $\{V_\w^i\}_{i\in I}$
is a covering of $\tilde A_c$.

 By Corollary~\ref{co.distortion2} one may find $\tau>0$ such that
 $$
 m(W_\w^i)\ge\tau m(V_\w^i), \quad\text{for all $i\in I$}.$$
Hence,
 \begin{eqnarray*}
    m\left(\bigcup_{i\in I}W_\w^i\right) &=&\sum_{i\in I} m(W_\w^i) \\
     &\ge& \tau \sum_{i\in I} m(V_\w^i)  \\
     &\ge&\tau m\left(\bigcup_{i\in I}V_\w^i\right)\\
     &\ge&\tau m(\tilde A_c).
 \end{eqnarray*}
From \eqref{labep2} one easily deduces that
$m(\tilde A_c)>(1-\gamma)m(\tilde A)$. Noting that the constant~$\tau$ does not
depend
on $\gamma$, choosing $\gamma>0$ small enough we may have
 \begin{equation}\label{eqep2}
    m\left(\bigcup_{i\in I}W_\w^i\right)>\frac\tau2m(\tilde A).
 \end{equation}
We are going to prove that
 \begin{equation}\label{eqabs2}
 \frac{m(W_\w^i\setminus\tilde A)}{m(W_\w^i)}< \frac{2\gamma}{\tau}
 ,\quad\text{for some $i\in I$}.
 \end{equation}
 This is enough for our purpose. First, since $f_\w^{n(x_i)}(\tilde A)\subset A$
and
$f_\w^{n(x_i)}$ is injective on $W_\w^i$ we have
  \begin{eqnarray*}
   \leb(f_\w^{n(x_i)}(W_\w^i)\setminus
A)&\leq&\leb(f_\w^{n(x_i)}(W_\w^i)\setminus f_\w^{n(x_i)}(\tilde A)) \\
 &=&\leb(f_\w^{n(x_i)}(W_\w^i\setminus\tilde A)).
   \end{eqnarray*}
 Therefore,
by Corollary~\ref{co.distortion2}, taking
$B=f_\w^{n(x_i)}(W_\w^i)$ as a ball of radius $\delta_1/4$ we have
  $$
  \frac{\leb(B\setminus
A)}{m(B)}\le\frac{m(f^{n(x_i)}(
  W_\w^i\setminus\tilde A))}{m(f_\w^{n(x_i)}(W_\w^i))}\le C_2
  \frac{m( W_\w^i\setminus\tilde A)}{m( W_\w^i
  )}=\frac{2C_2\gamma}{\tau},
  $$
  which can obviously be made arbitrarily small, setting $\gamma\to 0$. From
this one easily deduces that there are disks of radius $\delta_1/4$ where the
relative measure of $A$ is arbitrarily close to one.

Finally, let us prove~\eqref{eqabs2}. Assume, by contradiction, that it
  does not hold.  Then, using \eqref{labep2} and \eqref{eqep2}
  \begin{eqnarray*}
  \gamma m(\tilde A) &> & m(\tilde  A_o\setminus \tilde A_c) \\
   &\ge & m\left(\left(\bigcup_{i\in I}W_\w^i\right)\setminus \tilde A \right)\\
    &\ge &\frac{2\gamma}{\tau} m\left(\bigcup_{i\in I}W_\w^i\right) \\
     %&\ge& \tau \left(m(W_{n_1}(x_1))+\cdots +m(W_{n_k}(x_k))\right) \\
     %&\ge&\tau m(V_{n_1}(x_1)\cup\cdots\cup V_{n_k}(x_k))\\
     &>&\gamma m(\tilde A).
 \end{eqnarray*}
This gives a contradiction.\cqd

\begin{Proposition}\label{ergouniq}
The stationary measure $\mu_\ep$ is the unique absolutely continuous ergodic
stationary probability measure.
\end{Proposition}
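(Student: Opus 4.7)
The plan is to prove both ergodicity and uniqueness by contradiction, combining Proposition~\ref{l.disco} with the topological transitivity of $f$ and the concentration $\supp(\theta_\ep)\to\{t^*\}$ as $\ep\to 0$. The mechanism in each case is the same: two disjoint balls, one contained in a random forward invariant set $A$ and the other in its complement, are forbidden to coexist because a deterministic orbit of $f$ from one ball reaches the other, and this behavior persists on positive $\theta_\ep^\NN$-measure under small random perturbations.

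For ergodicity, suppose $A\subset M$ is random invariant with $0<\mu_\ep(A)<\mu_\ep(M)$. Then $A$ and $A^c$ are both random forward invariant with positive $\mu_\ep$-measure, and Proposition~\ref{l.disco} yields balls $B,B'$ of radius $\delta_1/4$ with $\leb(B\setminus A)=0$ and $\leb(B'\cap A)=0$. By transitivity of $f$, I pick $z\in B$ (avoiding $\cc$ and its first $n$ preimages, which have zero Lebesgue measure) and $n\geq 1$ with $f^n(z)$ in the interior of $B'$. Continuity of $(t_0,\ldots,t_{n-1},x)\mapsto f_{t_{n-1}}\circ\cdots\circ f_{t_0}(x)$ produces an open $U\subset B$ around $z$ and an open $V\subset T^n$ around $(t^*,\ldots,t^*)$ with $f_\w^n(U)\subset B'$ whenever $(\w_0,\ldots,\w_{n-1})\in V$; for $\ep$ small, $\theta_\ep^n(V)>0$. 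The density $d\mu_\ep/d\leb$ is bounded below by $K_1^{-1}/\tilde\mu_\ep(M)$ on $\Delta$ (Theorem~\ref{exist.mu.induced}), and this positivity propagates to every nonempty open subset of $M$ by transitivity of $f$ and stationarity of $\mu_\ep$, giving $\mu_\ep(U)>0$. Since $U\subset B\subset A$ mod $\leb$ and $\mu_\ep\ll\leb$, we have $\mu_\ep(U\cap A)=\mu_\ep(U)>0$. But $f_\w^n$ is a local diffeomorphism near $z$ pushing a Lebesgue-full portion of $U$ into $B'\setminus A$, so on a set of positive $\mu_\ep\otimes\theta_\ep^\NN$ measure we have $x\in A$ while $f_\w^n(x)\notin A$, contradicting the random forward invariance of $A$.

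For uniqueness, let $\mu_\ep'$ be a second absolutely continuous ergodic stationary probability measure. By Proposition~\ref{equiv.ergodic} the lifts $\theta_\ep^\NN\times\mu_\ep$ and $\theta_\ep^\NN\times\mu_\ep'$ are $S^+$-ergodic, so if distinct they are mutually singular. Using the Birkhoff theorem for $S^+$ applied to the indicator of a distinguishing Borel set, one produces $A\subset M$ that is random forward invariant with $\mu_\ep(A)=1$ and $\mu_\ep'(A)=0$; in particular $A^c$ is random forward invariant and has positive $\mu_\ep'$-measure. Applying Proposition~\ref{l.disco} to $A$ with respect to $\mu_\ep$ and to $A^c$ with respect to $\mu_\ep'$ yields disjoint balls in $A$ and $A^c$ respectively, and the transitivity/continuity argument from the ergodicity step again yields a contradiction.

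The main obstacle is establishing $\mu_\ep(U)>0$ when the ball $B$ from Proposition~\ref{l.disco} lies outside $\Delta$, since the explicit lower bound on the density is known only on $\Delta$. The resolution is to observe that, by topological transitivity of $f$ and the continuity of $t\mapsto f_t$, some iterate $f_\w^k(\Delta)$ covers a neighborhood of any prescribed point of $M$ for $\w$ in a set of positive $\theta_\ep^\ZZ$-measure, so stationarity of $\mu_\ep$ propagates the positive-density lower bound on $\Delta$ to an open neighborhood of every point reachable from $\Delta$ by the random dynamics; this is the only place where the argument is not purely formal. A minor secondary subtlety for uniqueness is that the mutually singular decomposition must be refined to produce a truly random forward invariant $A$, which is handled by the correspondence in Proposition~\ref{relationmeasures} and the $S^+$-ergodic theorem.
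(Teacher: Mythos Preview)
Your argument is correct but structured differently from the paper's. The paper proceeds in two stages. First, without invoking transitivity, it shows that $\mu_\ep$ has only \emph{finitely many} ergodic components: iteratively split along nontrivial random invariant sets, apply Proposition~\ref{l.disco} to assign each piece a ball of radius $\delta_1/4$, and observe that disjoint random invariant pieces yield disjoint balls, so compactness of $M$ forces the decomposition to terminate. Second, it shows that any two absolutely continuous ergodic stationary measures coincide by comparing their \emph{basins}: $B(\mu_\ep^1)$ and $B(\mu_\ep^2)$ are random forward invariant, each contains (mod $\leb$) a ball of radius $\delta_1/4$ by Proposition~\ref{l.disco}, transitivity of $f$ plus continuity of $\Phi$ and random forward invariance force $m\big(B(\mu_\ep^1)\cap B(\mu_\ep^2)\big)>0$, and any point in this intersection has random time averages converging simultaneously to both integrals, hence $\mu_\ep^1=\mu_\ep^2$.

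The practical advantage of the paper's route is that the basin argument is phrased entirely in terms of Lebesgue measure, so one never needs to establish $\mu_\ep(U)>0$ for a particular open set $U$---precisely the step you flag as the main obstacle and resolve by propagating the lower bound on $d\mu_\ep/d\leb$ from $\Delta$ via stationarity. Your direct-contradiction approach (produce balls in $A$ and $A^c$, connect them by an $f$-orbit that persists under small noise, contradict random forward invariance) is more streamlined conceptually but carries that extra burden. For uniqueness, your mutual-singularity construction of a random forward invariant $A$ with $\mu_\ep(A)=1$ and $\mu_\ep'(A)=0$ amounts to taking $A=B(\mu_\ep)$, so the two proofs essentially converge at that point. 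Both arguments share the same informal ``for $\ep$ small'' step in which the connection time $n$ depends on balls that themselves depend on $\ep$.
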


\begin{proof}
We prove first that there is a finite partition $H_1,\ldots H_n$ of a full
Lebesgue measure set in
$M$ such that the normalized restrictions of $\mu_\ep$ to each $H_i$, $i =
1,\ldots,n$ is ergodic. Then, we use the topological transitivity of $f$ to
ensure the unicity.

 Suppose $\mu_\ep$ is is not ergodic. Then we may decompose $M$ into
two disjoint random invariant sets $H_1$ and $H_2$
($=M\setminus {H_1}$) both with positive
$\mu_\ep$-measure. In particular, both $H_1$ and $H_2$ have positive
Lebesgue measure. Let $\mu_\ep^1 $ and $ \mu_\ep^2$ be the normalized
restrictions of $\mu_\ep$ to $H_1$ and $ H_2$, respectively.
They are also absolutely continuous stationary measures. If they are
not ergodic, we continue decomposing them, in the same way as we did
for $\mu_\ep$.

On the other hand, by Proposition~\ref{l.disco}, each one of the random invariant
sets we find in this decomposition has full Lebesgue measure in some
disk with fixed radius. Since these disks must be disjoint, and $M$ is compact,
there can only be finitely many of
them. So, the decomposition must stop after a finite number of
steps, giving that $\mu_\ep$ can be written
$\mu_\ep=\sum_{i=1}^{p}\mu_\ep(H_i) \mu_\ep^i$ where $H_1, \dots, H_p$ is a
partition of $M$ into random invariant sets with
positive measure and each
$\mu_\ep^i=(\mu_\ep\vert H_i)/\mu_\ep(H_i)$ is an ergodic stationary probability
measure.

For the unicity assume that there are two distinct  ergodic
absolutely
continuous invariant measures $\mu_\ep^1$ and $\mu_\ep^2$. Since $B(\mu_\ep^1)$
and $B(\mu_\ep^2)$ are random forward invariant
sets, then by
Proposition~\ref{l.disco}  there are disks
$\Delta_1=B(p_1,{\delta_1/4})$ and $\Delta_2=B(p_2,{\delta_1/4})$ such that
$m(\Delta_i\setminus
B(\mu_\ep^i))=0$ for $i=1,2$. %This implies $\Delta_i\subset\Sigma_i$
%for $i=1,2$.
The topological transitivity of $f$, the continuity of $\Phi$ and the
random invariance
of $B(\mu_1)$ and
$B(\mu_2)$ imply that, if $\ep$ is small enough, then $m(B(\mu_\ep^1)\cap
B(\mu_\ep^2))>0$. Consider any point $x$ in this intersection. For
every continuous
function $\varphi\colon M\to\RR$ and a $\theta_\ep^\ZZ$ full subset of $T^\ZZ$,
$\frac1n\sum_{j=0}^{n-1}\varphi(f_\w^j(x))$ converges to $\int\varphi
\,d\mu_\ep^1$, as $n$ goes to infinity and, similarly, to $\int\varphi
\,d\mu_\ep^2$. The unicity of the limit implies $\int\varphi
\,d\mu_\ep^1=\int\varphi
\,d\mu_\ep^2$ so that $\mu_\ep^1=\mu_\ep^2$.

If we consider any stationary probability measure $\hat\mu_\ep$ on $M$, we can
do the
same procedure as before, and get a finite decomposition of $\hat\mu_\ep$ in
ergodic components, containing (Lebesgue mod 0) disks of a fixed radius. As we
saw, by the topological transitivity of $f$ one should have
$\hat\mu_\ep=\mu_\ep$.
\end{proof}

This finishes the proof of the first item of Theorem \ref{thA}.

\section{Strong stochastic stability}\label{se.stability}
In this section we prove the second item of Theorem~\ref{thA}.
We need to show the convergence of the density of the
unique absolutely continuous ergodic stationary probability measure $\mu_\ep$ for
$\{\Phi,(\theta_\ep)_{\ep>0}\}$ to the density of the unique $f$-invariant
absolutely continuous probability measure $\mu_f$, in the $L^1$-norm. The strategy is to get an absolutely continuous Borel measure $\nu_\ep$ on $\Delta$, with density $\rho_\ep$,
 by averaging over the previously constructed
induced random measures $\nu_\w$ on $\Delta$. The family of densities $(\rho_\ep)_{\ep>0}$ has an
accumulation point $\rho_\infty$ in $L^1(\Delta)$, as $\ep$ goes to $0$, which
give us a measure $\nu_\infty$ on $\Delta$. We can project this measure to a probability measure $\mu_\infty$ on $M$, using the dynamics of $f$,
in such a way that we can compare the densities of the measures $\mu_\ep$ and $\mu_\infty$ on
$M$, attesting their convergence in the $L^1$-norm. To conclude we  prove in Proposition \ref{mu infty is f invariant}
that
$\mu_\infty$ is $f$-invariant and
is actually equal
to $\mu_f$ due to the unicity of the SRB measure.

We start by defining an absolutely continuous measure $\nu_\ep$ on $\Delta$, with density
$\rho_\ep$, as
$$\nu_\epsilon =\int\nu_{\omega} \,d\theta^{\ZZ}_\epsilon(\omega)
\quad\text{and}\quad\rho_\ep =\int
\rho_\omega \,d\theta_\ep^\ZZ(\omega).$$
Consider any sequence $(\epsilon_n)_n$, with $\ep_n>0$ and
$\ep_n\to0$, as $n\to+\infty$. The family $\{\rho_{\ep_n}\}_n$ is
uniformly
bounded and equicontinuous. Indeed,
${K_1}^{-1}\leq\rho_{\ep_n}\leq K_1$
and, for $x,y\in\Delta$ we have
$|\rho_{\ep_n}(x)-\rho_{\ep_n}(y)|\leq K_2 d(x,y)$.
By Ascoli-Arzela theorem this implies that $({\rho_\ep}_n)_n$ has a converging
subsequence $\{\rho_{\ep_n'}\}$ to some $\rho_\infty$ in the $L^1$-norm, with
${K_1}^{-1}\leq\rho_\infty\leq K_1$.
Without loss of
generality we will assume that the whole sequence converges; see Remark~\ref{unicity}.
 This means
that given $\gamma>0$ there exists $\delta>0$ such that, if
$\ep_n<\delta$, then
\begin{equation}\label{convergencia induzidas}
\left||\rho_{\ep_n}-\rho_\infty\right\|_1<\gamma.
\end{equation}
Let $\nu_\infty$   be the Borel measure on $\Delta$ with density
$\rho_\infty$ and define an absolutely continuous Borel measure
$\tilde\mu_\infty$ in $M$ by
 \begin{equation}
\tilde\mu_{\infty}=\sum_{j=0}^\infty
(f^j)_*(\nu_{\infty}\vert\{R>j\}).\end{equation}
Let
$\tilde{h}_\infty=d\tilde\mu_{\infty}/dm$.
Since  $f$ is non-uniformly expanding, the decay of
the return time together with the upper bound for $\rho_\infty$
imply that $\tilde\mu_\infty$ is finite.
We normalize it to obtain an absolutely continuous probability measure
$\mu_\infty$
with
density $h_\infty$.

We introduce the transfer operator acting on $L^1(\Delta)$ by
\begin{equation*}\label{operadorLt}{\mathcal{L}}_{\omega}^j\varphi(x)=\sum_{
y=(f_{\omega}^{j})^{-1}(x)}\frac{\vphi(y)}{|\det Df_{\omega}^j(y)|}
\end{equation*}
and, to simplify the notation, we write
${\mathcal{L}}^j\varphi(x)$ for ${\mathcal{L}}_{
\w^*}^j\varphi(x)$. Standard results on transfer operators give that $$\int({\mathcal{L}}_{\omega}^j\varphi)\psi \,d\leb=\int\varphi(\psi\circ
f_{\omega}^j)\,d\leb$$
whenever these integrals
make sense, and the operator does not expand:
$$\int\left|\mathcal{L}_{\omega}^j\varphi(x)\right|\,d\leb\leq\int\mathcal{L}_{\omega}^j\left|\varphi(x)\right|\,d\leb =\int|\vphi|\, d\leb$$
for every $\vphi\in L^1(\Delta)$.

\cle\label{desOp2}   Given $\gamma>0$ and $j\in\NN$ there is
$\alpha_j>0$ such that if $\ep<\alpha_j$ then for every
$\omega,
\tau\in\Omega_\epsilon$ we have
$$\int\left|\!\mathcal{L}_{\omega}^j\varphi(x)-\mathcal{L}_{\tau}
^j\varphi(x)\right|\, d\leb \leq \gamma\|\f\|_\infty$$
for every $\f$ in $\mathcal{H}$. \fle

\dem
Let $\f\in\mathcal{H}$. Our assumptions on the critical set imply that the critical set $\cc$
intersects
$\Delta$  in a zero Lebesgue measure set. Given any $\gamma_1>0$,
define $\cc(\gamma_1)$ as the $\gamma_1$-neighborhood of this
intersection. For every $\omega\in\Omega_\epsilon$, we have
$m(f_{\omega}^j(\cc(\gamma_1)))\le\const
m(\cc(\gamma_1))$ for
some constant that may be taken uniform over $\w$ if $\epsilon<\alpha_j$ for
some small $\alpha_j$.

 By the continuity of $f^j_{\omega}$ with respect to $\omega$, we may fix
$\gamma_1=\gamma_1(j)$ small enough so
that if $\epsilon<\alpha_j<\gamma_1$ then
 \begin{equation}
 \label{eq.fi1fi2}
 m\big((f_{\omega}^{j})^{-1}(f_{\tau}^j (\cc(\gamma_1))\big)
  \le \frac{1}{2}\left(\frac{\gamma}{8}\right),
 \end{equation}
for every ${\omega}$, $\tau$ in $\Omega_\epsilon$.

We
decompose $\Delta\setminus\cc(\gamma_1)$ into a finite collection
$\cad(\omega)$ of domains of injectivity of $f_{\omega}^j$.
We may define a
corresponding collection $\cad(\tau)$ of domains of injectivity for
$f_{\tau}^j$ in $\Delta\setminus\cc(\gamma_1)$, and there is a natural
bijection associating to each $D_{\omega}\in\cad(\omega)$ a unique
$D_{\tau}\in\cad(\tau)$ such that the Lebesgue measure of $D_{\tau}\triangle
D_{\omega}$ is small, where $D_{\tau}\triangle D_{\omega}$  denotes the
symmetric
difference of the two sets $D_{\tau}$ and $D_{\omega}$. Observe that
$\cl_{\tau}^j$
is supported in
 $$
 f_{\tau}^j(\Delta)=f_{\tau}^j(\cc(\gamma_1)) \cup
\bigcup_{D_{\tau}\in\cad(\tau)} f_{\tau}^j(D_{\tau}),
 $$
and analogously for $\cl_{\omega}^j$\,.
 So,
 \begin{eqnarray}
\int |\cl_{\tau}^j \varphi -\cl_{\omega}^j \varphi |\,d\leb &\leq &
\int_{f_{\omega}^j(\cc(\gamma_1))\cup
f_{\tau}^j(\cc(\gamma_1)) }
 \big(|\cl_{\tau}^j \f | +|\cl_{\omega}^j \f|\big)\,d\leb \label{e1}
 \\ & + &
\sum_{D_{\omega}\in\cad (\omega)}\int_{f_{\omega}^j(D_{\omega})\cap
f_{\tau}^j(D_{\tau})}|\cl_{\tau}^j
\f-\cl_{\omega}^j\f|\,d\leb\label{e2}
 \\ & + &
 \sum_{D_{\omega}\in\cad(\omega)}
 \int_{f_{\omega}^j(D_{\omega})\triangle f_{\tau}^j(D_{\tau})}
 \big(|\cl_{\tau}^j \f| + |\cl_{\omega}^j \f|\big)\,d\leb, \label{e34} \end{eqnarray}
 where $D_{\tau}$ always denotes the element of $\cad(\tau)$ associated to each
$D_{\omega}\in\cad(\omega)$. Let us now estimate the expressions on the
right hand side of this inequality. We start with (\ref{e1}). For
notational simplicity, we write $E=f_{\omega}^j(\cc(\gamma_1))\cup
f_{\tau}^j(\cc(\gamma_1))$. Then
 \begin{equation*}
 \int_{E}|\cl_{\tau}^j\f|\,d\leb
 \leq  \int\chi_{E}\big(\cl_{\tau}^j|\f|\big)\,d\leb
 = \int(\chi_{E}\circ f_{\tau}^j)|\f|\,d\leb.
 \end{equation*}
 It follows from Hölder's inequality and \eqref{eq.fi1fi2} that

 $$
\int(\chi_{E}\circ f_{\tau}^j)|\f|\,d\leb
 \leq m\big((f^{j}_{\tau})^{-1}E\big)\|\f\|_\infty
 \leq \frac{\gamma}{8} \|\f\|_\infty.
 $$

The case associated to $f_{\omega}^j$ gives a similar bound for the second term in
(\ref{e1}). So,
 \begin{equation}
  \int_{f_{\omega}^j(\cc(\gamma_1))\cup
f_{\tau}^j(\cc(\gamma_1))}\big(|\cl_{\tau}^j\f| + |\cl_{\omega}^j \f|\big) \,d\leb
  \leq \frac{\gamma}{4}\|\f\|_\infty.
  \label{f1}
 \end{equation}
Making the change of variables $y=f_{\omega}^j(x)$ in (\ref{e2}), we may
rewrite it as
 $$
\int_{\widehat{D_{\omega}}}\left|\frac{\f}{|\det
Df_{\tau}^j|}\circ((f_{\tau}^{j})^{-1}\circ
f_{\omega}^j) -\frac{\f}{|\det Df_{\omega}^j|}\right|\cdot|\det Df_{\omega}^j|\,d\leb,
 $$
where
 $
\widehat{D_{\omega}}=(f_{\omega}^{j})^{-1}\big(f_{\omega}^j(D_{\omega})\cap
f_{\tau}^j(D_{\tau})\big)=
D_{\omega}\cap\big((f_{\omega}^{j})^{-1}\circ
f_{\tau}^j\big)(D_{\tau}).
 $
 For notational simplicity, we introduce
 $g = (f_{\tau}^{j})^{-1}\circ f_{\omega}^j$.
 The previous expression is bounded by
 $$
 \int_{\widehat
D_{\omega}} \left(|\f\circ g-\f|\cdot \frac{|\det Df_{\omega}^j|}{|\det
Df_{\tau}^j|\circ
g}+|\f|\cdot \left|\frac{|\det Df_{\omega}^j|}{|\det Df_{\tau}^j|\circ
g}-1\right|
\right)\,d\leb.
 $$
 Choosing $\alpha_j>0$ sufficiently small, the assumption
  $\epsilon<\alpha_j$ implies
 $$
\left|\frac{|\det Df_{\omega}^j|}{|\det Df_{\tau}^j|\circ
g}-1\right|\leq\frac{\gamma}{8},
 \quad\mbox{and so}\quad
 \frac{|\det Df_{\omega}^j|}{|\det Df_{\tau}^j|\circ g}\leq 2
 $$
 on
$\Delta\setminus\cc(\gamma_1)$ (which contains $\widehat
D_{\omega}$). Hence, since $\f$ belongs to $\mathcal{H}$,
 \begin{eqnarray*}
 \int_{f_{\omega}^j(D_{\omega})\cap f_{\tau}^j(D_{\tau})}|\cl_{\tau}^j
\f-\cl_{\omega}^j\f|\,d\leb
 & \leq & 2 \int_{\widehat D_{\omega}} |\f\circ g-\f|\,d\leb
+\frac{\gamma }{8}\int|\f|\,d\leb
  \\
 & \leq & 2 K_\f  \int_{\widehat D_{\omega}}
\|g-\id_\Delta\|_0\,d\leb
+\frac{\gamma}8\|\f\|_\infty
   \end{eqnarray*}
 Reducing $\alpha_j>0$, we can make $\|g-\id_\Delta\|_0$
 smaller than $\frac{\gamma\|\f\|_\infty}{16K_\f m(\Delta)}$, so that

  \begin{equation}
 \int_{f_{\omega}^j(D_{\omega})\cap f_{\tau}^j(D_{\tau})}|\cl_{\tau}^j
\f-\cl_{\omega}^j\f|\,d\leb \leq \frac{\gamma}{4}\|\f\|_\infty
.\label{f2}
 \end{equation}
We estimate the terms in (\ref{e34}) in much the same way as we
did for (\ref{e1}). For each $D_{\omega}$ let $E$ now be
$f_{\omega}^j(D_{\omega})\triangle f_{\tau}^j(D_{\tau})$. The properties of
the
operator, followed by Hölder's inequality, yield
 \begin{eqnarray*}
\int_{E}|\cl^j_{\tau} \f|\,d\leb
 & \leq &
\int(\chi_{E}\circ f_{\tau}^j)|\f|\,d\leb
 \leq m\big((f^{j}_{\tau})^{-1}E\big)\|\f\|_\infty
  \end{eqnarray*}
 Fix $\gamma_2>0$ such that $\#\cad(\omega)4\gamma_2 < \gamma$.
 Taking $\alpha_j$ sufficiently small, we may ensure that the
 Lebesgue measure of all the sets
 $$
 (f^{j}_{\tau})^{-1}E = (f^{j}_{\tau})^{-1}(f_{\omega}^j(D_{\omega})\triangle
f^j_{\tau}(D_{\tau}))
 $$
 is small enough so that $$m\big((f^{j}_{\tau})^{-1}E\big)<\gamma_2.$$
In this way we get
 \begin{equation}
\int_{f_{\omega}^j(D_{\omega})\triangle f_{\tau}^j(D_{\tau})}
 \big(|\cl_{\omega}^j \f| + |\cl_{\tau}^j\f|\big) \,d\leb
  \leq 2\gamma_2\|\f\|_\infty\label{f3}
 \end{equation}
 (the second term on the left is estimated in the same way as the first
 one).
  Putting (\ref{f1}), (\ref{f2}), (\ref{f3}) together, we obtain
 $$
 \int |\cl_{\tau}^j\f-\cl_{\omega}^j\f |\,d\leb
  \leq
\left(\frac{\gamma}{2}+\#\cad(\omega)2\gamma_2\right)\|\f\|_\infty<\gamma\|\f\|_\infty $$
which concludes the proof. \cqd

\cpr\label{estabilidadeestocastica}
Let $(\ep_n)_n$ be a sequence such that $\ep_n>0$ and
$\ep_n\to 0$, as ${n\to\infty}$. The
density $h_{\ep_n}$ converges to $h_\infty$ in the $L^{1}$-norm.\fpr

\dem
For simplicity we prove that $\|\tilde
h_{\ep_n}-\tilde
h_\infty\|_{1}$ converges to zero as $\ep_n$ goes to zero, where $\tilde h_{\ep_n}=d\tilde\mu_{\ep_n}/d\leb$, which implies the desired result.
For given
$\gamma>0$ we are looking for $\alpha>0$
such that if $\epsilon_n<\alpha$ then
$\| \tilde h_{\ep_n}- \tilde h_\infty\|_{1}<\gamma$.
By \eqref{uniform tail decay} there is an integer $N\geq 1$
for which \begin{equation}\label{usar uniform tail decay}
\sum_{j=N+1}^{{+\infty}}\Leb(\{R_{\sigma^{-j}(\omega)}>j\})<\frac{
\gamma}{4K_1},\quad \forall\omega\in\Omega_{\epsilon_n}.\end{equation}
We split the following sums
\begin{equation*}
\tilde\mu_{\infty}=\sum_{j=0}^N\xi_{\infty,j}\,\,+\eta_{\infty,N}\quad\text{and}
\quad
     \tilde
\mu_{\ep_n}=\sum_{j=0}^N\xi_{\ep_n,j}\,\,+\eta_{\infty,N},
\end{equation*}
   where, for every $j=0,1,2,\ldots,N$, we have
\begin{eqnarray*}\xi_{\infty,j}=(f^j)_*(\nu_{\infty}\vert\{R>j\}),& &
\xi_{\ep_n, j }=\int
(f^j_{{\sigma}^{-j}(\omega)})_*(\nu_{{\sigma}^{-j}(\omega)}\vert\{R_{{\sigma}^{
-j}(\omega)}>j\})\,d\theta_{\epsilon_n}^{\ZZ}(\omega)\end{eqnarray*}
and the remaining sums are
\begin{eqnarray*}
   \quad\eta_{\infty,N}&=&\sum_{j=N+1}^\infty
(f^j)_*(\nu_{\infty}\vert\{R>j\})\\
 \eta_{\ep_n,N}&=&\int\sum_{j=N+1}^{+\infty}(f^j_{{\sigma}^{-j}(\omega)})_*(\nu_
{{
\sigma}^{-j}(\omega)}\vert\{R_{{\sigma}^{-j}(\omega)}>j\})\,d\theta_{\epsilon_n}
^{\ZZ}(\omega).
\end{eqnarray*}
Recall that the realization $\w^*$, which reproduces the original deterministic
dynamical system, belongs to $\Omega_{\ep_n}$. By \eqref{usar uniform tail decay} we have

\vspace{-2cm}

\begin{eqnarray*}
\phantom{\displaystyle\eta_{\ep_n,N}(M)}
 &\phantom{=}&\phantom{
\int\sum_{j=N+1}^{+\infty}
(f^j_{{\sigma}^{-j}(\omega)})_*(\nu_{{\sigma}^{-j}(\omega)}\vert\{R_{{\sigma}^{
-j}(\omega)}>j\})(M)\,d\theta_{\ep_n}^{\ZZ}
}\\ \vspace{-18cm}
\displaystyle\eta_{\infty,N}(M)
 &=&
\sum_{j=N+1}^{+\infty} (f^j)_*(\nu_{\infty}\vert\{R>j\})(M)\\
 &=&\sum_{j=N+1}^{+\infty} \nu_{\infty}(\{R>j\})\\
 &=&\sum_{j=N+1}^{+\infty} \int \rho_{\infty}\chi_{\{R>j\}}\,d\leb\\
 &\leq&K_1\sum_{j=N+1}^{+\infty} \Leb(\{R>j\})\\
 &\leq&\frac{\gamma}4.
\end{eqnarray*}
Similarly,
\begin{eqnarray*}
\displaystyle\eta_{\ep_n,N}(M)
 &=&
\int\sum_{j=N+1}^{+\infty}
(f^j_{{\sigma}^{-j}(\omega)})_*(\nu_{{\sigma}^{-j}(\omega)}\vert\{R_{{\sigma}^{
-j}(\omega)}>j\})(M)\,d\theta_{\ep_n}^{\ZZ}\\
 &=&\int\sum_{j=N+1}^{+\infty}
\nu_{{\sigma}^{-j}(\omega)}(\{R_{{\sigma}^{-j}(\omega)}>j\})\,d\theta_{\ep_n}^{
\ZZ}\\
 &=&\int\sum_{j=N+1}^{+\infty} \int
\rho_{{\sigma}^{-j}(\omega)}\chi_{\{R_{{\sigma}^{-j}(\omega)}>j\}}\,d\leb\,d\theta_
{\ep_n}^{\ZZ}\\
  &\leq&K_1\int\sum_{j=N+1}^{+\infty}
\Leb(\{R_{{\sigma}^{-j}(\omega)}>j\})\,d\theta_{\ep_n}^{\ZZ}\\
% &\leq&K_1\int\frac{\gamma}{4K_1}\,d\theta_{\ep_n}^{\ZZ}\\
 &\leq&\frac{\gamma}4.
\end{eqnarray*}

 Altogether this gives us
\begin{equation}\label{primeiroe2}
\left\|\frac{d\eta_{\infty,N}}{d\leb}-\frac{d\eta_{\ep_n,N}}{d\leb}\right\|_1
\leq \eta_{\infty,N}(M)+\eta_{\ep_n,N}(M)\leq\frac\gamma 2.
\end{equation}
By \eqref{convergencia induzidas} there is some $\alpha_0>0$ such that
$\ep_n<\alpha_0$ implies
\begin{equation}\label{A0}\left\|\frac{d\xi_{\ep_n,0}}{d\leb}-\frac{d\xi_{\infty,0}
} {d\leb}\right\|_1=\left\|\rho_ { \ep_n
}-\rho_\infty\right\|_1<\frac{\gamma}{4}.\end{equation} On
the other hand, for every $j=1,2,\ldots,N$
\begin{eqnarray*}\label{desnu}
\left\|\frac{d\xi_{\ep_n,j}}{d\leb}-\frac{d\xi_{\infty,j}}{d\leb}\right\|_1&=&
\left\|\int\mathcal{L}_{\sigma^{-j}(\omega)}^j(\rho_{\sigma^{-j}(\omega)}\chi_{\{R_{{\sigma}^{-j}(\omega)}>j\}})\,d\theta_{\ep_n}^{\ZZ}-
\mathcal{L}^j(\rho_{\infty}\chi_{\{R>j\}})\right\|_1\\
&\leq& A_j+B_j+C_j,
\end{eqnarray*}
where
\begin{eqnarray*}%\label{A}\label{B}
A_j&=&\left\|\int \mathcal{L}_{\sigma^{-j}(\omega)}^j(\rho_{\sigma^{-j}(\omega)}\chi_{\{R_{{\sigma}^{-j}(\omega)}>j\}})
-\mathcal{L}^j(\rho_{\sigma^{-j}(\omega)}\chi_{\{R_{{\sigma}^{-j}(\omega)}>j\}})\,d\theta_{\ep_n}^{\ZZ}\right\|_1
\\
B_j&=&\left\|\int\mathcal{L}^j(\rho_{\sigma^{-j}(\omega)}\chi_{\{R_{{\sigma}^{-j}(\omega)}>j\}}
-\rho_{\sigma^{-j}(\omega)}\chi_{\{R>j\}})\,d\theta_{\ep_n}^{\ZZ}\right\|_1 \\
\\
C_j&=&\left\|\int\mathcal{L}^j(\rho_{\sigma^{-j}(\omega)}\chi_{\{R>j\}})\,d\theta_{\ep_n}^{\ZZ}
-\mathcal{L}^j(\rho_{\infty}\chi_{\{R>j\}})\right\|_1.
\end{eqnarray*}
The Lemma \ref{desOp2} implies that there exists some $\alpha_j>0$ such
that if $\ep_n<\alpha_j$ then
\begin{eqnarray*}\displaystyle
A_j&\leq&\iint\left|
\mathcal{L}_{\sigma^{-j}(\omega)}^j(\rho_{\sigma^{-j}(\omega)}\chi_{\{R_{{\sigma
}^{-j}(\omega)}>j\}})
-\mathcal{L}^j(\rho_{\sigma^{-j}(\omega)}\chi_{\{R_{{\sigma}^{-j}(\omega)}>j\}}
)\right|\,d\leb d\theta_{\ep_n}^{\ZZ}
\\
&\leq&\iint\left|
\mathcal{L}_{\sigma^{-j}(\omega)}^j(\rho_{\sigma^{-j}(\omega)})
-\mathcal{L}^j(\rho_{\sigma^{-j}(\omega)}
)\right|\,d\leb d\theta_{\ep_n}^{\ZZ}
\\&\leq&\frac\gamma{12N}.
\end{eqnarray*}
 We also consider $\alpha_j$ small enough so that, by condition $(U1)$, if $\ep_n<\alpha_j$ then
   \begin{equation*}
\Leb\left(\{R_{\sigma^{-j}(\w)}=l\}\triangle\{R=l\}\right)\leq\frac{
\gamma }{12jNK_1},
 \end{equation*}
for $l=1, 2,\ldots, j$ and $\w\in\Omega_{\ep_n}$. Since
\begin{eqnarray*}
 \Leb(\{R_{\sigma^{-j}(\w)}>j\}\triangle\{R>j\})
&=&\Leb\left(\left(\Delta\setminus\bigcup_{l=1}^j\{R_{\sigma^{-j}(\w)}
=l\}\right)
\triangle\left( \Delta\setminus\bigcup_{l=1}^j\{R=l\}
\right)\right)\\
&\leq&\sum_{l=1}^j\Leb\left(\{R_{\sigma^{-j}(\w)}=l\}\triangle\{R=l\}
\right)\\ &\leq&\sum_{l=1}^j\frac{
\gamma }{12jNK_1}=\frac{
\gamma }{12NK_1}
\end{eqnarray*}
we have then
\begin{eqnarray*}\label{B1 leq}
B_j&\leq&\iint\left|\mathcal{L}^j\left(\rho_{\sigma^{-j}(\omega)}(\chi_{\{R_{{
\sigma}^{-j}(\omega)}>j\}}-\chi_{\{R>j\}})
\right)\right|\,d\leb d\theta_{\ep_n}^{\ZZ}\\
&\leq&\iint\left|\rho_{\sigma^{-j}(\omega)}\right| \left|
\chi_{\{R_{{\sigma}^{-j}(\omega)}>j\}}-\chi_{\{R>j\}}\right|\,d\leb d\theta_{\ep_n}^{\ZZ}\\
%&\leq&2K_1\int\max\left\{\left\|\chi_{\{R>j\}}\right\|_1
%,\left\|\chi_{\{R_{{\sigma}^{-j}(\infty)}>j\}}\right\|_1\right\}\,d\theta_{\ep_n}^{\ZZ}\\
&\leq&K_1\int\Leb\left(\{R_{\sigma^{-j}(\w)}>j\}\triangle\{R>j\}
\right)\,d\theta_{\ep_n
}^{\ZZ}\\
&\leq&\frac\gamma{12N}.\\
&\phantom{\leq}&\phantom{\int\sum_{y=f^{-j}(x)}\frac{1}{\left|\det
Df^j(y)\right|}\left|\int\rho_{\sigma^{-j}(\omega)}(y)-\rho_{\infty}
(y)\,d\theta_{\ep_n}^{\ZZ}\right|d\leb}
\end{eqnarray*}

\vspace{-1cm}

If $\alpha_0$ is small enough, for $\epsilon_n<\alpha_{0}$ we also have
\begin{eqnarray*}\label{B2 leq}
C_j&\leq&\left\|\int\mathcal{L}^j(\rho_{\sigma^{-j}(\omega)})\,d\theta_{\ep_n}^{\ZZ}
-\mathcal{L}^j(\rho_{\infty})\right\|_1\\
&=&\left\|\mathcal{L}^j(\rho_{\ep_n})
-\mathcal{L}^j(\rho_{\infty})\right\|_1\\
&\leq&\left\|\rho_{\ep_n}
-\rho_{\infty}\right\|_1\\
&\leq&\frac\gamma{12N}.
\end{eqnarray*}
Altogether, considering
$\epsilon_n<\min_{j\in\{0,1,\ldots,N\}}\,\{\alpha_j\}$
we get
$A_j+B_j+C_j<\frac\gamma{4N}$, for $1\leq j\leq N$. The sum over all these $j$'s added to the
superior limit at \eqref{A0} is less
than $\frac\gamma2$. The estimate \eqref{primeiroe2} completes the
proof. \cqd

\cpr\label{mu infty is f invariant} The measure $\mu_{\infty}$ is
$f$-invariant.\fpr

 \dem  Take any continuous map $\varphi:M\to \RR$. Since
\begin{equation*}
\left|\int\varphi\,d\mu_{\ep_n}-\int\varphi\,d\mu_{\infty}\right|\leq
\|\varphi\|_\infty\|  h_{\ep_n}-  h_\infty\|_1\end{equation*}
and $\|  h_{\ep_n}-  h_\infty\|_{1}\to 0$,
as $\ep_n\to 0$, then $\mu_{\ep_n}$ converges to
$\mu_\infty$ in weak$^*$ sense and
\begin{equation*}\int\varphi\,d\mu_{\ep_n}\to\int\varphi\,d\mu_{\infty}.\end{equation*}
However, since $\mu_{\ep_n}$ is a stationary measure we have
\begin{equation*} \int\varphi(x)\,d\mu_{\ep_n}(x)=\iint(\varphi\circ
f_{s})(x)\,d\mu_{\ep_n}(x)\,d\theta_{\ep_n}(s).\end{equation*}
It suffices then to prove that
$\displaystyle\iint(\varphi\circ
f_{s})\,d\mu_{\ep_n}\,d\theta_{\ep_n}\,\to\int(\varphi\circ f)\,d\mu_{\infty}.$
So,
\begin{eqnarray*}
&&\left|\iint (\varphi\circ
f_{s})\,d\mu_{\ep_n}d\theta_{\ep_n}\,-\int(\varphi\circ
f)\,d\mu_{\infty}\right|\\
&\leq&\left|\iint (\varphi\circ
f_{s})\,d\mu_{\ep_n} d\theta_{\ep_n}\,-\int(\varphi\circ
f)\,d\mu_{\ep_n}\right|+\left|\int(\varphi\circ
f)\,d\mu_{\ep_n}\,-\int(\varphi\circ f)\,d\mu_{\infty}\right|.
\end{eqnarray*}
For $\ep_n$ sufficiently small, $(\varphi\circ f_{s}-\varphi\circ f)$ is uniformly close
to $0$, for every $s\in\supp \theta_{\ep_n}$. The
second
term is smaller than $\left\|\varphi\right\|_\infty\left\|h_{\ep_n}-h_\infty\right\|_1$, which is
close to zero if $\ep_n
$ is small enough.
 \cqd

\cre \label{unicity} The unicity
of an SRB measure $\mu_f$ for $f$ ensures that in the previous arguments we can consider
all the sequence $\ep_n$ instead just a subsequence of it. To see
this, for every subsequence of $\ep_n$ we can repeat the previous process and obtain a new subsequence ${\ep_n'}$
for which the corresponding sequence of densities $(h_{\ep_n'})_{n}$ has limit $h_{\infty}'$, and $\mu_f'=h_{\infty}'dm$ is also a $f$-invariant SRB measure (thus equal to $\mu_f$).  On the other hand, one knows that if all subsequences of a given sequence admits a subsequence
converging to a same limit then the whole sequence converges to that limit.
\fre
This finishes the proof of Theorem \ref{thA}.

\section{Induced Gibbs-Markov maps}\label{se.monitoring}

In this section we prove Theorem~\ref{random Markov structure}.

\subsection{Hyperbolic times and bounded
distortion}

Hyperbolic times were introduced in \cite{A00} for deterministic
systems
and extended in \cite{AAr03} to a random context. Here we recall the definition and the main properties.
For the next definition we fix $B>1$ and
$\beta>0$ as in
Definition~\ref{d.nd}, and take a constant $b>0$ such that $2b <
\min\{1,\beta^{-1}\}$. \cde Given $0<\lambda<1$ and $\delta>0$, we
say that $n\in\NN$ is a $(\lambda,\delta)$-hyperbolic time for
$(\omega,x)\in T^\ZZ\times M$ if, for every $1\leq k\leq n$,
 \begin{equation}\label{d.htr}
 \prod_{j=n-k}^{n-1}
 \|Df_{\sigma^j(\w)}(f^j_{\omega}(x))^{-1}\|\leq\lambda^k
 \quad\mbox{and}\quad
 \dist_\delta(f_{\omega}^{n-k}(x),\cc)\geq \lambda^{bk}.
 \end{equation}
In the case of $\cc=\emptyset$ the definition of
$(\lambda,\delta)$-hyperbolic time reduces to the first condition
in \eqref{d.htr} and we simply call it a {\em $\lambda$-hyperbolic
time}.\fde

We define, for $\omega\in T^\ZZ$ and $n\geq 1$, the set
 $$
 H_\w^n=\{ x\in  M\colon \mbox{ $n$ is a
 $(\lambda,\delta)$-hyperbolic time for $(\omega,x)$ }\}.
 $$
 It  follows from the definition that if
$n$ is a $(\lambda,\delta)$-hyperbolic time for
$(\omega,x)\in T^\ZZ\times M$, then $(n-j)$ is a
$(\lambda,\delta)$-hyperbolic
time for $(\sigma^j(\w),f^j_\w(x))$, with $1\leq j< n$.

 \cle[Pliss] \label{l.pliss} Given $0<c\le A$   let $\zeta=c/A$.
Assume that $a_1, \ldots, a_N$ are real numbers satisfying $a_j
\le A $ for every $ 1\le j \le N $ and $
\sum_{j=1}^{N} a_j \ge cN.$ Then there are $\ell \ge \zeta N$
and $1 \le n_1 < \cdots < n_\ell \le N$ so that
$
\sum_{j=n}^{n_i} a_j\ge 0
$
for every $ 1 \le n \le n_i $ and  $1\le i\le  \ell$. \fle

\dem See \cite{ABV00}, Lemma 3.1.\cqd

\cde\label{de.freq} We say that the {\em frequency of
$(\lambda,\delta)$-hyperbolic times}\index{frequency} for
$(\omega,x)\in T^{\ZZ}\times M$ is larger than
$\zeta>0$ if, for large $n\in\NN$, there are $\ell \ge\zeta
n$ and integers $1\le n_1<n_2\dots <n_\ell\le n$ which are
$(\lambda,\delta)$-hyperbolic times for $(\omega,x)$.
 \fde

\cpr\label{pr.hyperbolic1} Assume that $f$ is
non-uniformly expanding on random
orbits. Then there are $0<\lambda<1$, $\delta>0$ and $\zeta>0$
(depending only on ${a_0}$ in \eqref{NUE} and on the map $f$)
such that for
all $\omega\in\Omega_\ep$ and Lebesgue almost every point $x\in M$, the
frequency of $(\lambda,\delta)$-hyperbolic times
for $(\w,x)$ is larger than $\zeta$. \fpr

\dem The proof follows from using
Lemma~\ref{l.pliss} twice, first for the sequence given by
$a_j=-\log\|Df_{\w_{j-1}}(f_\omega^{j-1}(x))^{-1}\|$  (up to a cut off that
makes it bounded from above in the presence of critical set), and then with
$a_j=\log\dist_\delta(f_\omega^{j-1}(x),\cc)$ for a convenient
choice of  $\delta>0$. We prove that there exist many times $n_i$
for which the conclusion of  Lemma~\ref{l.pliss} holds,
simultaneously, for both sequences. Then we check that any such
$n_i$ is a $(\lambda,\delta)$-hyperbolic time for $(\omega,x)$.

Assuming that \eqref{NUEOA2} and \eqref{RecLentOA} holds for $(\omega,x)$, then
for large
$N$ we have
\begin{equation*}
\sum_{j=1}^{N} -\log\|Df_{\w_{j-1}}(f_\omega^{j-1}(x))^{-1}\| \ge {a_0}
N\,.
\end{equation*}
If $\mathcal C=\emptyset$, we just use Lemma~\ref{l.pliss} for the
sequence
$$a_j=-\log\|Df_{\w_{j-1}}(f_\omega^{j-1}(x))^{-1}\|
-\frac{a_0}2,$$
with $c=a_0/2$ and $A=\max_{t\in\supp(\theta_\ep)}\max_{x\in
M}\{-\log\|Df_t(x)^{-1}\|-a_0/2\}$, we
obtain the result for $\zeta={a_0}/(2A)$ and $\lambda=e^{-{a_0}/2}$ ($\delta$
is not required in this case).

If $\mathcal C$ is not empty we recall assumption \eqref{e.perturbation}. Take
$B,\beta>0$ given by Definition~\ref{d.nd}.
Condition (c$_1$) implies that for
large $\rho>0$
\begin{equation}
\label{e.bound2} \left| \,\log \|Df(x)^{-1}\|\,\right| \le \rho \,
|\log \dist(x,\cc)|
\end{equation}
for every $x\in M\setminus \cc$. Fix $\alpha_1>0$ so that
$\rho\alpha_1 \le {a_0}/2$. The condition
\eqref{RecLentOA} of slow recurrence to $\cc$ ensures that we may
choose $r_1>0$   so that for large $N$
\begin{equation}
\label{e.bound1}
\sum_{j=1}^{N}\log\dist_{r_1}(f_\omega^{j-1}(x),\cc)\ge
-\alpha_1 N\,.
\end{equation}
 Fix any open neighborhood $V$ of $\cc$  and take $Q \ge \rho \, |\log r_1|$
large enough
so that it is also an upper bound for $-\log\|Df^{-1}\|$ on
$M\setminus V$. Then let
$$
J=\big\{1 \le j \le N\,\colon\,-\log\|Df(f_\omega^{j-1}(x))^{-1}\|
> Q\big\}.
$$
Note that if $j\in J$, then $f_\omega^{j-1}(x)\in V$. Moreover,
for each $j\in J$
$$
\rho \, |\log r_1| \le Q < -\log\|Df(f_\omega^{j-1}(x))^{-1}\| <
\rho \, |\log \dist(f_\omega^{j-1}(x),\cc)|,
$$
which shows that  $\dist(f_\omega^{j-1}(x),\cc) < r_1$ for every
$j\in J$. In particular,
$$
\dist_{r_1}(f_\omega^{j-1}(x),\cc)=\dist(f_\omega^{j-1}(x),\cc)<r_1,\quad\forall
j\in J.
$$
Therefore, by (\ref{e.bound2}) and (\ref{e.bound1}),
$$
\sum_{j\in J} -\log\|Df(f_\omega^{j-1}(x))^{-1}\| \le \rho
\sum_{j\in J} | \log \dist(f_\omega^{j-1}(x),\cc) | \le \rho \,
\alpha_1 N\le \frac{a_0}{2}N.
$$
%We have chosen $\alpha_1>0$ in such a way that the last term is
%less than ${a_0} N/2$.
%
Define
$$
b_j=\left\{\begin{array}{ll}
           -\log\|Df(f_\omega^{j-1}(x))^{-1}\|, & \text{if } j\notin J \\
           0 & \text{if } j\in J.
           \end{array}\right.
$$
By definition, $b_j\le Q$ for each $1\le j \le N$. As a
consequence,
$$
\sum_{j=1}^{N} b_j =  \sum_{j=1}^{N}
-\log\|Df(f_\omega^{j-1}(x))^{-1}\| -
                      \sum_{j\in J} -\log\|Df(f_\omega^{j-1}(x))^{-1}\|
                   \ge \frac{a_0}{2} N \,.
$$
Defining
 $a_j=b_j-{a_0}/4,$
we have
$$ \sum_{j=1}^{N}  a_j
                   \ge \frac{a_0}{4} N \,.
                   $$
Thus, we may apply Lemma~\ref{l.pliss} to $a_1,\dots,a_N$, with
$c={a_0} /4$ and $A=Q$. The lemma provides $\zeta_1>0$ and
$\ell_1\ge \zeta_1 N$ times $1 \le p_1< \cdots <p_{\ell_1}\le N$
such that for every $0 \le n \le p_i-1$ and $1\le i\le \ell_1$
\begin{equation}
\label{e.conclusion1} \sum_{j=n+1}^{p_i}
-\log\|Df(f_\omega^{j-1}(x))^{-1}\| \ge \sum_{j=n+1}^{p_i} b_j =
\sum_{j=n+1}^{p_i} \left(a_j +\frac{a_0}{4}\right)\ge
\frac{a_0}{4}(p_i-n)x.
\end{equation}

Now fix $\alpha_2>0$ small enough so that $\alpha_2 <
\zeta_1b {a_0}/4$, and let $r_2>0$ be such that
\begin{equation*}\label{e.e2r2}
\sum_{j=1}^{N} \log\dist_{r_2}(f_\omega^{j-1}(x),\cc)\ge -
\alpha_2 N \,.
\end{equation*}
Defining $a_j=\log\dist_{r_2}(f_\omega^{j-1}(x),\cc)+b{a_0}/4$
we have
      $$
\sum_{j=1}^{N}  a_j
                   \ge \left(\frac{bc_0}{4}-\alpha_2\right) N \,.
                   $$
Applying now Lemma~\ref{l.pliss} to $a_1,\dots,a_N$ with
$c=b{a_0}/{4}-\alpha_2$ and $A=b{a_0}/{4}$, we conclude
that there are $l_2\ge\zeta_2 N$ times $1\le q_1 < \cdots <
q_{\ell_2}\le N$ such that for every $0 \le n \le q_i-1$ and $1\le i \le \ell_2$
\begin{equation}
\label{e.conclusion2} \sum_{j=n+1}^{q_i}
\log\dist_{r_2}(f_\omega^{j-1}(x),\cc) \ge -\frac{b a_0}{4} \,
(q_i - n).
\end{equation}
Moreover,
$$
\zeta_2=\frac{c}{A}=1-\frac{4\alpha_2}{b a_0}.
$$
Finally, our condition on $\alpha_2$ means that
$\zeta_1+\zeta_2>1$. Let $\zeta=\zeta_1+\zeta_2-1$. Then
there exist $\ell=(\ell_1+\ell_2-N) \ge \zeta N$ times $1 \le
n_1< \cdots < n_\ell\le N$ at which (\ref{e.conclusion1}) and
(\ref{e.conclusion2}) occur simultaneously:
$$
\sum_{j=n}^{n_i-1} -\log\|Df(f_\omega^j(x))^{-1}\| \ge
\frac{a_0}4(n_i-n)
$$
and
$$
\sum_{j=n}^{n_i-1} \log\dist_{r_2}(f_\omega^j(x),\cc) \ge -\frac{b
{a_0}}4 (n_i - n),
$$
for every $0 \le n \le n_i-1$ and $1\le i \le \ell$. Letting
$\lambda=e^{-{a_0}/4}$ we easily obtain from the inequalities
above
$$
\prod_{j=n_i-k}^{n_i-1} \|Df(f_\omega^j(x))^{-1}\| \le \lambda^{k}
\quad\text{and}\quad \dist_{r_2}(f_\omega^{n_i-k}(x),\cc) \ge
\lambda^{b k},
$$
for every  $1\le i \le \ell$ and $1\le k\le n_i$. In other words,
all those $n_i$ are $(\lambda,\delta)$-hyperbolic times for
$(\omega,x)$, with $\delta=r_2$. \cqd

\cre\label{r.strongran2} In the presence of critical set, one
can sees that condition \eqref{RecLentOA} is not needed in all
its strength.
Actually, it is enough that \eqref{RecLentOA} holds for some
sufficiently small ${b_0}>0$  and some convenient
 $\delta>0$ (e.g. ${b_0}=\min\{\alpha_1,\alpha_2\}$ and $\delta=\max\{r_1,r_2\}$  in the proof of Proposition~\ref{pr.hyperbolic1}). \fre

\cre\label{r.start} Observe that the proof of
Proposition~\ref{pr.hyperbolic1} gives more precisely that if for
some $(\omega,x)\in T^\ZZ\times M$ and $N\in \NN$
\begin{equation*}
\sum_{j=0}^{N-1} -\log\|Df_{\sigma^j(\w)}(f_{\omega}^j(x))^{-1}\| \ge {a_0} N
\qand \sum_{j=0}^{N-1} \log\dist_{\delta}(f_{\omega}^j(x),\cc)\ge
- b_0 N \,
\end{equation*}
(where $b_0$ and $\delta$ are chosen according to
Remark~\ref{r.strongran2}), then there exist integers $0< n_1<
\cdots < n_l\le N$ with $l\ge \zeta N$ such that $n_i$ is a
$(\lambda,\delta)$-hyperbolic time for $(\omega,x)$, for each $1\le
i \le l$. \fre

The next result give us property $(m_1)$ at Section \ref{s.check}, and is needed
to
ensure later some metric estimates on the algorithm for the random induced partition.

\cle\label{c:hyperbolic3}
    Let $A\subset M$ be a set with positive
    Lebesgue measure, for whose points $x$ we have $(\omega,x)$ with
frequency of
    $(\lambda,\delta)$-hyperbolic times greater than $\zeta>0$, for all $\w\in\Omega_\ep$. Then there is
$n_0\in\NN$ such that for $n\ge n_0$
        $$
       \frac{1}{n}\sum_{j=1}^{n}\frac{\leb(A\cap H_\w^{j})}{\leb(A)} \geq
       \zeta.
       $$          \fle

 \dem Since we are assuming that points
$(\omega,x)$ for which $x$ is in $A$  have  frequency of
$(\lambda,\delta)$-hyperbolic times greater than
    $\zeta>0$, there is
    $n_0\in\NN$ such that for every $x\in A$ and
     $n\ge n_0$ there are $(\lambda,\delta)$-hyperbolic times
    $0<n_1<n_2<\dots <n_\ell\le n$  for $x$ with $\ell \ge \zeta n$.
     Take $n\ge n_0$ and let $\xi_n$ be the measure in $I_n=\{1,\dots,n\}$
defined
    by $\xi_n(J)=\# J/n$, for each  $J\subset I_n$. Then, putting $\chi(x,i)=1$ if $x\in H_\w^i$, and $\chi(x,i)=0$ otherwise, by Fubini's
    Theorem
    \begin{eqnarray*}
        \frac{1}{n} \sum_{j=1}^{n}\leb(A\cap H_\w^j)
        & =& \int \left(\int_A \chi(x,i)\,d\leb(x)\right)d\xi_n(i) \\
        & = & \int_A \left(\int \chi(x,i)\,d\xi_n(i)\right)d\leb(x).
    \end{eqnarray*}
    Since for every $x\in A$ and
     $n\ge n_0$ there are $0<n_1<n_2<\dots <n_\ell\le n$ with $\ell \ge\zeta n$
     such that $x\in H_\w^{n_i}$ for $1\le i\le\ell$, then the
    integral with respect to $d\xi_n$ is larger than $\zeta>0$ and
    the last expression in the formula above is bounded from below by
    $\zeta \leb(A)$. \cqd

\cle \label{l.contr}  Given $0<\lambda<1$ and $\delta>0$, there is
$\delta_1 > 0$ such that if $n$ is a $(\lambda,\delta)$-hyperbolic
time for~$(\omega,x)\in T^\ZZ\times M$, then
\begin{equation*}
\label{e.littleloss} \|Df_\w(y)^{-1}\| \le \lambda^{-1/2}
\|Df_\w(x)^{-1}\|,
\end{equation*}
for any  point $y$ in the ball of radius $\delta_1 \lambda^{n/2}$
around $x$. \fle
\dem

If $\mathcal C=\emptyset$
and since $f_\w$ is a local diffeomorphism, for each
$x\in M$ there is a
radius  $\delta_x>0$ such that $f_\w$ sends a neighborhood of $x$
diffeomorphically onto $B(f(x),\delta_x)$, the ball of radius $\delta_x$
around~$f(x)$. By compactness of $M$ we may choose a uniform radius
$\delta_1>0$. We choose $\delta_1>0$ small enough so that also
\begin{equation*}%\label{defdelta1}
\|Df_\w(y)^{-1}\| \le \lambda^{-1/2} \|Df_\w (x)^{-1}\|
,
\end{equation*}
whenever $y\in B(x,\delta_1\lambda^{1/2})$ and $\w\in\supp(\theta_\ep^\ZZ)$.

In the case $\mathcal C\neq\emptyset$, if $n$ is a
$(\lambda,\delta)$-hyperbolic time for
$(\omega,x)$, then
 $$\dist_\delta(x,\cc) \ge
\lambda^{bn}.$$ According to the definition of the truncated
distance, this means that
$$
\dist(x,\cc)= \dist_\delta(x,\cc) \ge \lambda^{bn}, \quad\text{or
else}\quad \dist(x,\cc) \ge \delta.
$$
In either case, considering $2\delta_1<\delta$, we have for any
point $y$ in the ball of radius $\delta_1 \lambda^{n/2}$ around $x$
 $$
\dist(y,x)<\frac12\dist(x,\cc) ,$$%\quad\text{for any $1\le j < n$,}$$
because we haven chosen $b<1/2$ and $\delta_1 < \delta/2 < 1/2$
Therefore, we may use~(c$_2$) to conclude that
$$
\log\frac{\|Df(y)^{-1}\|}{\|Df(x)^{-1}\|} \le B \frac{\dist(y,x)}
           {\dist(x,\cc)^\beta}
\le B \frac{\delta_1 \lambda^{n/2}}
           {\min\{\lambda^{b\beta n},\delta^\beta\}}.
$$
Since $\delta>0,0<\lambda<1$ and we have taken $b\beta<1/2$, the term
on the right hand side is bounded by $B\delta_1 \delta^{-\beta}$.
Choosing  $\delta_1>0$ small so that $B\delta_1
\delta^{-\beta}<\log \lambda^{-1/2}$ we get the conclusion. \cqd

We assume that given $(\lambda, \delta)$ as before we fix
$\delta_1$ small so that Lemma~\ref{l.contr} holds. We further
require $\delta_1$ small so that the exponential map is an
isometry onto its image in the ball of radius $\delta_1$. This in
particular implies that any point in the boundary of a ball of
radius $\delta_1$ can be joined to the center of the ball through
a smooth curve of minimal length (a geodesic arc).

 \cpr\label{p.contr}
    If $n$ is
    $(\lambda,\delta)$-hyperbolic time for $(\omega,x)\in T^\ZZ\times M$,
    then there is a neighborhood $V^n_\omega(x)$ of $x$ in $M$ such that:
     \begin{enumerate}
     \item $f_{\omega}^n$ maps $V^n_\omega(x)$
    diffeomorphically onto $B(f_{\omega}^n(x),{\delta_1})$;
     \item\label{cond2} for every $y \in V^n_\omega(x)$ and $1\leq k\leq n$ we
     have $  \| Df_{\sigma^{n-k}(\w)}^k(f_\omega^{n-k}(y))^{-1} \| \le
\lambda^{k/2};$
\item for every $y,z\in V^n_\omega(x)$ and $1\leq k\leq n$
     $$ \dist(f_{\omega}^{n-k}(y),f_{\omega}^{n-k}(z)) \leq \lambda^{k/2}\dist(f_{\omega}^{n}(y),f_{\omega}^{n}(z)).
     $$
      \end{enumerate}

\fpr

\dem We shall prove the first two items by induction on $n$. Let
$n=1$ be a $(\lambda,\delta)$-hyperbolic time for
$(\omega,x)\in T^\ZZ\times M$. It follows from
Lemma~\ref{l.contr} and from the definition of hyperbolic times that for any $y$
in the ball $B(x,{\delta_1
\lambda^{1/2}})\subset M$ of radius $\delta_1 \lambda^{1/2}$
around~$x$
\begin{equation}\label{eq.dilat.preb}
   \|Df_\w(y)^{-1}\| \le \lambda^{-1/2} \|Df_\w(x)^{-1}\| \le \lambda^{1/2}.
\end{equation}
 This means that $f_\w$ is a
$\lambda^{-1/2}$-dilation in the ball $B(x,{\delta_1
\lambda^{1/2}})$. Then,  there exists some neighborhood
$V^1_\omega(x)$ of $x$ contained in $B(x,{\delta_1 \lambda^{1/2}})$
which is mapped diffeomorphically onto the ball
$B(f_\omega(x),{\delta_1})$ and for $y\in V^1_\omega(x)$ condition
\eqref{eq.dilat.preb} ensures the second property $\|Df_\w(y)^{-1}\| \le
\lambda^{1/2}$.

Assume now that the conclusion holds for $n\ge 1$. Let $n+1$ be a
$(\lambda,\delta)$-hyperbolic time for $(\omega,x)\in
T^\ZZ\times M$.
Take any $z\in \partial B(f_\omega^{n+1}(x),{\delta_1 })$, and let  $\gamma\colon [0,1]\to M$ be a smooth curve of minimal length joining $z$ to $f_\omega^{n+1} (x)$.
The curve $\gamma$ necessarily lies  inside $B(f_\omega^{n+1}(x),{\delta_1 })$ by the choice of $\delta_1$. %For definiteness we shall assume that all these curves are defined in the interval $[0,1]$.
Consider $\gamma_n$ and $\gamma_{n+1}$ smooth curves which are
lifts of $\gamma$ starting at $f_\omega(x)$ and $x$, respectively.
This means that
 $$ \gamma=f_{\sigma(\omega)}^{n}\circ \gamma_{n}\qand \gamma=
f_\omega^{n+1}\circ \gamma_{n+1},$$
at least in the domains where the lifts make sense. Since $n$ is a
$(\lambda,\delta)$-hyperbolic time for
$(\sigma(\omega),f_\omega(x))$, by induction hypothesis there is a
neighborhood $V_{\sigma(\omega)}^{n}(f_\omega(x))$ which is sent
diffeomorphically by $f_{\sigma(\omega)}^{n}$ onto the ball of
radius $\delta_1$ around $f_\omega^{n+1}(x)$ with the additional
second condition property. One has
that $\gamma_{n}$ lies inside $
V^{n}_{\sigma(\omega)}(f_\omega(x))$.

Moreover, $n-j$ is a $(\lambda,\delta)$-hyperbolic time
for
$(\sigma^{j+1}(\w), f_\w^{j+1}(x))$ and
$f_{\sigma(\w)}^j(V^{n}_{\sigma(\omega)}(f_\omega(x)))$ is
a neighborhood $V_{\sigma^{j+1}(\w)}^{n-j}$ of $f_{\w}^{j+1}(x)$ which is
mapped by $f_{\sigma^{j+1}(\w)}^{n-j}$ diffeomorphically onto
$B(f_\w^{n+1}(x),{\delta_1})$ and also satisfies the second
condition property, for $1\leq j \leq n$.

\smallskip

\noindent {\bf Claim.} {\em The curve $\gamma_{n+1}$ lies inside
the ball of radius $\delta_1 \lambda^{(n+1)/2}$ around $x$.}

\smallskip \noindent
Assume, by contradiction, that $\gamma_{n+1}$ hits the boundary of
$B(x,{\delta_1 \lambda^{(n+1)/2}})$ before the end time. Let
$0<t_0<1$ be the first moment in such conditions. One necessarily
has that $\gamma_{n+1}\vert [0,t_0]$ is a curve inside the ball
$B(x,{\delta_1 \lambda^{(n+1)/2}})$ joining $x$ to a point in the
boundary of that ball. Since $n+1-j$ is a $(\lambda,\delta)$-hyperbolic
time for $(\sigma^{j+1}(\w), f_\w^{j+1}(x))$, by Lemma \ref{l.contr} that for
each $0\le
t\le t_0$ and $0\leq j\leq n$
$$\|Df_{\sigma^j(\w)}(f_\w^j(\gamma_{n+1}(t)))^{-1}\|\leq\lambda^{-1/2}\|Df_{\sigma^j(\w)}
(f_\w^j(x))^{-1} \|$$
and
$f_\w^j(\gamma_{n+1}([0,t_0]))\subset
V_{\sigma^j(\omega)}^{n+1-j}(f_\omega^j(x)),$ which yields to
\begin{eqnarray}
% \nonumber to remove numbering (before each equation)
  \|Df_\w^{n+1}(\gamma_{n+1}(t))^{-1}\| &\le &
\prod_{j=0}^n\|Df_{\sigma^{n-j}(\w)}(\gamma_{j+1}(t))^{-1}\| \nonumber\\
    &=& \|Df_{\w}(\gamma_{n+1}(t))^{-1}\|
\cdot\prod_{j=0}^{n-1}\|Df_{\sigma^{n-j}(\w)}(\gamma_{j+1}(t))^{-1}\|
\nonumber\\
     &\le &
\lambda^{-1/2}\|Df_{\w}(x)^{-1}\|\cdot\prod_{j=0}^{n-1}\lambda^{-1/2}
\|Df_{\sigma^{n-j}(\w)}(f_\w^{n-j}
(x))^{-1}\| \nonumber\\
     &\le & \lambda^{-1/2}\lambda\lambda^{-n/2}\lambda^n\nonumber\\
     &= & \lambda^{(n+1)/2}.\label{eq.expa}
\end{eqnarray}
Hence
 \begin{eqnarray*}
 % \nonumber to remove numbering (before each equation)
   \int_0^{t_0}\|\gamma'(t)\|dt &=&
\int_0^{t_0}\|Df_\w^{n+1}(\gamma_{n+1}(t))\cdot\gamma_{n+1}'(t)\|dt \\
   &\ge &  \int_0^{t_0}\lambda^{-(n+1)/2}\|\gamma'_{n+1}(t)\|dt\\
   &=& \delta_1
 \end{eqnarray*}
This gives a contradiction since $t_0<1$, thus proving the claim.

\smallskip

Let us now  finish  the proof of the first two items. We simply consider the
lifts by $f_\omega^{n+1}$ of the geodesics joining
$f_\omega^{n+1}(x)$ to the points in the boundary  of $B(f_\omega^{n+1}(x),{\delta_1
})$. This defines a neighborhood $V^{n+1}_\w(x)$
of $x$ which by \eqref{eq.expa}~has the required properties.

For the third item, let $\gamma$ be a curve of minimal length connecting
$f_\w^n(z)$ to $f_\w^n(y)$. This curve $\gamma$ must obviously
be contained in $B(f_\w^n(x),{\delta_1})$. For $1\le k \le n$,
let~$\gamma_{k}$ be the (unique) curve in
$f_\w^{n-k}(V_\w^n(x))$ joining $f_\w^{n-k}(z)$ to $f_\w^{n-k}(y)$
such that $f_{\sigma^{n-k}(\w)}^{k} ( \gamma_{k})=\gamma$. We
have for every $n\ge 1$
\begin{eqnarray*}
% \nonumber to remove numbering (before each equation)
  \length(\gamma) &=& \int\|\gamma'(t)\|dt \\
    &=&  \int\|Df_{\sigma^{n-k}(\w)}^{k}( \gamma_{k}(t))\cdot
\gamma_{k}'(t)\|dt\\
    &\ge& \lambda^{-\frac{k}2}\int\| \gamma_{k}'(t)\|dt\\
    &=&\lambda^{-\frac{k}2} \length( \gamma_{k}).
\end{eqnarray*}
As a consequence,
$$
\dist(f_\w^{n-k}(y),f_\w^{n-k}(z))\le\length( \gamma_{k}) \le
\lambda^{\frac{k}2}\length(\gamma)
=\lambda^{\frac{k}2}\dist(f_\w^n(y),f_\w^n(z)).
$$\cqd

\cco[Bounded Distortion]
\label{co.distortion0}
  There exists $C_0>0$ such that if
 $n$ is a $(\lambda,\delta)$-hyperbolic time for $(\omega,x)\in
T^\ZZ\times M$, then for every $y, z\in V^n_\omega(x)$,
 $$ \log\frac{|\det Df_{\omega}^n (y)|}{|\det Df_{\omega}^n (z)|}
 \le C_0\dist(f_{\omega}^{n}(y),f_{\omega}^{n}(z)).
 $$
  \fco
\dem%[Proof of corollary \ref{co.distortion0}]
 If $\mathcal C=\emptyset$, since $f_\w\in C^2(M,M)$ then there is $C_0'>0$ such that
for all $z,y\in M$ and $\w\in\supp(\theta_\ep)$ we have $
 |\log
|\det Df_\w(z)|-\log |\det Df_\w(y)||\le C_0'\dist (z,y). $
And  for all $z,y\in V_n(x)$ we have
\begin{eqnarray*}
  \log\frac{|\det Df_w^n(z)|}{|\det Df_\w^n (y)|} &=& \sum_{j=0}^{n-1}\log\frac{|\det Df_{\sigma^j(\w)} (f_\w^j(z))|}{|\det Df_{\sigma^j(\w)}(f_\w^j (y))|} \\
  &\le& \sum_{j=0}^{n-1}C_0'\dist(f_\w^j(z),f_\w^j(y))\\
    &\le& \sum_{j=0}^{n-1}C_0'\lambda^{n-j}\dist(f_\w^n(z),f_\w^n(y)).
\end{eqnarray*}
It is then enough to take
$C_0=\exp\left(\sum_{k=0}^{\infty}C_0'\lambda^{k}\right)$.

If $\mathcal C$ is not empty then let $n$ be a
    $(\lambda,\delta)$-hyperbolic time for $(\omega,x)\in T^{\ZZ}\times M$
with associated hyperbolic
    pre-ball $V^n_\omega$. By Proposition~\ref{p.contr} we have   for
    each $y,z\in V_\w^n$ and each  $0\le k \leq n-1$
     $$\dist(f_{\omega}^k(y),f_{\omega}^k(z))\le \delta_1 \lambda^{(n-k)/2}.$$
     On the other hand, since $n$ is a hyperbolic time for $(\omega,x)$
  \begin{eqnarray}
  \dist (f_{\omega}^k(y),\cc)
  &\ge &\dist(f_{\omega}^k(x),\cc)-\dist(f_{\omega}^k(x),f^k(y))\nonumber\\
  &\ge &\lambda^{b(n-k)}-\delta_1 \lambda^{(n-k)/2}\nonumber\\
  &\ge & \frac12 \lambda^{b(n-k)}\label{in}\\
  &\ge &2\delta_1 \lambda^{(n-k)/2},\nonumber
  \end{eqnarray}
  as long as $\delta_1<1/4$; recall that $b< 1/2$. Thus we have
  $$\dist(f_{\omega}^k(y),f_{\omega}^k(z))\le \frac12\dist (f^k_{\omega}(y),\cc),$$
and so we may use (c$_3$) to obtain
 $$
\log\frac{|\det Df (f_{\omega}^k(y))|}{|\det Df(f_{\omega}^k
(z))|}\le
\frac{B}{\dist(f_{\omega}^k(y),\cc)^\beta}\dist(f_{\omega}^k(y),f_{\omega}^k(z)).
$$
Hence, by \eqref{in} and Preposition~\ref{p.contr}
\begin{eqnarray*}
  \log\frac{|\det Df_{\omega}^n (y)|}{|\det Df_{\omega}^n (z)|}
  &=& \sum_{k=0}^{n-1}\log\frac{|\det Df(f_{\omega}^k(y))|}{|\det Df(f_{\omega}^k(z))|} \\
  &\le & \sum_{k=0}^{n-1}2^\beta  B   \frac{\lambda^{(n-k)/2}}
                             {\lambda^{b\beta(n-k)}}\dist(f_\w^n(y),f^n_\w(z)).
\end{eqnarray*}
It suffices to take $C_0 \ge \sum_{k=1}^{{+\infty}} 2^\beta B
\lambda^{(1/2-b\beta)k}$; recall that $b\beta < 1/2$. \cqd

We shall often refer to the sets $ V^n_\omega$ as \emph{hyperbolic
pre-balls} and to their images $ f_\w^{n}(V^n_\omega) $ as
\emph{hyperbolic balls}.  Notice that the latter are indeed balls
of radius $ \delta_1>0 $.

%
%It follows from the proof of the previous
%proposition that for every $x$ belonging to a hyperbolic pre-ball
%$V^n_\omega$ associated to a $(\lambda,\delta)$-hyperbolic time $n$
%we have
% $\|Df_\w^n(x) ^{-1}\|\le \lambda^{n/2}.$
%

Many times along this text it will be useful to have the following
weaker forms of the previous corollary.

\cco\label{co.distortion1}
 There is a constant $C_1>0$ such that if
 $n$ is a $(\lambda,\delta)$-hyperbolic time for $(\omega,x)\in
T^\ZZ\times M$ and $y,z\in V^n_\omega(x)$, then
 $$
 \frac{1}{C_1}\leq \frac{|\det Df_{\omega}^n(y)|}{|\det Df_{\omega}^n(z)|} \leq C_1.
 $$
 \fco
  \dem
By Corollary \ref{co.distortion0} just have to consider
$C_1=\exp(C_0\delta_1)$.
 \cqd

\cco\label{co.distortion2}
There is a constant $C_2>0$ such that for any hyperbolic pre-ball $V_\w^n(x)$
and any $A_1, A_2\subset V_\w^n(x)$
$$\frac{1}{C_2}\frac{\leb(A_1)}{\leb(A_2)}\leq\frac{\leb(f_\w^n(A_1))}{
\leb(f_\w^n(A_2))}\leq{C_2}\frac{\leb(A_1)}{\leb(A_2)}.$$

\begin{proof}
By the change of variable formula for $f_\w^n$ we may write
\begin{eqnarray*}\frac{\leb(f_\w^n(A_1))}{
\leb(f_\w^n(A_2))}&=&\frac{\int_{A_1}|\det Df_\w^n(z)|d\leb(z)}{\int_{A_1}
|\det Df_\w^n(z)|d\leb(z)}\\
&=&\frac{|\det Df_\w^n(z_1)|\int_{A_1}\left|\frac{\det
Df_\w^n(z)}{\det Df_\w^n(z_1)}\right|d\leb(z)}{|\det
Df_\w^n(z_2)|\int_{A_2}\left|\frac{\det
Df_\w^n(z)}{\det Df_\w^n(z_2)}\right|d\leb(z)},
\end{eqnarray*}
with $z_1$ and $z_2$ choosen arbitrarily in $A_1$ and $A_2$, respectively. From
Corollary
\ref{co.distortion1} we get the desired bounds.
\end{proof}

\fco

\subsection{Transitivity and growing to large scale} We do not need
transitivity of $f$ in all its strength.
Before we tell
 what is the weaker form of transitivity  that is enough for our
purposes, let us refer that given $\delta>0$, a subset $A$ of $M$
is said to be {\em $\delta$-dense}\index{$\delta$-dense} if any
point in $M$ is at a distance smaller than $\delta$ from $A$. For
our purposes it is enough that there is some
point $p\in M$ whose pre-orbit does not hit the critical set of
$f$ and is $\delta$-dense for some sufficiently small $\delta>0$
(depending on the radius $\delta_1$ of hyperbolic balls for $f$). As the
lemma below shows, in our setting of non-uniformly expanding maps
this is a consequence of the usual transitivity of $f$.

\cle Let $f\colon M\to M$ be a transitive
non-uniformly expanding map. Given $\delta>0$ there is $p\in M$
and $N_0\in\NN$ such that $\cup_{j=0}^{N_0}f^{-j}\{p\}$ is
$\delta$-dense in $M$ and disjoint from the critical set $\cc$.
\fle

\dem See \cite{ALP05}, Lemma 2.5.
 \cqd

 Assuming that $f$ is non-uniformly expanding and non-uniformly expanding on
random orbits, then by
Proposition~\ref{pr.hyperbolic1} there are $\lambda$, $\delta$ and
$\zeta$ such that Lebesgue almost every $x\in M$ has frequency of
$(\lambda,\delta)$-hyperbolic times greater than $\zeta$.
We fix once
and for all $ p \in M $ and $N_0\in\NN$ for which
\begin{equation*}%\label{defN0}
\cup_{j=0}^{N_0}f^{-j}\{p\}\quad\text{is $\delta_1/3$-dense in $ M
$ and disjoint from $ \cc $,}
 \end{equation*}
  where
$\delta_1>0$ is the radius of hyperbolic balls as for
Proposition~\ref{p.contr}. Take constants $\alpha>0$ and
$\delta_0>0$ so that
 $$
 2\sqrt\delta_0 \ll \delta_1\qand 0< \alpha \ll \delta_0
 .
 $$
\begin{Lemma}\label{le:grow2}
There are constants $ K_0,  D_{0} %,K_{0}
>0$ depending only on $ f$, $\lambda$, $\delta_1 $ and the point $
p$ such that, if $\ep>0$ is sufficiently small, then for any ball $
B\subset M $ of radius $ \delta_1 $ and every $\omega\in
\supp(\theta_\ep^\ZZ)$
%(in particular for any hyperbolic ball),
there are an open set $ A\subset B $ and an integer $
0\leq m \leq
 N_{0} $ for which:
   \begin{enumerate}
   \item $ f_{\omega}^{m} $ maps $A$ diffeomorphically onto
$B(p,2\sqrt\delta_{0}) $;
   \item
   for each $x,y\in A$
   $$
\log\left|\frac{\det Df_\w^{m}(x)}{\det Df_\w^{m}(y)}\right|
 \le
D_0 \dist(f_{\omega}^{m}(x),f_{\omega}^{m}(y));
$$
\end{enumerate}
and, moreover, for each  $ 0\leq j \leq
 N_{0} $  the $j$-preimages
$(f_{\omega}^{j})^{-1}B(p,2\sqrt\delta_{0})$
       are all disjoint from $\cc$, and
  for $x$
       belonging to any such $j$-preimage we have
  $$
 \frac1{K_0}\le \|Df_{\omega}^j(x)\|%, |\det Df^{m}(x)|
 \le K_0.
 $$
\end{Lemma}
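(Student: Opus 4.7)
The plan is to combine the $\delta_1/3$-density of the pre-orbit of $p$ with a perturbation argument around the deterministic picture. Since $B$ has radius $\delta_1>\delta_1/3$, the density hypothesis gives some $q\in B$ and $0\le m\le N_0$ with $f^m(q)=p$. The finite orbit $q,f(q),\dots,f^{m-1}(q)$ sits inside $\bigcup_{j=0}^{N_0}f^{-j}\{p\}$ and is therefore uniformly bounded away from $\cc$; combined with continuity of $f$ and $m\le N_0$, this lets me fix a neighborhood $\tilde U$ of $q$ and a radius $r_0$ strictly larger than $2\sqrt{\delta_0}$ (possible because $2\sqrt{\delta_0}\ll\delta_1$) such that $f^m|_{\tilde U}$ is a diffeomorphism onto $B(p,r_0)$, each iterate $f^j(\tilde U)$ (for $0\le j\le m$) stays in a fixed neighborhood of $f^j(q)$ disjoint from $\cc$, and $\|Df^j(x)\|^{\pm 1}$ is uniformly bounded on $\tilde U$ for $0\le j\le N_0$. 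The constants here depend only on $f$, $p$, $N_0$, $\delta_1$.

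Next I would transfer this to the random map. Since $\supp(\theta_\ep)\to\{t^*\}$ as $\ep\to0$ and $Df_t=Df$ on $M\setminus\cc$ by \eqref{e.perturbation}, the maps $f_\omega^j$ converge to $f^j$ uniformly on $\tilde U$ as $\ep\to0$, uniformly in $\omega\in\supp(\theta_\ep^\ZZ)$ and $0\le j\le N_0$. For $\ep$ sufficiently small, $f_\omega^m$ remains a diffeomorphism on a slight shrinking of $\tilde U$, and in particular the connected component $A\subset\tilde U\cap B$ of $(f_\omega^m)^{-1}B(p,2\sqrt{\delta_0})$ containing a point near $q$ is mapped diffeomorphically onto $B(p,2\sqrt{\delta_0})$; this is item (1). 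Likewise, each $j$-preimage $(f_\omega^j)^{-1}B(p,2\sqrt{\delta_0})$ sits in an arbitrarily small neighborhood of the corresponding preimage of $p$ under $f$, hence stays disjoint from $\cc$ with $\|Df_\omega^j(x)\|^{\pm1}\le K_0$ uniformly in $\omega$, giving item (3).

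For item (2) I would use condition (c$_3$) of the non-degenerate critical set: since the points $f_\omega^j(x)$ for $x\in A$ stay uniformly far from $\cc$, the function $\log|\det Df_\omega|=\log|\det Df|$ is Lipschitz with some uniform constant $L$ on a neighborhood of their orbits. The chain rule together with the bound $\dist(f_\omega^j(x),f_\omega^j(y))\le K_0^{\,m-j}\dist(f_\omega^m(x),f_\omega^m(y))$ coming from item (3) yields
\[
\log\left|\frac{\det Df_\omega^m(x)}{\det Df_\omega^m(y)}\right|\le\sum_{j=0}^{m-1}L\,\dist(f_\omega^j(x),f_\omega^j(y))\le D_0\,\dist(f_\omega^m(x),f_\omega^m(y)),
\]
with $D_0=L\sum_{j=0}^{N_0-1}K_0^{N_0-j}$, depending only on $f,\lambda,\delta_1,p$.

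The main obstacle is the uniformity of all constants in $\omega\in\supp(\theta_\ep^\ZZ)$ and in the ball $B$. Uniformity in $B$ is automatic because $B$ enters only through the choice of $q$ and $m$, and these range over the finite set $\bigcup_{j=0}^{N_0}f^{-j}\{p\}$, producing finitely many cases to control. Uniformity in $\omega$ is obtained by choosing $\ep$ small enough so that every admissible perturbation lies in an arbitrarily small $C^1$-neighborhood of $f$, where the finitely many continuity and non-degeneracy statements used above are all simultaneously robust.
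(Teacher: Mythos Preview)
Your proof is correct and follows essentially the same route as the paper's: use the $\delta_1/3$-density of $\bigcup_{j=0}^{N_0}f^{-j}\{p\}$ and its disjointness from $\cc$ to place a preimage $q$ of $p$ inside $B$, observe that finitely many iterates staying uniformly away from $\cc$ give uniform derivative and distortion control, and then use continuity of $t\mapsto f_t$ together with the finiteness of $N_0$ to make all estimates robust under small $\ep$. Your treatment of item~(2) via condition~(c$_3$) and the chain rule is in fact more explicit than the paper's, which simply notes that ``the volume distortion is uniformly bounded'' without writing out the telescoping sum; and your observation that uniformity in $B$ reduces to finitely many cases (one per point of the finite set $\bigcup_{j\le N_0}f^{-j}\{p\}$) is the right way to close the argument.
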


\begin{proof}
Since $\cup_{j=0}^{N_0}f^{-j}\{p\}$ is $ \delta_1/3 $ dense in $ M
$ and disjoint from $\cc$, choosing $ \delta_{0}>0 $ sufficiently
small we have that each connected component of the $j$-preimages
 $f^{-j}B(p, 2\sqrt\delta_{0})$,
 with $j\leq N_{0} $, are bounded away
from the critical set $ \cc $ and  contained in a ball of
radius $ \delta_1/3 $. {Moreover, since we are dealing with a finite
number of iterates, less than $N_0$,
and $f_t$ varies
continuously with parameter $t$, if $\ep$ is sufficiently
small then for every $\omega\in
\supp(\theta_\ep^\ZZ)$,
each connected component of the $j$-preimages
$(f^j_{\omega})^{-1}B(p, 2\sqrt\delta_{0})$,
 with $j\leq N_{0}$, is  uniformly (on $j$ and $\w$) bounded away
from the critical set $ \cc $  and  contained in a ball of
radius $ \delta_1/3 $.}
This immediately implies that for $\w\in\supp(\theta_\ep^\ZZ)$, any ball
$ B \subset M $ of radius $ \delta_1 $ contains a $m$-preimage $
A $ of $ B(p, 2\sqrt\delta_{0}) $ which is mapped
diffeomorphically by $f^m_{\omega}$ onto $ B(p, 2\sqrt\delta_{0})
$ for some $m\leq N_{0}$. Since the number of
iterations and the distance to the critical region are uniformly
bounded, the volume distortion is uniformly bounded and moreover there is some constant
$K_0>1$ %not depending only on $f$ and $\delta_1$
such that for every $\w\in\supp(\theta_\ep^\ZZ)$
 $$
 \frac1{K_0}\le \|Df_{\omega}^m(x)\|%, |\det Df^{m}(x)|
 \le K_0.
 $$
 for all $1\le m\le N_0$ and $x$ belonging to a $m$-preimage of $B(p,
2\sqrt\delta_{0})$ by $f^m_\omega$.
\end{proof}

Next we prove a useful consequence of the
existence of hyperbolic times, namely that if we start with some fixed given $
\alpha
> 0 $ then there exist some $ N_{\alpha} $ depending only
on $ \alpha $ such that, for $\w\in\Omega_\ep$, any ball on $M$ of
radius $ \alpha $ has some subset
which grows to a fixed size with bounded distortion within $
N_{\alpha} $ iterates.

\cle\label{le:grow1}
     Given $\alpha>0$ there exists $  N_{\alpha}
    > 0 $    such that if $\ep$ is
sufficiently small, then for every $\omega\in \Omega_\ep$ we have that any
ball $ B
    \subset M $ of radius $\alpha$ contains a hyperbolic pre-ball $
    V^{n}_\omega\subset B $ with $ n\leq  N_{\alpha} $.
\fle

\begin{proof}
    Take any $ \alpha > 0 $ and a ball
$B(z,\alpha)$. By
     Proposition~\ref{p.contr} we may choose
$n_{\alpha}\in\NN$
    large enough so that any hyperbolic pre-ball $ V^n_\omega $ associated to
    a
    hyperbolic time $ n\geq  n_{\alpha} $ will have diameter not exceeding
     $\alpha/2$.  Now
    notice that by Proposition~\ref{pr.hyperbolic1} for Lebesgue almost every
$x\in M$, the point $~(\omega^*,x)$ has an infinite
number of hyperbolic times and
    therefore
    $$
    \leb\left(M\setminus \mcup_{j= n_{\alpha}}^{n}H_{\w^*}^{j}\right) \to 0
    \quad\text{ as } n\to{+\infty}.
    $$
    Hence, it is possible to choose
$N_{\alpha}\in \NN$ such
    that
   \begin{equation}\label{Neps}
    \leb\left(M\setminus \mcup_{j=
    n_{\alpha}}^{N_\alpha}H_ {\w^*}^{j}\right)
   < m(B(z, \alpha/2)).
    \end{equation}
Observe that if $n$ is a hyperbolic
time for $(\omega^*,x)$ and $\ep$ is small enough, then for every
$\omega\in \Omega_\ep$ the natural $n$ is also a hyperbolic time for
$(\omega,x)$. Hence, if $\ep$ is small enough, for given
$\alpha>0$ we can take an integer $N_\alpha$, only depending on
$\alpha$,
$\lambda$ and $\delta_1$, such that  \eqref{Neps} holds for every
$\w\in\Omega_\ep$ in the place of $\w^*$.
 This ensures that, for every $\w\in\Omega_\ep$, there is
    a point $ \hat x \in B(z, \alpha/2) $ with a hyperbolic time
    $ n\leq  N_{\alpha} $ and associated hyperbolic pre-ball $
    V^{n}_\omega(x)$ contained in $ B(z, \alpha) $.\end{proof}

\subsection{The partitioning algorithm}\label{s.algoritmo}
We describe now the construction of the  partition
$\mathcal{P}_\omega$ (mod 0) of $ \Delta_{0}=B(p,\delta_{0}) $,
for every $\omega\in \Omega_\ep$. The basic intuition is that we
wait for some iterate $ f_{\omega}^{k}(\Delta_{0}) $ to cover $
\Delta_{0} $ completely, and then define the subset $ U
\subset \Delta_{0} $, for which $ f_{\omega}^{k}: U\to
\Delta_{0} $ is a diffeomorphism, as an element with return time $ k $ for the partitions corresponding
 to all
elements
$\w'\in\Omega_\ep$  with same first $k$ coordinates as $\w$: $\w_0'=\w_0,
\ldots,\w'_{k-1}=\w_{k-1}$.
After that, we
continue to iterate the complement $
\Delta_{0}\setminus U $ until this complement covers again
$ \Delta_{0} $ and repeat the same procedure to define more
elements of the final partition with higher return times.  Using
the fact that small regions eventually become large due to the
expansivity condition, it follows that this process can be
continued and that Lebesgue almost every point eventually belongs
to some element of the partition. Moreover, the return time
function depends on the time that it takes small regions to
become large on average and this
turns out to depend precisely on the measure of the tail set. On the other hand, this process avoids undesirable
randomness on the
{\em choice} of elements
for distinct (but related) partitions of the induced regions. In particular,
 for different realizations
with similar initial nonnegative
coordinates, the elements in corresponding partitions with return times lower
than the number of similar entries are the same, as subsets of $\Delta_0$.
\\

Now we introduce
 neighborhoods of $ p $
$$\Delta_{0}=%\Delta_0=
B(p,\delta_{0}),\quad \Delta^{1}_{0}=B(p,2\delta_{0}),\quad
\Delta^{2}_{0}=B(p,\sqrt\delta_{0})\qand
\Delta^{3}_{0}=B(p,2\sqrt\delta_{0}).$$ For $0<\lambda<1$ given by
Proposition~\ref{pr.hyperbolic1}, let
$$
I_{s}=\left\{x\in\Delta^{1}_{0}\: : \:\delta_{0}(1+\lambda^{s/2}) <
\dist(x,p) < \delta_{0}(1+\lambda^{(s-1)/2})\right\},\quad  s\ge 1,
$$
be a partition (mod 0) into countably many rings of $
\Delta_0^{1}\setminus \Delta_0 $.

The construction of the partition $\mathcal{P}_{\omega}$ of
$\Delta_0$ is inductive and we give the initial and the general
step of the induction.  For the sake of a better visualization of
the process, and to motivate the definitions, we start with the
first step.
 Define $$[\w]_k=\{\tau\in \Omega_\ep:\omega_0=\tau_0, \ldots,
\omega_{k-1}=\tau_{k-1}\}.$$
The set $H_\tau^k$ is the same for any $\tau\in[\w]_k$ and we will
refer to this set as $H_{[\w]}^k$.
\smallskip
\subsubsection*{First step of the induction}
 Take $ R_{0} $ some large integer to be determined in Section
 \ref{sub.ebd} (can
 be taken independent of $\omega$); we ignore any dynamics
occurring up to time $ R_{0} $. For $\w\in\Omega_\ep$, let $ k\geq R_{0}+1 $
be the first
time that $ \Delta_0\cap H^{k}_{[\w]}\neq\emptyset $. For $ j
< k $ we define formally the objects $
\Lambda_\w^j,
A_\w^{j},
A_\w^{j, \alpha}$, whose meaning will become clear in the next paragraphs,
by
$
A_\w^{j}=A_\w^{j,\alpha}=\Lambda_\w^j=\Delta_{0} $.

Let
$(U_{k,j}^3)_j$ be the
connected components of
 $
 A_\w^{k-1,\alpha}\cap(f_{w}^{k})^{-1}(\Delta_0^3)
 $
 contained in hyperbolic pre-balls $V^{k-m}_{\w}$, with $k- N_0\le
 m\le k$. This hyperbolic pre-balls $V^{k-m}_{\w}$ growth in
$k-m$ iterates to a
hyperbolic ball of radius $\delta_1$ which
$f_{\sigma^{k-m}(\w)}^m$ maps diffeomorphically onto
$\Delta_0^3$.
Now let
$$
U_{k,j}^i=U_{k,j}^3\cap
(f_{\w}^{k})^{-1}(\Delta_0^i),\quad i=0,1,2,\quad\text{where }
\Delta_0^0=\Delta_0,
$$
and set $R_{\omega}(x)=k$ for $x\in U_{k,j}^0$. Take
$$
\Lambda_w^{k}=\Lambda_w^{k-1}\setminus \{R_{\w}=k\}.
$$

We define also a function $ t_\w^{k}:\Lambda_w^{k}\to
\mathbb N $ by
\begin{equation*}
    t_\w^{k}(x) =
    \begin{cases}
    s & \text{ if }  x\in U_{k,j}^{1} \text{ and }
    f_{\w}^{k}(x) \in I_{s} \text{  for some $j$;} \\
    0 & \text{ otherwise}.
    \end{cases}
\end{equation*}
Let
$$
A_\w^{k}= \{x\in\Lambda_w^{k}: t_\w^{k}(x) = 0\}, \quad B_\w^{k}=
\{x\in\Lambda_w^{k}: t_\w^{k}(x) > 0\}.
$$
We also define:
 $$
 A_\w^{k,\alpha}=\Lambda_w^{k}\cap \bigcup_{x\in A_\w^k\cap H_\w^{k+1}}
 (f_{\omega}^{k+1}\vert_{V^{k+1}_\w(x)})^{-1}B(f_\w^{k+1}(x),\alpha)$$

For all $\tau\in[\w]_k$ and $j\leq k$, we define the objects $A_\tau^j,
B_\tau^j, \Lambda_\tau^k, \{R_\tau=j\}$, $t_\tau^k$ to be the same as the
corresponding ones  as before. Moreover, for $\tau\in[\w]_{k+1}$ we also define
$A_{\tau}^{k, \alpha}$ to be the same as $A_{\w}^{k, \alpha}$.

\smallskip
\subsubsection*{General step of the induction}
The general inductive step of the construction now follows by
repeating the arguments above with minor modifications.  More
precisely we assume that the
sets $\Lambda_\tau^{i}$, $A_\tau^{i}$,
 $B_\tau^{i}$, $\{R_{\tau}=i\}$ and functions $ t_\tau^{i}:
\Lambda_\tau^{i}\to\mathbb N $ are defined for all $ i\leq n-1 $ and are
exactly the same for every $\tau\in[\w]_{n-1}$. We also have defined
$A_\tau^{i,\alpha}$, for $i\leq n-1$ to be the same set for all
$\tau\in[\w]_{n}$. For $ i\leq
R_{0} $ we just let $ A_\w^{i}=A_\w^{i,\alpha}=\Lambda_w^{i}= \Delta_{0}$, $
B_\w^{i}=\{R_{\omega}=i\}=\emptyset $ and $ t_\w^{i}\equiv 0 $. Now, let
$(U_{n,j}^3)_j$ be the connected
components of
 $
 A_\w^{n-1,\alpha}\cap(f_{\omega}^n)^{-1}(\Delta_0)
 $
 contained in hyperbolic pre-balls $V^{r}_\omega$, with $n-  N_0\le
 r\le n$, which are mapped onto $\Delta_0^3$ by $f_{\omega}^n$.
 Take
$$
U_{n,j}^i=U_{n,j}^3\cap (f_{\omega}^{n})^{-1}(\Delta_0^i),\quad i=0,1,2,
$$
and set $R_{\omega}(x)=n$ for $x\in U_{n,j}^0$. Take also
$$
\Lambda_w^{n}=\Lambda_w^{n-1}\setminus \{R_{\omega}=n\}.
$$
The definition of the function $ t_\w^{n}:\Lambda_w^{n}\to \mathbb N $
is slightly different in the general case:
\begin{equation*}
    t_\w^{n}(x) =
    \begin{cases}
    s & \text{ if } x\in U_{n,j}^{1}\setminus U_{n,j}^{0} \text{ and }
    f_{\omega}^{n}(x) \in I_{s} \text{ for some $j$,} \\
    0 & \text{ if } x\in A_\w^{n-1} \setminus \cup_{j} U^{1}_{n,j},\\
    t_\w^{n-1}(x)-1 & \text{ if } x\in B_\w^{n-1}\setminus \cup_{j} U^{1}_{n,j}.
    \end{cases}
\end{equation*}
Finally let
$$
A_\w^{n}= \{x\in\Lambda_w^{n}: t_\w^{n}(x) = 0\}, \quad B_\w^{n}=
\{x\in\Lambda_w^{n}: t_\w^{n}(x) > 0\}.
$$
and
 $$
 A_\w^{n,\alpha}=\Lambda_w^{n}\cap \bigcup_{x\in A_\w^n\cap H^{n+1}_\w}
 (f_{\omega}^{n+1}\vert_{V^{n+1}_\w(x)})^{-1}B(f_\w^{n+1}(x),\alpha)$$
Once more, for all $\tau\in[\w]_n$  we define the objects $A_\tau^n,
B_\tau^n, \Lambda_\tau^n, \{R_\tau=n\}$, $t_\tau^n$ to be,
respectively, $A_\w^n,
B_\w^n, \Lambda_\w^n, \{R_\w=n\}$, $t_\w^n$ and for $\tau\in[\w]_{n+1}$
we also define
$A_{\tau}^{n, \alpha}$ as $A_{\w}^{n, \alpha}$.

\medskip

\cre\label{r.claim}
    Associated to each component $U^0_{n-k}$ of
    $\{R_\omega=n-k\}$, for some $k>0$, we have a collar $U^1_{n-k}\setminus
    U^0_{n-k}$ around it; knowing that the new components of $\{R_\omega=n\}$
    do not intersect ``too much" $U^1_{n-k}\setminus U^0_{n-k}$ is
    important for preventing overlaps on sets of the partition. By Lemma 4.5 in \cite{A04} it is enough to consider
     \begin{equation*}%\label{eq.vare}
     \alpha<K_0^{-1}\lambda^{N_0/2}\delta_{0} (\lambda^{-1/2}-1)
     \end{equation*} so that
 $U_{n}^1\cap\{t_\w^{n-1}>1\}=\emptyset$ for each component $U^1_n$.
 \fre

  \subsection{Expansion, bounded distortion and uniformity}\label{sub.ebd}
The inductive construction we detailed before provides a family of topological balls
 contained in $ \Delta_0 $ which, as we will see, define a Lebesgue modulo zero partition
$\mathcal{P}_\w$ of $\Delta_0$. We start however, by checking conditions $(1)$-$(3)$ in
the definition of the induced piecewise expanding Gibbs-Markov map in view to prove Theorem~\ref{random Markov
structure}.

Recall that by construction, the return time $ R_\omega $ for an
element $ U $ of the partition $ \mathcal P_\omega $ of $ \Delta_0
$ is formed by a certain number $ n $ of iterations given by the
hyperbolic time of a hyperbolic pre-ball $ V^n_\omega\supset U $,
and a certain number $ m\leq N_{0} $ of additional iterates which
is the time it takes to go from $ f_\omega^{n}(V^{n}_\omega)$,
which could be anywhere in $ M $, to $
f_\omega^{n+m}(V^{n}_\omega) $ which covers $ \Delta_0 $
completely. The map $F_\w=f_\w^{R_w}:\Delta\to\Delta$ is indeed a $C^2$
diffeomorphism from each component $U$ onto $\Delta$.

It follows from
Proposition~\ref{p.contr} and
Lemma~\ref{le:grow2} that
$$
\|Df_\omega^{n+m}(x)^{-1}\|\le
\|Df_{\sigma^n(\omega)}^{m}(f^n_\w(x))^{-1}\|\cdot\|Df_\omega^{n}(x)^{-1}
\|<K_0\lambda^{n/2}\le
K_0\lambda^{(R_0-N_0)/2}.
$$
By taking $ R_{0} $ sufficiently large we can make this last
expression smaller than some $\kappa_\w$, with $0<\kappa_\w<1$. Since $K_0$ and
$N_0$ are independent
of $\omega$ then $R_0$ (and hence $k_w$) can be the same for all
$\omega\in\Omega_\ep$, proving part of property $(U2)$.

For the  bounded distortion estimate
we need to show that there exists a constant $ K_\w> 0 $ such that
for any $ x, y $ belonging to an element $ U_\omega\in\mathcal
P_\omega $ with return time $ R_\omega $, we have
$$
%\log\left|\frac{\det DF(x)}{\det DF(y)}\right|=
\log\left|\frac{\det Df_\omega^{R_\omega}(x)}{\det
Df_\omega^{R_\omega}(y)}\right| \leq K_\w
\dist(f_\omega^{R_\omega}(x), f_\omega^{R_\omega}(y)).
$$
 By the chain rule
    \begin{align*}
\log\left|\frac{\det Df_\omega^{R_\omega}(x)}{\det
Df_\omega^{R_\omega}(y)}\right| & = \log\left|\frac{\det
Df_{\sigma^n(\omega)}^{R_\omega-n}(f_\omega^n(x))}{\det
Df_{\sigma^n(\omega)}^{R_\omega-n}(f_\omega^n(y))}\right| +
\log\left|\frac{\det Df_\omega^{n}(x)}{\det
Df_\omega^{n}(y)}\right|.
    \end{align*}

For the first term in this last sum we observe that by
Lemma~\ref{le:grow2} we have
$$
\log\left|\frac{\det
Df_{\sigma^n(\omega)}^{R_\omega-n}(f_\omega^n(x))}{\det
Df_{\sigma^n(\omega)}^{R_\omega-n}(f_\omega^n(y))}\right|
 \le
D_0 \dist(f_\omega^{R_\omega}(x),f_\omega^{R_\omega}(y)).
$$
For the second term in the sum above, we may apply
Corollary~\ref{co.distortion0} and obtain
 $$
\log\left|\frac{\det Df_\omega^{n}(x)}{\det
Df_\omega^{n}(y)}\right| \le
 C_0\dist(f_\omega^{n}(x),f_\omega^{n}(y))
 .$$
Also by Lemma~\ref{le:grow2} we may write
 $$\dist(f_\omega^{n}(x),f_\omega^{n}(y))\le K_0 \dist(f_\omega^{R_\omega}(x),f_\omega^{R_\omega}(y)).$$
We just have to take $K_\w=D_0+C_0K_0$ which, clearly, can be
uniformly chosen on $w$, completing property $(U2)$.

For the uniformity condition (U1), given $N>1$ and ${\varrho}>0$  we define, for every
$\w\in\Omega_\ep$, the sets
 $\{R_{\w}=j\}$, $A_j^\w$ and $B_j^\w$, with $j\leq
N$, as described in
 Section \ref{s.algoritmo}. The process that leads to the
 construction of these sets is based on the fact that small domains
 in $\Delta_0$ became large (in balls of radius $\delta_1$) by
 $f_{\w}^k$, for some $0\leq k\leq N_\alpha$, and then by
 $f^i_{\sigma^k(\w)}$, with $0\leq i\leq N_0$, they cover
 completely the ball $B(p,2\sqrt{\delta_0})\supset\Delta_0$.
 Hence, just by the continuity of $\Phi$, associated to the random perturbation
$\{\Phi,\{\theta_\ep\}_{\ep>0}\}$, we
 guarantee that, for any two realizations $\w$, ${\w'}$ in
 $\Omega_\ep$ the Lebesgue measure of the symmetric difference of respective
sets $\{R_{\w}=j\}$, $A_\w^j$ and $B_j$, for
 $j\leq N$, is smaller than ${\varrho}$, as long as we take $\ep$ sufficiently
 small. In particular, this holds for
 $\{R_{\sigma^{-j}(\w)}=j\}$ and $\{R_{\sigma^{-j}({\w'})}=j\}$, for
every
 $j=1,2,\ldots,N$.

\subsection{Metric estimates}
We compute now some estimates to show that the algorithm before
indeed
 produces a partition (Lebesgue mod 0) of $ \Delta_0 $.

\subsubsection{Estimates obtained from the construction}
In this first part we obtain some estimates relating the Lebesgue
measure of the sets $A_\w^n$, $B_\w^n$ and $\{R_\omega>n\}$ with the
help of specific information extracted from the inductive
construction we performed in Section~\ref{s.algoritmo}.

 \cle\label{l.flowb} There exists a constant $c_0>0$ (not depending on $\delta_0$)
  such that for every $\omega\in\Omega_\ep$ and
$n\ge1$  $$\leb(B_\w^{n-1}\cap A_\w^n)\ge
c_0\leb(B_\w^{n-1}).$$
 \fle

\dem
It is enough to see that this holds for each connected component
of $B_\w^{n-1}$ at a time. Let  $C$ be a component of $B_\w^{n-1}$ and
$Q$ be  its outer ring corresponding to $t_\w^{n-1}=1$. Observe that
by Remark~\ref{r.claim} we have $Q\subset C\cap A_\w^n$.
Moreover, there must be some $k<n$ and a component $U^0_k$ of
$\{R_\omega=k\}$ such that $f_\omega^k$ maps $C$
diffeomorphically onto $\cup_{i=k}^{+\infty} I_i$ and $Q$ onto
$I_k$, both with distortion bounded by $C_1$ and $e^{D_0D}$,
where $D$ is the diameter of $M$; cf. Corollary~\ref{co.distortion1} and
Lemma~\ref{le:grow2}. Thus, it is sufficient to compare the
Lebesgue measures of $\cup_{i=k}^{+\infty} I_i$ and $ I_k$. We have
 $$
 \frac{\leb( I_k)}{\leb(\cup_{i=k}^{+\infty}
I_i)}\thickapprox\frac{
[\delta_0(1+\lambda^{(k-1)/2})]^d-[\delta_0(1+\lambda^{k/2})]^d}
{[\delta_0(1+\lambda^{(k-1)/2})]^d-\delta_0^d}\thickapprox
1-\lambda^{1/2}.
 $$
 Clearly this proportion does not depend on  $\delta_0$ neither on $\omega$.
  \cqd

  \cle\label{l.flowa}   There exist $d_0,r_0>0$  with $d_0+r_0<1$
  such that for every $\omega\in\Omega_\ep$ and $n\ge1$
\begin{enumerate}
 \item $\leb(A_\w^{n-1}\cap B_\w^n)\le d_0\leb(A_\w^{n-1})$;
 \item $\leb(A_\w^{n-1}\cap \{R_\omega=n\})\le r_0\leb(A_\w^{n-1})$.
\end{enumerate}
Moreover $d_0\to 0$ and $r_0\to 0$  as $\delta_0\to 0$.
 \fle

  \dem It is enough to prove these estimates for each neighborhood
of a component $U^0_n$ of $\{R_\omega=n\}$. Observe that by
construction we have $U^3_n\subset A_\w^{n-1,\alpha}$, which means
that $U^2_n \subset A_\w^{n-1}$, because
$\alpha<\delta_0<\sqrt\delta_0$. Using the  distortion bounds of
$f_\omega^n$ on $U^3_n$ given by Corollary~\ref{co.distortion2} and
Lemma~\ref{le:grow2}  we obtain
 $$
 \frac{\leb(U^1_n\setminus U^0_n)}{\leb(U^2_n\setminus U^1_n)}
 \thickapprox
 \frac{\leb(\Delta^1_0\setminus \Delta_0)}{\leb(\Delta^2_0\setminus \Delta^1_0)}
 \thickapprox
 \frac{\delta_0^d}{\delta_0^{d/2}}\ll 1,
 $$
which gives the first estimate. Moreover,
$$
 \frac{\leb( U^0_n)}{\leb(U^2_n\setminus U^1_n)}
 \thickapprox
 \frac{\leb( \Delta_0)}{\leb(\Delta^2_0\setminus \Delta^1_0)}
 \thickapprox
 \frac{\delta_0^d}{\delta_0^{d/2}}\ll 1,
 $$
and this gives the second one.
 \cqd

We state a Lemma useful to prove the following proposition.

\cle\label{auxestimates} There exists $L$, depending only on
the manifold $M$, such that for every finite Borel measure
$\vartheta$ and every measurable subset $G \subset M$ with
compact closure there is a finite subset $I\subset G $ such
that the balls $B(z,\frac{\delta_1}4)$ around the points $z\in
I$are pairwise disjoint, and
$$\sum_{z\in
I}\vartheta\left(B\left(z,\frac{\delta_1}4\right)\cap G \right)\geq
L\vartheta(G )$$
\fle

\begin{proof} See \cite{A04}, Lemma 4.9.
\end{proof}

The next proposition asserts that a fixed proportion of
$A_\w^{n-1}\cap H^n_\omega$ gives rise to new elements of the
partition within a finite number of steps (not depending on $n$). We state first an auxiliary result.

\cpr\label{p.construction}
    There exist
$s_0>0$ and a positive integer $N=N(\alpha)$ such
    that for every $\omega\in\Omega_\ep$  and $ n\ge1$
    $$
     \leb\left(\bigcup_{i=0}^N\big\{R_\omega=n+i\big\}\right)\ge s_0
\leb(A_\w^{n-1}\cap H^{n}_\w).
    $$
    \fpr
\dem We use Lemma \ref{auxestimates} with
$G =f_\w^n(A_\w^{n-1}\cap H^n_\w)$ and $\vartheta=(f_\w^n)_*m$,
thus obtaining a finite subset $I$ of points $z\in
f_\w^n(A_\w^{n-1}\cap H^n_\w)$ for which the conclusion of the lemma
in particular implies
\begin{equation}\label{proportion}
\sum_{z\in I} m\left(\left((f_\w^n)^{-1}B\left(
z,\frac{\delta_1}{4}\right)\right)\cap A_\w^{n-1}\cap
H^n_\w\right)\geq L m(A_\w^{n-1}\cap H^n_\w).
\end{equation}

Fix now $z\in I$. Consider $\{C_j\}_j$ the set of connected
components of $(f_\w^n)^{-1}B(z,\delta_1/4)$, which intersect
$A_\w^{n-1}\cap H^n_\w$. Note that each $C_j$ is contained in a
hyperbolic preball $V^n_\w(x_j)$ associated to some point
$x_j\in((f_\w^n)^{-1}B(z,\delta_1/4))\cap A_\w^{n-1}\cap H^n_\w$ as
in Proposition~\ref{p.contr}. In what
follows, given $A\subset
B(z_j,\frac{\delta_1}4)$, we will simply denote
 $(f_\omega^{n}\vert_{V^n_\omega(x_j)})^{-1}(A)$ by
 $(f_\omega^{n})_j^{-1}(A)$. Note that the sets
 $\{(f_\omega^{n})_j^{-1}B(z,\frac{\delta_1}4)\}_j$ are pairwise
 disjoint as long as  that $\delta_1$ is sufficiently small (only depending on the
 manifold). In fact $f_\w^n$ sends each of them onto
 $B(z,\frac{\delta_1}{4})$ and $f_w^n$ is a diffeomorphism
 restricted to each one of them. In particular, their union does
 not contain points of $\mathcal{C}$.

 \medskip

\noindent {\sc Claim 1.} {\em There is $0\le k_j\le
N_\alpha+N_0$ such that $t_\w^{n+k_j}$ is not identically 0 in
$(f_\w^n)_j^{-1}B(z,\alpha)$}.

\medskip

Assume by contradiction that
  $
  t_\w^{n+k_j}\vert_{ (f_\omega^{n})_j^{-1}B(z,\alpha)}=0$ for all $0\le
k_j\le
  N_\alpha+N_0
  $.
 This implies that $(f_\omega^{n})_j^{-1}B(z,\alpha)\subset
A_\w^{n+k_j,\alpha}$
 for all $0\le k_j\le N_\alpha+N_0$.
 %, which in a certain way means that $f^{-n}(B(z_j,\alpha))$
 %is ready for giving rise to new elements of the partition.
 Using Lemma~\ref{le:grow1} we may find  a hyperbolic pre-ball
 $ V^{m}_{\sigma^n(\omega)}\subset B(z,\alpha) \subset A_\w^{n+k_j}$ with $
m\leq  N_{\alpha} $.
% and $\leb(V_{m}) \geq  K_J_{\alpha}\leb(B(z_j,\alpha) ) $.
 Now, since $f_{\sigma^n(\omega)}^m(V^{m}_{\sigma^n(\omega)})$ is a ball of radius $\delta_1$ it follows
from Lemma~\ref{le:grow2} that there is some $V\subset
f_{\sigma^n(\omega)}^m(V^{m}_{\sigma^n(\omega)})$
%$\leb(V) \geq K_{0} \leb(B) $
and $m'\le N_0$ with $f_{\sigma^{n+m}(\omega)}^{m'}(V)=\Delta_0$.
Thus, taking $k_j=m+m'$ we have that $0\le k_j\le
N_\alpha+N_0$ and $(f_{\omega}^{n})_j^{-1}(V^m_{\sigma^n(\omega)})$
contains an element of $\{R_\omega=n+k_j\}$ inside
$(f_\omega^{n})_j^{-1}B(z,\alpha)$. This contradicts the fact
that $t_\w^{n+k_j}\vert_{ (f_\omega^{n})_j^{-1}B(z_j,\alpha)}=0$
 for all $0\le k_j\le N_\alpha+N_0$.

\medskip

\noindent {\sc Claim 2.} {\em $(f_\w^n)_j^{-1}B(z,\delta_1/4)$
contains a component $\{R_w=n+k_j\}$, with $0\le k_j\le
N_\alpha+N_0$.}

\medskip

Let $k_j$ be the
 smallest integer $0\le k_j\le N_\alpha+N_0$ for which
  %\begin{equation}\label{eq.tnk}
$t_\w^{n+k_j}$ is not identically zero in
$(f_\omega^{n})_j^{-1}B(z,\alpha)$.
  %\end{equation}
Since
  $(f_\omega^{n})_j^{-1}B(z,\alpha)\subset A_\w^{n-1,\alpha}\subset
\{t_\w^{n-1}\le1 \},%,\quad \text{for $1\le i\le k_j$.}
  $
 there must be some element
$U^{0}_{n+k_j}$ of $\{R_\omega=n+k_j\}$ for which
 $
((f_\omega^{n})_j^{-1}B(z,\alpha))\cap
U_{n+k_j}^1\neq\emptyset.
 $
Recall that by definition $f_\omega^{n+k_j}$ sends $U_{n+k_j}^1$
diffeomorphically onto $\Delta_0^1$, the ball of radius
$2\delta_0$ around $p$. From time $n$ to $n+k_j$ we may have some
final ``bad" period of length at most $N_0$ {where the derivative
of $f$ may contract}, however being bounded from below by $1/K_0$
in each step. Thus, the diameter of $f_\omega^n(U_{n+k_j}^1)$ is
at most $4\delta_0K_0^{N_0}$. Since $B(z,\alpha)$ intersects
$f_\omega^n(U_{n+k_j}^1)$ and
$\alpha<\delta_0<\delta_0K_0^{N_0}$, we have
 $
( f_\omega^{n})_j^{-1}B(z,\delta_1/4)\supset U_{n+k_j}^0,
 $
as long as we take $\delta_0>0$ small enough so that
\begin{equation*}%\label{eq.r}
5\delta_0K_0^{N_0}<\frac{\delta_1}{4}.\end{equation*}
Thus, we have shown  that $(f_\omega^{n})_j^{-1}B(z,\delta_1/4)$
contains some component of $\{R_\omega=n+k_j\}$ with  $0\le k_j\le
N_\alpha+N_0$, and proved the claim.

\medskip

Since $n$ is a hyperbolic time for each $x_j$, we have by the
distortion control given by Corollary~\ref{co.distortion2}
  \begin{equation}
\label{eq.quo2}
\frac{\leb((f_\omega^{n})_j^{-1}B(z,\delta_1/4))}{\leb(
U_{n+k_j}^0)}
 \le
 {C_2}\frac{\leb(B(z,\delta_1/4))}{\leb(f_\omega^n(U_{n+k_j}^0))}.
\end{equation}

From time $n$ to time $n+k_j$ we have at most $k_j=m_1+m_2$
iterates with $m_1\le N_\alpha$, $m_2\le N_0$ and
$f_\omega^n(U_{n+k_j}^0)$ containing some point $y_j\in
H^{m_1}_{\sigma^n(\omega)}$. By the definition of
$(\lambda,\delta)$-hyperbolic time we have $\dist_\delta
(f_{\sigma^{n}(\omega)}^i(x),\cc)\ge \lambda^{bN_\alpha}$ for
every $0\le
i\le m_1$, which %by the distortion control
implies that there is some constant $D_1=D_1(\alpha)>0$ such that
$|\det (Df_{\sigma^n(\omega)}^i(x))|\le D_1$ for $0\le i\le m_1$ and
$x\in f_\omega^n(U_{n+k_j}^0)$. On the other hand, since the first
$N_0$ preimages of $\Delta_0$ are uniformly bounded away from
$\cc$ we also have some $D_2>0$ such that $|\det
(Df_{\sigma^{n+m_1}(\omega)}^i(x))|\le D_2$ for every $0\le i\le
m_2$ and $x$ belonging to an $i$-preimage
$(f_{\sigma^{n+k_j-i}(\omega)}^i)^{-1}\Delta_0$ of $\Delta_0$. Hence,
$$\leb(f_\omega^n(U_{n+k_j}^0))\ge \frac{1}{D_1D_2}\leb(\Delta_0),$$
which combined with (\ref{eq.quo2}) gives
 $$
 \leb\left((f_\omega^{n})_j^{-1}B\left(z_j,{\delta_1}/{4}
\right)\right)\le D\leb(
U_{n+k_j}^0),
 $$
with $D$ only depending on $C_2$, $D_1$, $D_2$, $\delta_0$ and
$\delta_1$. Moreover, if $\ep$ is small enough, $D_1$ and $D_2$ can be taken uniform over $\w$.

We are now able to compare the Lebesgue measures of
$\cup_{i=0}^N\big\{R_\omega=n+i\big\}$ and $A_\w^{n-1}\cap H^{n}_\w$.
Using \eqref{proportion} we get
$$
 \begin{array}{ll}
 \leb(A_\w^{n-1}\cap H^n_\w)&\leq L^{-1}\displaystyle\sum_{z\in I}
\leb\left((f_\omega^{n})_j^{-1}B\left(z_j,{\delta_1}/{4}
)\right)\cap A_\w^{n-1}\cap
 H^n_\w\right)\\
 &\leq L^{-1}\displaystyle\sum_{z\in I}\sum_j
\leb\left((f_\omega^{n})_j^{-1}B\left(z_j,{\delta_1}/{4}
\right)\right)\\
&\leq DL^{-1}\displaystyle\sum_{z\in I}\sum_j \leb(U_{n+k_j}^0)
 \end{array}
$$
One should mention that the sets $U_{n+k_j}^0$ also depend on $z\in
I$. They are disjoint for different values of $z\in I$. Hence, putting
$N=N_0+N_\alpha$, we have
$$\leb(A_\w^{n-1}\cap H^{n}_\w)\leq
DL^{-1}\leb\left(\cup_{i=0}^N\big\{R_\omega=n+i\big\}\right).$$

To finish the proof we only have to take $s_0=DL^{-1}$.\cqd

\cre \label{re.c1}It follows from the choice of the constants $D_1$
and $D_2$ and  $D$  that the constant $s_0$ only depends on the
constants $\lambda$, $b$, $N_\alpha$, $N_0$, $C_2$,
$\delta_0$ and $\delta_1$. Recall that $L$ is an absolute
constant only depending on $M$.\fre

\subsubsection{Independent metric estimates}\label{s.check}

 We have taken a disk $\Delta_0$ of radius
   $\delta_0>0$ around a point $p\in M$ with certain properties
   and, for every $\omega\in\Omega_\ep$, we defined inductively the subsets
  $A_\w^n$, $B_\w^n$, $\{R_\omega=n\}$ and $\Lambda_\w^n$ which are related in
   the following way:
    $$\Lambda_\w^n=\Delta_0\setminus\{R_\w\le n\}=A_\w^n\dot\cup B_\w^n.$$
Since we are dealing with a non-uniformly expanding on random
orbits system,  for each $\omega\in\Omega_\ep$ and each $n\in \NN$
we also have defined the set  $H_\w^n\subset M$ of points that
have $n$ as a $(\lambda,\delta)$-hyperbolic time, and the tail set
$\Gamma_\omega^n$  as in \eqref{tailset}. From the definition of
$\Gamma_\omega^n$, Remark~\ref{r.start} and
Lemma~\ref{c:hyperbolic3} we deduce that for every $\w\in \Omega_\ep$:

\begin{enumerate}
\item[(m$_1$) ] there is $\zeta>0$ such that for every $\w\in\Omega_\ep$, $n\ge1$
and every $A\subset M\setminus\Gamma_\omega^n$ with $m(A)>0$
    \[
   \frac{1}{n}\sum_{j=1}^{n}\frac{\leb(A\cap H_\w^j)}{\leb(A)} \geq \zeta.
   \]
\end{enumerate}
Moreover, Lemmas~\ref{l.flowb},
Lemma~\ref{l.flowa} and Proposition~\ref{p.construction} give us a {random} version of metric relations (m$_2$)-(m$_4$) for Section 4.5.2 in \cite{A04}.
%\begin{enumerate}
%\item[(m$_2$) ] there is $c_0>0$ (not depending on
%$\delta_0$) such that for every $\w\in\Omega_\ep$ and $n\ge1$
%%\begin{itemize}
% $${\leb(B_\w^{n-1}\cap A_\w^n)}\ge c_0{\leb(B_\w^{n-1})};$$
%%\end{itemize}
%\item[(m$_3$) ]  there are $d_0,r_0>0$  with $d_0+r_0<1$ and
%$d_0,r_0\to 0$
% as $\delta_0\to 0$, such
%that for every $\w\in\Omega_\ep$ and $n\ge1$
%%\begin{itemize}
% $$\frac{\leb(A_\w^{n-1}\cap B_\w^n)}{\leb(A_\w^{n-1})}\le d_0\qand
% \frac{\leb(A_\w^{n-1}\cap \{R_\omega=n\})}{\leb(A_\w^{n-1})}\le r_0;$$
%%\end{itemize}
%\item[(m$_4$) ] there is $s_0>0$ and an integer $N\ge 0$ such that
%for every $\w\in\Omega_\ep$ and $ n\ge1$
% %  \begin{itemize}
%  $$ \leb\left(\bigcup_{i=0}^N\big\{R_\omega=n+i\big\}\right)\ge s_0
%\leb(A_\w^{n-1}\cap
%    H_\w^n).$$
%  % \end{itemize}
%\end{enumerate}
In the inductive process of construction of the sets $A_\w^n$, $B_\w^n$,
$\{R_\omega=n\}$ and $\Lambda_\w^n$ we have fixed some large integer
$R_0$, this being the first step at which the construction began.
Recall that $A_\w^n=\Lambda_\w^n=\Delta_0$ and
$B_\w^n=\{R_\omega=n\}=\emptyset$ for $n\le R_0$. We will assume that
 \begin{equation*}%\label{e.constants}
 R_0>\max\left\{2(N+1),{12}/\zeta\right\}.
 \end{equation*}
Note that since $N$ and $\zeta$ do not depend on $R_0$ this is
always possible, so we can follow Section 4.5.2 at \cite{A04} to conclude
that for
every $\w\in\Omega_\ep$ this process indeed produces a partition
$\mathcal{P}_\w=\{R_\w=n\}_n$ of $\Delta_0$. Moreover, it also follows from there that, if there
exist $C,
p>0$ such that for every $\w\in\Omega_\ep$ we have
$\leb(\Gamma_\w^n)\leq Cn^{-p}$ then there exists $C'$ such that for every
$\w\in\Omega_\ep$
 the return time function
satisfies
\begin{equation}
\leb(R_\w>n)\leq C'n^{-p}.
\end{equation}
It is possible to check that constant $C'$ depend ultimately on
the constants $B$, $\beta$ and ${b_0}$ associated to the
non-uniform expanding condition in Definition \ref{defNUEOA}.
This implies that $C'$ can be considered the same for every
$\w\in\Omega_\ep$.

\section{Applications}\label{se.examples}

\subsection{Local diffeomorphisms}\label{local difeos}

One example of transformations that fits our hypothesis was introduced in \cite{ABV00} and consists on
robust ($C^1$ open) classes of local diffeomorphisms (with no
critical sets) that
 are non-uniformly expanding. The existence and unicity of SRB pro\-ba\-bi\-li\-ty measures for this maps was proved in \cite{ABV00} and \cite{A03}. Random perturbations for this maps were considered
in \cite{AAr03}, where it was proved a weak form of stochastic stability - the
convergence in the weak$^*$ topology of the density of the unique stationary
probability measure to the density of the unique SRB probability measure. Here we improve it to the strong
version
of stochastic stability. As corollary we also obtain the strong statistical stability, proved in \cite{A04}. We follow closely the constructions and results in \cite{ABV00}
and
\cite{AAr03} and introduce some extras to have the required transitivity.

 This classes of maps  and can be obtained, e.g.
through
deformation of a uniformly expanding map by isotopy inside some
small region. In general, these maps are not uniformly expanding:
deformation can be made in such way that the new map has periodic
saddles.

 Let $M$ be the  $d$-dimensional torus $\mathbb{T}^d$, for some $d\ge 2$, and $m$
  the normalized Riemannian volume form. Let  $f_0\colon M\to M$ be a uniformly expanding map and $V\subset M$ be a small
neighborhood of a fixed point $p$ of $f_0$ so that the restriction of $f_0$ to $V$ is
injective. Consider a $C^1$-neighborhood $\cu$ of $f_0$ sufficiently small so that
any map $f\in\cu$ satisfies:
\begin{enumerate}
\item[i)] $f$ is {\em expanding outside $V$}: there
exists $\lambda_0<1$ such that
 $$\|Df(x)^{-1}\|< \lambda_0\quad\text{for
every $x\in M\setminus V$};$$
 \item[ii)] $f$ is {\em volume expanding
everywhere}\index{expanding!volume}: there exists $\lambda_1>1$
such that
 $$|\det Df(x)| > \lambda_1\quad\text{ for every $x\in M$;}$$

 \item[iii)]
 $f$ is {\em not too contracting on~$V$}: there is some
 small $\gamma>0$ such that
$$\|Df(x)^{-1}\|< 1+\gamma\quad\text{ for every $x\in
V$},$$
 \end{enumerate}
 and constants $\lambda_0, \lambda_1$ and $\gamma$ are the same for all
$f\in\cu$.
 Moreover, for $f\in\cu$ we introduce random perturbations
$\{\Phi,(\theta_{\ep})_{\ep>0}\}$. In particular,
we consider a
continuous map
  \[
  \begin{array}{rccl}
  \Phi:& T &\longrightarrow&  \cu\\
  & t &\longmapsto & f_t
  \end{array}
  \]
 where $T$
is a metric space and $f\equiv f_{t^*}$ for some
$t^*\in T$. Consider
a family
$(\theta_\ep)_{\ep>0}$ of
probability measures on $T$ such that their supports are non-empty and satisfies $
 \supp(\theta_\ep)\rightarrow \{t^*\}$, {when} $\ep\to 0$.
 We can choose appropriately the constants
$\lambda_0$, $\lambda_1$ and $\gamma$ so
that every map $f\in\cu$ is non-uniformly
expanding on {\em all} random orbits with uniform exponential decay of the Lebesgue measure of the tail sets $\Gamma_\w^n$ given by \eqref{tailset}, ignoring naturally the recurrence time function.
 \begin{Proposition}\label{nuedifeos}
 Consider $f_0$, $\mathcal U$, $f\in\cu$ and $\{\Phi,(\theta_{\ep})_{\ep>0}\}$
as before. There exists
$a_0>0$ such that for every $\w\in \supp(\theta_\ep^\NN)$ and Lebesgue almost every $x\in M$
\begin{equation}\label{nueoadifeos}\limsup_{n\to+\infty}\frac1n\sum_{j=0}^{n-1}
\log\| Df_{\sigma^j(\w)}(f^j_{\w}(x))^{-1}\| \le
-a_0.\end{equation}
Moreover, there is $0<\tau<1$ such that
 $m(\Gamma_\w^n)\leq \tau^n$,
for $n\ge 1$ and $\w\in \supp(\theta_\ep^\NN)$.
 \end{Proposition}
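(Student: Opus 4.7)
My plan is to follow closely the scheme of \cite[Proposition~2.1]{ABV00} for the deterministic case and its random extension in \cite{AAr03}, making sure that all estimates depend only on the constants $\lambda_0,\lambda_1,\gamma$ (which are uniform on $\cu$) and not on the particular realization $\w$. The guiding idea is that a random orbit can only fail to be expanding if it spends an atypically large fraction of its iterates inside $V$, and the volume-expansion hypothesis (ii) together with the smallness of $m(V)$ forces the Lebesgue measure of such orbits to decay exponentially.

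First I would fix a threshold $\theta\in(0,1)$, depending only on $\lambda_0$ and $\gamma$, such that
$$\theta\log(1+\gamma)+(1-\theta)\log\lambda_0<-2a_0<0$$
for some $a_0>0$; this is possible once $\cu$ is chosen small enough so that $\gamma$ is small relative to $|\log\lambda_0|$. For $\w\in\supp(\theta_\ep^\NN)$, $x\in M$ and $n\ge1$, let $N_\w(x,n)=\#\{0\le j<n:f_\w^j(x)\in V\}$. Hypotheses (i) and (iii) give the pointwise bound
$$\sum_{j=0}^{n-1}\log\|Df_{\sigma^j(\w)}(f_\w^j(x))^{-1}\|\le N_\w(x,n)\log(1+\gamma)+(n-N_\w(x,n))\log\lambda_0,$$
so on the complement of $B_n(\w):=\{x\in M:N_\w(x,n)\ge\theta n\}$ the $n$-th partial average is $\le-2a_0$.

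The core step is to show that there exists $\tau_1<1$ such that $m(B_n(\w))\le\tau_1^n$ for every $n\ge1$ and every $\w\in\supp(\theta_\ep^\NN)$. I would decompose $B_n(\w)=\bigsqcup_{|I|\ge\theta n}C_I(\w)$ along the itineraries $I\subset\{0,\dots,n-1\}$ of visits to $V$, where $C_I(\w)=\{x:f_\w^j(x)\in V\text{ iff }j\in I\}$. Covering each $C_I(\w)$ by pieces on which $f_\w^n$ is injective and using $|\det Df_t|\ge\lambda_1$ uniformly on $\cu$, one bounds $m(C_I(\w))$ in terms of a product involving $m(V)^{|I|}$ and a uniform combinatorial factor; summing over itineraries via $\binom{n}{\theta n}\le e^{H(\theta)n}$ and choosing $m(V)$ sufficiently small (by shrinking $V$, which keeps hypotheses (i)--(iii) valid) yields a Chernoff-type estimate $m(B_n(\w))\le\tau_1^n$. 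The delicate point here, and the expected main technical obstacle, is that the pullbacks of $V$ under the random inverse branches must be controlled with a uniform bounded-distortion constant over $\w\in\supp(\theta_\ep^\NN)$; since we are in a $C^1$-neighborhood of the uniformly expanding map $f_0$ and the number of iterates is matched against the volume expansion $\lambda_1$, the same arguments as in \cite[Section~2]{ABV00} and \cite{AAr03} go through with constants independent of $\w$.

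Finally, the inclusion $\Gamma_\w^n\subset\bigcup_{k\ge n}B_k(\w)$ together with $m(B_k(\w))\le\tau_1^k$ gives $m(\Gamma_\w^n)\le C\tau_1^n/(1-\tau_1)$, so setting $\tau=\tau_1^{1/2}<1$ we obtain $m(\Gamma_\w^n)\le\tau^n$ for all large $n$, proving the second assertion. The almost-sure bound \eqref{nueoadifeos} then follows from a Borel--Cantelli argument applied to $\theta_\ep^\NN\times m$, exactly as in \cite{AAr03}.
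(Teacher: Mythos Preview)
Your plan is correct and reconstructs exactly the argument of \cite{AAr03}, which is all the paper invokes here (its proof reads in full ``See \cite{AAr03}''). One minor imprecision worth flagging: in \cite{ABV00,AAr03} the exponential bound on $m(B_n(\w))$ does not come from a factor $m(V)^{|I|}$ but from the observation that every domain of injectivity of $f_\w^n$ has measure at most $\lambda_1^{-n}$, together with a count of $n$-itineraries (relative to a fixed cover by the $D$ domains of injectivity of each $f_t$, one of which contains $V$) hitting that domain at least $\theta n$ times, yielding $m(B_n(\w))\le\lambda_1^{-n}\sum_{k\ge\theta n}\binom{n}{k}(D-1)^{n-k}$; this is exponentially small once $\lambda_1$ is close enough to the degree $D$ and $\theta$ is close to $1$ (hence $\gamma$ small). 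Since you explicitly defer this technical step to the references, this is only a clarification of the mechanism, not a gap.
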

 \begin{proof} See \cite{AAr03}.

\cqd

We now show that performing the construction a bit more carefully we have the maps also transitive, which in particular implies that each map has a unique SRB probability measure. We shall actually prove that those maps are topologically mixing.
We start by considering a map $\bar f\colon M\to M$ (in the boundary of the set of uniformly expanding maps) which satisfies \emph{(i), (ii)} and \emph{(iii)} as the cartesian product of  one-dimensional maps $\varphi_1\times\cdots\times\varphi_d$, with $\varphi_1,\dots,\varphi_{d-1}$ uniformly expanding in $S^1$, and $\varphi_d$ the \emph{intermittent} map in $S^1$: it can be written as
 $$\varphi_d(x)=x+x^{1+\alpha}, \quad \text{for some $0<\alpha<1$,}$$
in a neighborhood of 0  and $\varphi_d'(x)>1$ for every $x\in S^1\setminus\{0\}.$
 One already has that  any $f$ in a sufficiently small $C^1$-neighborhood
 $\bar\cu$ of $\bar f$ satisfies \emph{(i), (ii)} and \emph{(iii)} for convenient choice of constants
 $\lambda_0$, $\lambda_1$, $\gamma$, and neighborhood $V$ of the fixed point $p=0\in \TT^d$.
 Next lemma ensures that $\bar f$ is topologically mixing, and thus topologically transitive. %,
% so that we are able to apply Theorem \ref{thA} to $\bar f$.
We show moreover that if $\bar\cu$ is sufficiently small, then all the maps in $\bar\cu$ are topologically mixing.

\cle\label{le.LEO}
Given  $\alpha>0$ there is $N_\alpha\ge 1$  such
that $\bar f^{N_\alpha}(B_\alpha(x))=\TT^d$ for any $x\in M$. \fle

\begin{proof} This is an immediate consequence of the fact that a  similar conclusion holds for the maps $\varphi_1,\dots,\varphi_d$ in $S^1$. This is standard for the uniformly expanding maps $\varphi_1,\dots\varphi_{d-1}$, and also for the intermittent map $\varphi_d$ as it is topologically conjugate to a uniformly expanding map of the same degree.
\end{proof}

Let us now obtain a similar conclusion for any map $f$ in $\bar\cu$. This cannot be done by a simple continuity argument, since for smaller radii $\alpha>0$ in principle we need to diminish the size of the $C^1$-neighborhood. However, a continuity argument works if one just needs to consider balls of some fixed radius. By Proposition \ref{nuedifeos} any map $f\in\bar\cu$  is non-uniformly expanding and, if we consider a random perturbation of $f$ as before, then $f$ is also non-uniform expanding on (all) random orbits (naturally, for sufficiently small noise level), with uniform exponential decay of the Lebesgue measure of the tail set. By Propositions \ref{nuedifeos} and \ref{pr.hyperbolic1}, Lebesgue almost every point $x\in M$ has infinitely many $\lambda$-hyperbolic times and,  moreover, we may take $\lambda=e^{-a_0/2}$. Lemmas \ref{l.contr} and \ref{p.contr} imply that there exists $\delta_1>0$ (uniform for the maps in $\bar\cu$) such that Lebesgue almost every point in $\TT^d$ has arbitrarily small neighborhoods which are sent onto balls of radius $\delta_1>0$. Taking $\alpha=\delta_1/2$ in Lemma~\ref{le.LEO}, there is some  positive integer $N$ for which every ball of radius $\delta_1/2$ is sent onto $M$ by $\bar f^N$. Then, just by continuity, one has that any ball of radius $\delta_1$ is sent onto $M$ by $f^N$ for any $f\in\bar\cu$, provided this  $C^1$-neighborhood is sufficiently small. Then, in particular, each $f\in\bar\cu$ is topologically transitive. The next theorem is now a direct application of Theorem \ref{thA}.

\cte  Let $f\in\bar\cu$. Then
\begin{enumerate}
    \item if $\ep$ is small enough then $
f$
admits a unique absolutely continuous
ergodic stationary
probability measure;
    \item $f$ if
strongly stochastically stable.
  \end{enumerate}
\fte

%Let $\mu_{f}$ be the unique  absolutely continuous ergodic invariant probability measure for $f\in\bar\cu$. Applying Corollary~\ref{SSStat} one reobtains  the strong statistical stability which has been proved in \cite{A04}.
%
%
%\begin{Corollary}
%The map $\bar\cu\ni f\mapsto
%d\mu_{ f}/d\leb$ is continuous with respect to the $L^1$-norm in the
%space
%of densities.
%\end{Corollary}

\subsection{Viana maps}

We consider now an important open class of non-uniformly expanding maps with
critical sets in higher
dimensions introduced in \cite{V97}. This example features the hypothesis of Theorem \ref{thA}
resulting on the proof of their strong stochastic
stability.
The existence of a unique  absolutely continuous ergodic invariant probability
measure and the strong
statistical stability were proved in \cite{AV02}. A weaker form of stochastic
stability (weak$^*$ convergence of the stationary measure to
$\mu_f$) was established in \cite{AAr03}.
In order to check the hypothesis of Theorem \ref{thA} we use essentially the results in \cite{AAr03}
 about the non-uniform
expansion, slow recurrence to the critical
set and uniform decay of the Lebesgue measure of the tail set, both for deterministic and random cases.
Without loss of generality we discuss the
two-dimensional
case and we refer \cite{V97},  \cite{AV02} and \cite{AAr03} for details.\\

Let $p_0\in(1,2)$ be such that the
critical point
$x=0$ is pre-periodic for the quadratic map $Q(x)=p_0-x^2$. Let
$S^1=\RR/\ZZ$ and $b:S^1\rightarrow \RR$ be a Morse function, for
instance, $b(s)=\sin(2\pi s)$. For fixed small $\alpha>0$,
consider the map
 $$ \begin{array}{rccc} \hat f: & S^1\times\RR
&\longrightarrow & S^1\times \RR\\
 & (s, x) &\longmapsto & \big(\hat g(s),\hat q(s,x)\big)
\end{array}
 $$
 where $\hat g$ is the uniformly expanding
map of the circle defined by $\hat{g}(s)=ds$ (mod $\ZZ$) for some
$d\ge16$, and $\hat{q}(s,x)=a(s)-x^2$ with $a(s)=p_0+\alpha b(s)$. As it is shown in \cite{AAr03}, it
is no restriction to assume that $\cc=\{(s,x)\in S^1\times I\colon
x=0\}$ is the critical set of $\hat f$  and we do so.
If
$\alpha>0$ is small enough there is an
interval $I\subset (-2,2)$ for which $\hat f(S^1\times I)$ is
contained in the interior of $S^1\times I$. Any map $f$
sufficiently close to $\hat f$ in the $C^3$ topology has
$S^1\times I$ as a forward invariant region (in fact, here it suffices to be
$C^1$
close). We consider a small $C^3$ neighborhood $\mathcal V$ of $\hat f$ as
before and will refer to maps in $\mathcal V$ as {\em Viana maps}. Thus, any
{\em Viana map} $f\in\mathcal V$ has $S^1\x I$ as a forward invariant region,
and so an {\em attractor} inside it, which is precisely $$\Lambda=\bigcap_{n\geq
0}f^n(S^1\x I).$$

 We introduce the random perturbations $\{\Phi,(\theta_\ep)_\ep\}$ for this maps. We set
   $T\subset\mathcal V$
to be a $C^3$
neighborhood of $\hat{f}$ consisting in maps $f$ restricted to the
forward invariant region
$S^1\times I$ for which
$Df(x)=D\hat f(x)$ if $x\notin\mathcal C$, the map $\Phi$ to be the identity map at $T$ and
$(\theta_\ep)_{\ep}$ a family of Borel measures on $T$ such that their supports are non-empty and satisfy $
 \supp(\theta_\ep)\rightarrow \{f\}$, {when} $\ep\to 0$, for  $f\in T$. Let $h_{f}$ to be the density
of the
unique absolutely continuous invariant probability measure $\mu_{f}$ for $f$.
We will show that such Viana maps $f\in\mathcal V$ satisfies the hypothesis of Theorem
\ref{thA} so that we may conclude

 \cte\label{SSSViana}
  Let $f\in\mathcal V$ be a Viana map. Then
  \begin{enumerate}
    \item if $\ep$ is small enough then $
f$
admits a unique absolutely continuous
ergodic stationary
probability measure,
    \item $f$ is
strongly stochastically stable.
  \end{enumerate}
\fte

%Applying Corollary~\ref{SSStat} we obtain the strong statistical stability for Viana maps. This result was already established in \cite{AV02}.
%
%
%\begin{Corollary}
%Let $\mathcal{V}$ be a family of Viana maps. Then $\mathcal{V}\ni  f\mapsto
%d\mu_{ f}/d\leb$ is continuous with respect to the $L^1$-norm in the
%space
%of densities.
%\end{Corollary}

\subsubsection{Deterministic estimates}

 The results in
\cite{V97} show that if $\mathcal V$ is sufficiently small (in the $C^3$
topology)
then $f\in\mathcal V$ has two positive Lyapunov
exponents almost everywhere:
 there is a constant $\eta>0$ for which
$$\liminf_{n\rightarrow+\infty}\frac{1}{n}\log
\|Df^n(s,x)v\|\geq \eta$$ for Lebesgue almost every $(s,x)\in
S^1\times I$ and every non-zero  $v\in T_{(s,x)}(S^1\times I)$.
This does not necessarily imply that $f$ is non-uniformly
expanding. However,  as it was shown in \cite{AAr03}, a slightly deeper use of Viana's arguments
enables us to prove the non-uniform expansion and the slow recurrence to the critical set of any $C^2$ map $f$ such that
$$\|f-\hat f\|_{C^2}<\alpha.$$
In particular they proved that there exist $C, \zeta>0$ such that for  $f$ as before and $n\geq1$ there is a set $\Gamma_n\subset S^1\times I$ for which
 \begin{equation*}
 m(\Gamma_n)\le C e^{-\zeta\sqrt n},
 \end{equation*}
and such that
  for each $(s,x)\notin \Gamma_n$ we have\begin{enumerate}
\item  there is $a_0>0$ such that \begin{equation}
 \nonumber
 \frac1k\sum_{j=0}^{k-1}\log\|Df(f^j(s,x))^{-1}\|\le
 - a_0
 \quad\mbox{for all } k\ge n,\text{ and}
 \end{equation} \item for given small $b_0>0$ there is $\delta>0$ such that \begin{equation}
 \nonumber
 \frac1k\sum_{j=0}^{k-1}-\log\dist_\delta(f^j(s,x),\cc)\leq b_0
 \quad\mbox{for all } k\ge n.
 \end{equation}
 \end{enumerate}
Moreover, the constants $\zeta$, $a_0$ and $\delta$ only depend on the quadratic map $Q$ and $\alpha>0$.

In \cite{AV02} it was also proved
a topological mixing property.
\cte
For every $f\in\mathcal V$ and every open set $A\subset
S^1\x I$ there is some $n_A\in\NN$ for which $f^{n_A}(A)=\Lambda$.
\fte
\begin{proof}
See \cite{AV02}, Theorem C.
\end{proof}

\subsubsection{Estimates for random perturbations} Let $f$ be close to $\hat f$ in the $C^3$ topology and fix a random perturbation $\{\Phi,(\theta_\ep)_{\ep>0}\}$ as before. We want to show that if $\ep>0$
is small enough then
  $f$ is non-uniformly expanding on random orbits and $\Gamma_\omega^n$ decays sufficiently fast and uniformly on $\omega$. The estimates in \cite{AAr03} for
$\log\dist_\delta(f^j(s,x),\cc)$ and $\log\|Df(f^j(s,x))^{-1}\|$ over
the orbit of a given point $(s,x)\in S^1\times I$ can easily be done replacing
the iterates $f^j(s,x)$  by random iterates $f_\w^j(s,x)$. Briefly, those estimates rely on a delicate
decomposition of the orbit of the point $(s,x)$ from time 0 until
time $n$ into finite pieces according to its returns to the
neighborhood $S^1\times (-\sqrt\alpha,\sqrt\alpha)$ of the critical set.
The main tools for this estimates were \cite[Lemma~2.4]{V97} and \cite[Lemma~2.5]{V97}
whose proofs may easily be mimicked for random orbits. Indeed, the
important fact in the proof of the referred lemmas is that orbits
of points in the central direction stay close to  orbits of the
quadratic map $Q$ for long periods, as long as $\alpha>0$ is taken
sufficiently small. Hence, such results can easily be obtained for
random orbits as long as we take $\ep>0$ satisfying $\ep\ll\alpha$. It was also proved in \cite{AAr03} that exists $C>0,\zeta>0$ such that
$\leb(\Gamma_\w^n)<Ce^{-\zeta\sqrt n}$, for almost every $\w\in\supp(\theta_\ep^\NN)$, which clearly is enough for our purposes. Moreover, the constants for the estimates on the the tail set, non-uniform expansion and slow recurrence remains depending only on the quadratic map $Q$ and $\alpha$. In particular, they are uniform over $\w$.

\bibliographystyle{novostyle}

\begin{thebibliography}{BBM02}

\bibitem[Al00]{A00} J. F. Alves, {\em SRB measures for
non-hyperbolic systems with multidimensional expansion}, Ann.
Scient. \'Ec. Norm. Sup., $4^e$ s\'erie, {\bf 33} (2000), 1--32.
%
%\bibitem[Al01]{A01} --------, {\em Nonuniformly expanding dynamics:
%stability from a probabilistic viewpoint}, Disc. and Cont. Dyn. Syst. {\bf 7}
%(2001), n.2, 363--375.

\bibitem[Al03]{A03} --------,    {\em Statistical analysis of
non-uniformly expanding dynamical systems}, 24th Braz. Math. Colloq., IMPA, Rio
de Janeiro, 2003.

\bibitem[Al04]{A04} --------, {\em Strong statistical stability
of non-uniformly expanding maps}, Nonlinearity {\bf 17} (2004), n. 4,
1193--1215.

\bibitem[AA03]{AAr03} J. F. Alves, V. Araújo, {\em Random
perturbations of non-uniformly expanding maps}, Ast\'erisque {\bf 286} (2003),
25--62.

\bibitem[AAV07]{AAV07} J. F. Alves, V. Araújo, C. H. Vásquez, \emph{
Stochastic stability of non-uniformly hyperbolic diffeomorphisms},
Stoch. Dyn. \textbf{7} (2007), no. 3, 299--333.

\bibitem[ABV00]{ABV00} J. F. Alves, C. Bonatti, M. Viana, {\em
SRB measures for partially hyperbolic systems whose
       central direction is mostly expanding}, Invent. Math. {\bf 140} (2000),
351--398.

%\bibitem[ALP03]{ALP0} J. F. Alves, S. Luzzatto, V. Pinheiro, {\em Markov
%structures for non-uniformly
%expanding maps on compact manifolds in arbitrary dimension}, Electron. Res.
%Announc. Amer. Math. Soc. {\bf 9}
%(2003), 26-31.


\bibitem[ALP05]{ALP05} J. F. Alves, S. Luzzatto, V. Pinheiro, {\em Markov
structures and decay of correlations
for non-uniformlyexpanding dynamical systems}, Annales de l'institut Henri
Poincaré - Analyse non linéaire {\bf 22} (2005), n. 6, 817-839.

\bibitem[AV02]{AV02} J. F. Alves, M. Viana, {\em Statistical
stability for robust classes of maps with non-uniform expansion},
Ergod. Th. \& Dynam. Sys. {\bf 22} (2002), 1-32.

\bibitem[Ara00]{Ar00} V. Ara\'ujo, {\em  Attractors and time
averages for random maps},  Annales de l'institut Henri Poincaré - Analyse non
linéaire {\bf
17} (2000), 307-369.

%\bibitem[Ar01]{Ar2} \bysame, {\em Infinitely Many Stochastically Stable
%Attractors},
% Nonlinearity {\bf 14} (2001), 583-596.

\bibitem[Arn98]{Arn98} L. Arnold, {\em Random dynamical Systems}, Springer-Verlag,
Berlin, 1998.

\bibitem[Ba97]{Ba97} V. Baladi, {\em Correlation spectrum of quenched and
annealed
equilibrium states for random expanding
     maps}, Commun. Math. Phys. {\bf 186} (1997), 671--700.

\bibitem[BBM02]{BaBeM02} V. Baladi, M.
Benedicks, V. Maume-Deschamps, {\em Almost sure rates of mixing
for i.i.d. unimodal maps},   Ann. Scient. \'Ec. Norm. Sup., $4^e$
s\'erie, {\bf 35} (2002), 77--126.

\bibitem[BKS96]{BaKS96} V. Baladi, A. Kondah and B. Schmitt. {\em Random
correlations for small perturbations of expanding maps.}
     Random Comput. Dynam. {\bf 4} (1996), 179--204.

\bibitem[BaV96]{BaV96} V. Baladi, M. Viana, {\em Strong
stochastic stability and rate of mixing for unimodal maps}, Ann.
Scient. \'Ec. Norm. Sup., $4^e$ s\'erie, {\bf 29} (1996), 483--517.

\bibitem[BaY93]{BaY93} V. Baladi, L.-S. Young, {\em On the spectra of randomly
perturbed expanding
 maps}, Comm. Math. Phys {\bf156} (1993), 355--385. Erratum, Comm. Math.
Phys {\bf166} (1994), 219--220.


\bibitem[BeC85]{BeC85} M. Benedicks and L. Carleson, {\em On iterations of
$1-ax^2$
on (-1, 1)}, Ann. Math.{\bf 122} (1985), 1--25.

\bibitem[BeC91]{BeC91} \bysame, {\em The
dynamics of the H\'enon map}, Ann. Math. {\bf 133} (1991), 73--169.

\bibitem[BeV06]{BeV06} M. Benedicks, M. Viana, {\em Random perturbations and
statistical properties of Hénon-like
maps}. Annales de l'institut Henri Poincaré - Analyse non linéaire, {\bf 23} n.
5 (2006), 713--752.

\bibitem[BeY92]{BeY92} M. Benedicks, L.-S. Young, {\em
Absolutely continuous invariant measures and random perturbations for certain
one-dimensional maps}, Erg. Th. \&
Dyn. Sys. {\bf 12} (1992), 13-37.

%\bibitem[Bi86]{Bi86} P. Billingsley. {\em Probability and measure}, John Wiley
%and Sons, 1986.

%\bibitem[Bi]{Bi}Billingsley, P., {\em Convergence of probability measures}

%\bibitem[Bog93]{Bo93} T. Bogenschütz. {\em Equilibrium states for random
%dynamical systems}. PhD Thesis, Institut für Dynamische
%     Systeme, Universitat Bremen, 1993.

%\bibitem[Bo96]{Bo96} T. Bogenschütz. {\em Stochastic stability of equilibrium
%states.} Random Comput. Dynam. {\bf 4} (1996), 85--98.
%
%\bibitem[Bo00]{Bo00} T. Bogenschütz. {\em Stochastic stability of
%invariant subspaces.} Ergod. Th. and Dynam. Sys. {\bf 20} (2000), 663--680.
%
%\bibitem[BV00]{BV00} C. Bonatti, M. Viana, {\em SRB measures
%for partially hyperbolic systems with mostly contracting central
%direction}, Israel J. Math. {\bf 115} (2000), 157--193.

%\bibitem[Bo70]{Bow70} R. Bowen, {\em Markov partitions for Axiom
%diffeomorphisms}, Amer.  J. Math.  {\bf 92} (1970), 725--747

\bibitem[Bo75]{Bow75} R. Bowen,  {\em Equilibrium states and the ergodic
theory of Axiom A diffeomorphisms}, Lecture Notes in Mathematics {\bf 480},
Springer, 1975.

\bibitem[BR75]{BowR75} R. Bowen, D. Ruelle, {\em The ergodic
theory of Axiom A flows}, Invent. Math. {\bf 29} (1975), 181--202.


%\bibitem[Bu99]{Bu99} J. Buzzi. {\em Exponential decay of correlations for random
%Lasota-Yorke maps.} Commun. Math. Phys. {\bf 208}
%     (1999), 25--54.
%
%\bibitem[Bu00]{Bu00} J. Buzzi. {\em Absolutely continuous SRB
%measures for random Lasota-Yorke maps.} Trans. Amer. Math. Soc.
%     {\bf352} (2000), 3289--3303.



%\bibitem[BST]{Bu}
%J. Buzzi, O. Sester, M. Tsujii, {\em Weakly expanding skew-products of
%quadratic maps}, Ergodic Th. \& Dynam.
%Syst., to appear.

%\bibitem[Car]{Ca93}
%M.~Carvalho.
%\newblock {S}inai-{R}uelle-{B}owen measures for $n$-dimensional derived from
%  {A}nosov diffeomorphisms.
%\newblock {\em Ergod. Th. \& Dynam. Sys.} {\bf13} (1993) 21-44.


%\bibitem[Cas]{Castro}%
%\printindex
%A.~A. Castro.
%\newblock {\em Backward inducing and exponential decay of correlations for
%  partially hyperbolic attractors with mostly contracting central direction}.
%\new3]


\bibitem[CE80]{CE80} P. Collet, J. Eckmann, {\em On the abundance of
aperiodic behavior for maps on the interval}, Comm. Math. Phys. {\bf73} (1980),
115--160.

\bibitem[CY05]{CoY05} W. Cowieson, L.-S. Young. {\em  SRB measures as
zero-noise limits}. Ergod. Th. \& Dynam. Sys. {\bf25} (2005), n. 4, 1091--1113.

%\bibitem[Cr02]{Cr02} H. Crauel, {\em Random Probability Measures on Polish
%Spaces},
%Series Stochastics Monographs {\bf 11}, Taylor \& Francis, London, 2002.

%\bibitem[GB89]{GB1} P. G\'ora, A. Boyarsky, {\em
%Absolutely continuous invariant measures for piecewise expanding
%$C^2$ transformations in $\RR^N$}, Isr. Jour. of Math. {\bf 67}
%(1989), 272-286.

%\bibitem[HPS77]{HPS77} M. Hirsch, C. Pugh, M. Shub,
%{\em Invariant manifolds}, Lect. Notes in Math. {\bf 583}, Springer Verlag,
%1977.

\bibitem[Ja81]{J81} M. Jakobson, {\em Absolutely continuous invariant measures
for one parameter families of one-dimensional maps}, Comm. Math. Phys. {\bf81}
(1981), 39--88.

%\bibitem[Ka97]{Ka97} O. Kallenberg, {\em Foundations Of Modern Probability},
%Springer Verlag, 1997.

\bibitem[KK86]{KatKi86} A. Katok, Y. Kifer, {\em  Random perturbations of
transformations of an interval},  J. Analyse Math.  \textbf{47}  (1986), 193--237.

%\bibitem[Ke82]{Ke82} G. Keller, {\em Stochastic stability of some chaotic
%dynamical systems}. Monatsh. Math. {\bf94} (1982), 313--333.

%\bibitem[KhKi96]{KhKi96} K. Khanin, Y. Kifer, {\em  Thermodynamic formalism for
%random transformations and statistical87
%     mechanics}. Amer. Math. Soc. Transl. (2) {\bf171} (1996), 107--140.

 \bibitem[Ki86]{Ki86} Yu. Kifer, {\em  Ergodic theory of
 random perturbations}, Birkh\"auser, Boston Basel, 1986.

  \bibitem[Ki88]{Ki88} Yu. Kifer, {\em  Random perturbations
  of dynamical systems}, Birkh\"auser, Boston Basel, 1988.

%\bibitem[Ki92]{Ki92}  Yu. Kifer, {\em Equilibrium states for random expanding
%transformations}, Random Comput. Dynamics {\bf1} (1992), 1--31.

%\bibitem[Ki74]{Ki74}  Yu. Kifer, {\em On small random perturbations of some
%smooth dynamical systems}. Math. USSR-Izv. {\bf8} (1974), 1083--1107.

\bibitem[KS69]{KS69} K. Krzyzewski, W. Szlenk, \emph{On invariant measures for
expanding differentiable mappings}.
Stud. Math. \textbf{33} (1969), 83--92.


%\bibitem[L01]{L01} P.-D. Liu, {\em Dynamics of random transformations: smooth
%Ergodic Theory}, Ergod. Th. $\&$ Dynam. Sys. {\bf21} (2001), 1279-1319.

\bibitem[LQ95]{LQ95} P.-D. Liu, M. Qian, {\em Smooth Ergodic Theory of Random
Dynamical Systems}, Springer Verlag, Heidelberg,
1995.


%\bibitem[LY]{L[88]
%Y} A. Lasota and J.A. Yorke,
%{\em On the existence of invariant measures for piecewise
%monotonic maps}, Trans. Amer. Math. Soc. {\bf 186} (1973),
%481-488.

%\bibitem[Ma87]{Ma87} R. Ma\~n\'e, \emph{Ergodic theory and
%differentiable dynamics}, Springer Verlag, 1987.

\bibitem[Me00]{Me00} R. J. Metzger, {\em Stochastic stability for contracting
Lorenz maps and flows}, Commun. Math. Phys. {\bf212} (2000), 277--296.

\bibitem[Oh83]{O83} T. Ohno, {\em  Asymptotic behaviors of dynamical systems with random parameters}, Publ. RIMS Kyoto Univ.
{\bf 19} (1983), 83--98.

%\bibitem[Ol]{Ol} K. Oliveira.
%{\em Equilibrium states for non-uniformly hyperbolic maps}. Ergodic Theory \&
%Dynam. Syst. To appear.%
%\printindex[2]
%

\bibitem[Pa00]{Pa00} J. Palis, {\em A global view of dynamics and a conjecture
on the denseness of finitude of attractors}. Astérisque {\bf261} (2000),
335--347.



%\bibitem[Pl72]{Pl72}
%V.~Pliss,
%{\em On a conjecture due to {S}male},  Diff. Uravnenija {\bf8} (1972),
%{262--268}.

\bibitem[Ru76]{R76} D. Ruelle, {\em A measure associated with
Axiom A attractors}, Amer. Jour. Math. {\bf98} (1976), 619--654.

%\bibitem[Si68]{Si68} Y. Sinai, \emph{Markov partitions and
%U-diffeomorphisms}, Funkcional Anal.  i Prilozen  {\bf 32}
%(1968), n. 2, 70--80.


\bibitem[Si72]{Si72} Y. Sinai, {\em Gibbs measures in ergodic
theory}, Russ. Math. Surv. {\bf 27} (1972), n. 4, 21--69.

%\bibitem[T92]{T92} P. Thieullen,  {\em Fibers dynamiques. Entropie et
%dimension.} Annales de l'institut Henri
%Poincaré - Analyse non linéaire {\bf9}  (1992), n. 2, 119--146.

%\bibitem[Ts92]{Ts92} M. Tsujii, {\em Small random perturbations of
%one-dimensional dynamical systems and Margulis-Pesin entropy formula},
%Random Comput. Dynam. {\bf1} (1992/93), 59--89.

%\bibitem[Va]{Va} C. Vazquez, \emph{Gibbs cu-states and SRB measures}, Thesis IMPA, 2003.

%\bibitem[Vi1]{V0} M. Viana, {\em Strange attractors in higher dimensions}, Bull. Braz. Math. Soc. {\bf 24}
%(1993), 13-62.
%
\bibitem[Vi97]{V97} M. Viana, {\em Multidimensional
non-hyperbolic attractors}, Publ. Math. IHES {\bf 85} (1997),
63--96.
%
%\bibitem[V97b]{V97b} \bysame, {\em Stochastic dynamics of
%deterministic systems}, Lect. Notes XXI Braz. Math Colloq., IMPA,
%1997.
%
%\bibitem[W82]{W82} P. Walters, {\em An introduction to ergodic
%theory}, Springer-Verlag, 1982.
%
\bibitem[Yo86]{Y86} L.-S. Young, {\em Stochastic stability of
hyperbolic attractors}, Erg. Th. \& Dyn. Sys. {\bf 6} (1986), 311-319.


\bibitem[Yo98]{Y98} \bysame, {\em Statistical properties of dynamical
systems with some hyperbolicity}, Ann.  Math.  {\bf 147} (1998),
585--650.

\bibitem[Yo99]{Y99} \bysame, {\em Recurrence times and rates of mixing},
Israel J. Math {\bf 110} (1999), 153--188.

\bibitem[Yo02]{Y02} \bysame, {\em What are SRB measures, and which
dynamical systems have them?}, J. Stat. Phys. {\bf 108} (2002), Issue 5,
733--754.



% [158] L.-S. Young. Ergodic theory of chaotic dynamical systems. XIIth Int.
% Congress of Mathematical Physics,
%       ICMP'97, Brisbane. International Press, Cambridge, MA, 1999, pp.
%131--143.


\end{thebibliography}

%\printindex

\end{document}